\numberwithin{equation}{section}
\def\thesistitle{Duality for Arithmetic $p$-adic Pro-\'etale Cohomology of Analytic Spaces}
\def\thesisauthorfirst{Zhenghui Li}
\title{\thesistitle}
\author{\thesisauthorfirst\space}
\newcommand{\Addresses}{{
  \bigskip
  \footnotesize

\textsc{IMJ-PRG, Sorbonne Universit\'e, 4 place Jussieu, 75005 Paris, France}\par\nopagebreak
  \textit{E-mail address}: \texttt{zhenghuili@imj-prg.fr}\par\nopagebreak

}}
\newtheorem{theorem}{Theorem}[section]
\newtheorem{corollary}[theorem]{Corollary}
\newtheorem{lemma}[theorem]{Lemma}
\newtheorem{proposition}[theorem]{Proposition}
\theoremstyle{definition}
\newtheorem{example}[theorem]{Examples}
\newtheorem{definition}[theorem]{Definition}
\newtheorem{notation}[theorem]{Notation}
\newtheorem{remark}[theorem]{Remark}
\def\bN{{\mathbb N}}
\def\bZ{{\mathbb Z}}
\def\bQ{{\mathbb Q}}
\def\bC{{\mathbb C}}
\def\bF{{\mathbb F}}
\def\cO{{\mathcal O}}
\def\cV{{\mathcal V}}
\def\cW{{\mathcal W}}
\def\bD{{\mathbb D}}
\def\cM{{\mathcal M}}
\def\bB{{\mathbb B}}
\def\#{{\sharp}}
\def\cE{{\mathcal{E}}}
\def\cF{{\mathcal{F}}}
\def\bA{{\mathbb{A}}}
\def\cG{{\mathcal{G}}}
\def\sD{{\mathscr{D}}}
\def\sC{{\mathscr{C}}}
\def\sF{{\mathscr{F}}}
\def\sH{{\mathscr{H}}}
\def\cB{{\mathcal{B}}}
\DeclareMathOperator{\proet}{pro\acute{e}t}
\DeclareMathOperator{\colim}{colim}
\DeclareMathOperator{\HK}{HK}
\DeclareMathOperator{\dR}{dR}
\DeclareMathOperator{\st}{st}
\DeclareMathOperator{\syn}{syn}
\DeclareMathOperator{\Hom}{Hom}
\DeclareMathOperator{\Mod}{Mod}
\DeclareMathOperator{\Spa}{Spa}
\DeclareMathOperator{\im}{Im}
\DeclareMathOperator{\coim}{Coim}
\DeclareMathOperator{\coker}{coker}
\DeclareMathOperator{\qcoh}{QCoh}
\DeclareMathOperator{\perf}{Perf}
\DeclareMathOperator{\nuc}{Nuc}
\DeclareMathOperator{\solid}{Solid}
\DeclareMathOperator{\cond}{Cond}
\DeclareMathOperator{\Ext}{Ext}
\DeclareMathOperator{\FF}{FF}
\DeclareMathOperator{\FFalg}{FF^{alg}}
\DeclareMathOperator{\alg}{alg}
\DeclareMathOperator{\cont}{cont}
\DeclareMathOperator{\crys}{crys}
\DeclareMathOperator{\tr}{tr}
\DeclareMathOperator{\Ab}{Ab}
\newcommand{\un}[1]{\underline{#1}}
\begin{document}

\begin{abstract}
	Let $K$ be a finite extension of $\bQ_p$. We prove that the  arithmetic $p$-adic pro-\'etale cohomology of smooth partially  proper spaces over $K$ satisfies a duality, as conjectured by Colmez, Gilles and Nizio{\l} in \cite{curves}. We derive it from the geometric duality on the Fargues-Fontaine curve from \cite{geometricdual} by Galois descent techniques of Fontaine.
\end{abstract}
\maketitle
 
\tableofcontents

\section{Introduction}

Let $K$ be a finite extension of $\bQ_p$. In \cite{curves}, the authors proved the duality for arithmetic $p$-adic pro-\'etale cohomology of a smooth Stein curve and formulated the following conjecture. This paper is devoted to proving this conjecture.

Let $X$ be a smooth partially proper variety over $K$, geometrically irreducible of dimension $d$. The arithmetic trace map\footnote{The compactly supported cohomology we use here is defined in \cite{achinger2025compactlysupportedpadicproetale}. The trace map is the geometric trace  (\cite[Sec. 7.3]{achinger2025compactlysupportedpadicproetale}) composed with Galois trace map.} 
	\begin{equation}\label{formulatrace}
	    \tr: H^{2d+2}_{\proet,c}(X,\bQ_p(d+1))\longrightarrow \bQ_p
	\end{equation}
	yields a pairing in $D(\bQ_{p,\square})$
	\begin{equation}\label{pairingforarithmetic}
	R\Gamma_{\proet}(X,\bQ_p(j))\otimes^{L_\square}_{\bQ_p} R\Gamma_{\proet,c}(X,\bQ_p(d+1-j))[2d+2]\longrightarrow \bQ_p.\end{equation}

\begin{theorem}[{\cite[Conjecture 1.6]{curves}}]\label{conjecturemain} $ $
	\begin{enumerate}
	\item The pairing \eqref{pairingforarithmetic} induces a duality quasi-isomorphism in $D(\bQ_{p,\square})$
		\begin{equation} \label{formuladesired1}
		\gamma_{\proet}: R\Gamma_{\proet}(X,\bQ_p(j))\stackrel{\simeq}\longrightarrow \bD_{\bQ_p}(R\Gamma_{\proet,c}(X,\bQ_p(d+1-j))[2d+2]),
		\end{equation}
		where $\bD_{\bQ_p}(-):=R\Hom_{\bQ_p}(-,\bQ_p)$ is the (derived) dual in $D(\bQ_{p,\square})$.

		\item Assume further that $X$ is a Stein space. Then the cohomology groups $H^i_{\proet}(X,\bQ_p(j))$ and $H^i_{\proet,c}(X,\bQ_p(j))$ are nuclear Fr\'echet and of compact type, respectively. Moreover, the pairing \eqref{pairingforarithmetic} induces isomorphism of solid $\bQ_p$-vector spaces
			\begin{equation*} 
		H^i_{\proet}(X,\bQ_p(j))\simeq H^{2d+2-i}_{\proet,c}(X,\bQ_p(d+1-j))^*,
		\end{equation*}
		\begin{equation*}  
		H^i_{\proet,c}(X,\bQ_p(j))\simeq H^{2d+2-i}_{\proet}(X,\bQ_p(d+1-j))^*.
		\end{equation*}
		
	\end{enumerate}
\end{theorem}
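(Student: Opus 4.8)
The plan is to deduce Theorem~\ref{conjecturemain} from the geometric duality over $C:=\widehat{\overline K}$ proved in \cite{geometricdual}, descended along $G_K:=\Gal(\overline K/K)$ by Fontaine's computations of the Galois cohomology of period rings. The starting point is Galois descent (Hochschild--Serre): in $D(\bQ_{p,\square})$ one has
\begin{align*}
R\Gamma_{\proet}(X,\bQ_p(j)) &\simeq R\Gamma(G_K,\, R\Gamma_{\proet}(X_C,\bQ_p(j))),\\
R\Gamma_{\proet,c}(X,\bQ_p(j)) &\simeq R\Gamma(G_K,\, R\Gamma_{\proet,c}(X_C,\bQ_p(j))),
\end{align*}
where $R\Gamma(G_K,-)$ denotes continuous (solid) group cohomology; these identifications are multiplicative for cup products, and under them the arithmetic trace \eqref{formulatrace} is $R\Gamma(G_K,-)$ applied to the geometric trace and then post-composed with the Tate trace $H^2(G_K,\bQ_p(1))\xrightarrow{\ \sim\ }\bQ_p$, as prescribed by the footnote to \eqref{formulatrace}.

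I would then feed in two dualities. First, the geometric duality of \cite{geometricdual}, which is $G_K$-equivariant and induced by the geometric trace: $R\Gamma_{\proet}(X_C,\bQ_p(j))\xrightarrow{\ \sim\ }\bD_{\bQ_p}\!\big(R\Gamma_{\proet,c}(X_C,\bQ_p(d-j))[2d]\big)$. Second, local Tate--Poincar\'e duality for $G_K$ in the derived solid category, induced by the Tate trace: $R\Gamma(G_K,\bD_{\bQ_p}(N))\simeq \bD_{\bQ_p}\!\big(R\Gamma(G_K,N(1))[2]\big)$, to be established for the relevant coefficient complexes $N$. Applying $R\Gamma(G_K,-)$ to the geometric duality, then local duality with $N=R\Gamma_{\proet,c}(X_C,\bQ_p(d-j))[2d]$ (so $N(1)=R\Gamma_{\proet,c}(X_C,\bQ_p(d+1-j))[2d]$), and finally Galois descent once more on the compactly supported side, produces precisely the map $\gamma_{\proet}$ of \eqref{formuladesired1}: the Tate twist and degree shift transform the geometric normalisation $(d,2d)$ into the arithmetic $(d+1,2d+2)$. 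That this composite coincides with the map induced by the pairing \eqref{pairingforarithmetic} follows from multiplicativity of Hochschild--Serre together with the factorisation of the arithmetic trace recorded above.

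The crux is to prove the local duality isomorphism for $N=R\Gamma_{\proet,c}(X_C,\bQ_p(\ast))$, which is not a perfect $\bQ_p$-complex; this is where Fontaine's techniques enter. I would argue through the syntomic (fundamental) description of $p$-adic pro-\'etale cohomology: $R\Gamma_{\proet}(X_C,\bQ_p(r))$, and its compactly supported variant, sit in functorial exact triangles built from Hyodo--Kato cohomology $R\Gamma_{\HK}(X)$ --- complexes of finite $(\varphi,N,G_K)$-modules over $\breve{K}$, which are defined over $K$ --- and from the filtered de Rham cohomology $R\Gamma_{\dR}(X)$ over $K$. Galois descent for these constituents is formal, so the only genuinely new inputs are the computation of $R\Gamma(G_K,C(i))$ (Tate--Sen: concentrated in degrees $0$ and $1$, one-dimensional over $K$ there for $i=0$ and acyclic for $i\neq 0$) and ordinary local Tate duality for finite $(\varphi,N,G_K)$-modules. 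Assembling these along the syntomic triangles, and checking that every comparison triangle is compatible with the duality pairings, yields the required local duality in $D(\bQ_{p,\square})$. This bookkeeping --- the solid refinement of Fontaine's Galois-cohomology computations and its compatibility with the geometric pairing --- is the step I expect to be the main obstacle.

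For part~(2), suppose $X$ is Stein, so $H^i_{\proet}(X_C,\bQ_p(j))$ is nuclear Fr\'echet and $H^i_{\proet,c}(X_C,\bQ_p(j))$ is of compact type by \cite{curves} and \cite{geometricdual}. Since $G_K$ has cohomological dimension $2$ and its continuous cohomology preserves each of these classes (extensions within them being strict, and $H^\ast(G_K,-)$ of a finite-dimensional representation being finite-dimensional), the Hochschild--Serre spectral sequence shows $H^i_{\proet}(X,\bQ_p(j))$ is nuclear Fr\'echet and $H^i_{\proet,c}(X,\bQ_p(j))$ is of compact type. Reflexivity of these classes then upgrades the quasi-isomorphism of part~(1) to the stated isomorphisms of solid $\bQ_p$-vector spaces, once one verifies --- again via Hochschild--Serre and the vanishing of the pertinent higher $\Ext$'s between nuclear Fr\'echet and compact type solid $\bQ_p$-vector spaces --- that $\gamma_{\proet}$ induces an isomorphism on each cohomology group. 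Apart from the local duality of the previous paragraph, the argument is formal given the inputs from \cite{geometricdual}.
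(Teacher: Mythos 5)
Your overall architecture (geometric duality from \cite{geometricdual}, then Galois descent via a ``local Tate duality'' for the coefficients, proved by syntomic d\'evissage into Hyodo--Kato and de Rham constituents) is the same as the paper's, and your sketch of part (2) via Hochschild--Serre is also the right starting point. But there is a genuine gap in the pivotal step, namely in how you formulate the local duality. You propose $R\Gamma(G_K,\bD_{\bQ_p}(N))\simeq \bD_{\bQ_p}(R\Gamma(G_K,N(1))[2])$ with $\bD_{\bQ_p}$ the solid $\bQ_p$-linear dual of the geometric cohomology $N$. For the relevant $N$ this is the wrong dual: the Hyodo--Kato constituents of $R\Gamma_{\proet,c}(X_C,\bQ_p(r))$ are not finite $(\varphi,N,G_K)$-modules but Banach--Colmez spaces $(H^i_{\HK,c}\otimes \hat B^+_{\st})^{N=0,\varphi=p^r}$, and the de Rham constituents are of the form $V\otimes_K C(i)$ and $V\otimes_K B^+_{\dR}/t^k$ with $V$ infinite-dimensional. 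Their solid $\bQ_p$-linear duals are unwieldy objects whose Galois cohomology is not accessible by Tate--Sen or by ``ordinary local Tate duality for finite $(\varphi,N,G_K)$-modules''; already for $N=C$ you would need $R\Gamma(G_K,\underline{\Hom}_{\bQ_p}(C,\bQ_p))=0$, which none of your stated inputs provides. The paper avoids this by never dualizing over $\bQ_p$ on the geometric side: the inner dual is taken at the level of $G_K$-equivariant quasi-coherent sheaves on the Fargues--Fontaine curve, $\bD_{\FF}(-)=R\Hom_{\FF}(-,\cO_{\FF}\otimes\bQ_p(1)[2])$, and the statement actually proved (Theorem \ref{theoremlocaltatesheaf1}) is $R\Gamma(G_K,\bD_{\FF}(\cE_{\syn,c}))\simeq \bD_{\bQ_p}(R\Gamma(G_K,R\Gamma(\FF,\cE_{\syn,c}))[2])$, i.e.\ the $\bQ_p$-dual appears only after taking global sections and Galois cohomology.

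Concretely, the missing inputs your plan would need are: (i) for the Hyodo--Kato part, Fontaine's equivalence between effective coherent $\cO_{\FF^{\alg}}[G_K]$-modules and almost $\bC_p$-representations (Theorem \ref{theoremfontaineequivalence}) together with his duality $\Ext^n_{\sC(G_K)}(X,Y)\times\Ext^{2-n}_{\sC(G_K)}(Y,X(1))\to\bQ_p$ for almost $\bC_p$-representations --- this, not classical local Tate duality, is what dualizes the Banach--Colmez coefficients, and it only matches Galois cohomology of the \emph{sheaf-level} dual; (ii) for the de Rham part, an explicit computation of $R\Hom$ over the period ring (e.g.\ $R\Hom_{B'}(V\otimes_K B'/t^k,B')\simeq V^*\otimes_K(t^{-k}B'/B')[-1]$) combined with the generalized Tate formula. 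Relatedly, for part (2) the assertion that $R\Gamma(G_K,-)$ ``preserves'' nuclear Fr\'echet and compact type is not formal: one must show the relevant images of differentials are closed (the paper does this via Serre duality on a Stein exhaustion), that the Galois cohomology of the Banach--Colmez pieces is finite-dimensional (Fontaine again), and that transition maps in the exhaustion are compact. None of these steps is routine bookkeeping, and as written your reduction to ``Tate--Sen plus local Tate duality for finite modules'' would not close.
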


The proof is different from the case of curves (cf. \cite{curves}): In the one-dimensional case, it is reduced to wide opens then deduced from an explicit computation on the 'ghost circle'. In higher dimensional case, this, of course, would not work. Instead, we deduce the arithmetic duality by a Galois descent from the geometric duality on the Fargues-Fontaine curve from \cite{geometricdual}. The Galois descent is accomplished via Fontaine's theory of almost $\bC_p$-representations and its relationship to $G_K$-equivariant coherent sheaves on the Fargues-Fontaine curve \cite{fontaine2019almost}. 

\subsection*{Outline of the proof} 

For the topological properties of the cohomology groups of Stein spaces,  we use the Hochschild-Serre spectral sequence
$$E_2^{a,b}=H^a(G_K,H^b_{\proet,*}(X_C,\bQ_p(j)))\Rightarrow H^{a+b}_{\proet,*}(X,\bQ_p(j))$$
where $*\in \{\emptyset,c\}$. We study this spectral sequence via applying syntomic comparison theorems. The spectral sequence degenerates on the third page and, after choosing a covering by naive interiors, we show that most of the terms are finite dimensional $\bQ_p$-vector spaces and the other terms admit the desired property.\\

	The proof of the quasi-isomorphism \eqref{formuladesired1} uses the result of \cite{geometricdual}. There, they lift syntomic comparison theorem to quasi-coherent sheaves on the Fargues-Fontaine curve. Based on the duality of de Rham and Hyodo-Kato cohomology for Stein spaces, they proved the "Poincare duality for syntomic complex on the Fargues-Fontaine curve":
	
	\begin{theorem}[{\cite[Theorem 4.12]{geometricdual}}]  
 	Let $X$ be a smooth connected Stein space over $K$ of dimension $d$. Assume $r,r'\geq 2d$ and $s=r+r'-d$. Then there is a natural quasi-isomorphism of complexes of $G_K$-equivariant sheaves
 	\begin{equation}
 		\cE_{\proet}(X_C,\bQ_p(r))\cong R\sH om_{\FF}(\cE_{\proet,c}(X_C,\bQ_p(r'))[2d],\cO\otimes \bQ_p(s)).
 	\end{equation}
 \end{theorem}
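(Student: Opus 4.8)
The plan is to reduce the asserted duality of syntomic sheaves on the Fargues--Fontaine curve $\FF$ to the Poincar\'e dualities for the Hyodo--Kato and de Rham cohomologies of the Stein space $X$, which are the genuinely analytic inputs. Recall that $\cE_{\proet}(X_C,\bQ_p(r))$ is built on $\FF$ as the homotopy fibre of a syntomic-descent map
\[
\cE_{\proet}(X_C,\bQ_p(r))\ \simeq\ \big[\,\cE_{\HK}(X_C)\langle r\rangle\ \xrightarrow{\ \iota_{\dR}\ }\ \cE_{\dR}(X_C)/F^{r}\,\big],
\]
where $\cE_{\HK}(X_C)\langle r\rangle$ is the complex of $G_K$-equivariant bundles on $\FF$ attached to the $(\varphi,N)$-module $R\Gamma_{\HK}(X_C)$ over $\breve\bQ_p$, Frobenius-twisted so that the pertinent eigenvalue condition reads $\varphi=p^{r}$; $\cE_{\dR}(X_C)/F^{r}$ is the sheaf supported at $\infty\in\FF$ attached to $R\Gamma_{\dR}(X_C)/F^{r}$; and $\iota_{\dR}$ is induced by the Hyodo--Kato map. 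Likewise $\cE_{\proet,c}(X_C,\bQ_p(r'))$ is the homotopy fibre of the analogous map built from $R\Gamma_{\HK,c}(X_C)$ and $R\Gamma_{\dR,c}(X_C)$. It therefore suffices to dualize the two building blocks and then check that $R\sH om_{\FF}(-,\cO\otimes\bQ_p(s))$ carries the homotopy fibre defining $\cE_{\proet,c}(X_C,\bQ_p(r'))[2d]$ onto the one defining $\cE_{\proet}(X_C,\bQ_p(r))$ when $r=s-r'+d$.

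For the building blocks: Hyodo--Kato Poincar\'e duality for $X$ furnishes a $G_K$- and $N$-equivariant pairing $R\Gamma_{\HK}(X_C)\otimes^{L}R\Gamma_{\HK,c}(X_C)[2d]\to\breve\bQ_p$ that is $\varphi$-equivariant after twisting Frobenius by $p^{d}$, and de Rham Poincar\'e duality furnishes a $G_K$-equivariant filtered pairing $R\Gamma_{\dR}(X)\otimes^{L}R\Gamma_{\dR,c}(X)[2d]\to K$ with the Hodge filtration shifted by $d$. Applying the exact functor $M\mapsto\cE(M)$ from $(\varphi,N)$-modules to $G_K$-equivariant bundles on $\FF$, together with its analogue at $\infty$ for filtered $B_{\dR}^{+}$-modules, converts these into quasi-isomorphisms $\cE_{\HK}(X_C)\langle r\rangle\cong R\sH om_{\FF}(\cE_{\HK,c}(X_C)\langle r'\rangle[2d],\cO\otimes\bQ_p(s))$ and, up to the shift produced by restriction to the closed point $\infty$ (absorbed by the homotopy-fibre structure), $\cE_{\dR}(X_C)/F^{r}$ dual to $\cE_{\dR,c}(X_C)/F^{r'}[2d]$, provided $r+r'-d=s$. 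Here the hypotheses $r,r'\ge 2d$ enter decisively: the slopes of $\cE_{\HK}(X_C)$ in cohomological degree $i\le 2d$ lie in $[0,i]$, so those of $\cE_{\HK}(X_C)^{\vee}\langle r\rangle(s)$ lie in $[s-i,s]$ with $s\ge 3d\ge i\ge i-1$; thus every bundle occurring after the $\cO\otimes\bQ_p(s)$-twist is semistable of slope $\ge -1$, its higher cohomology on $\FF$ vanishes, and each $R\sH om_{\FF}$ is computed term by term without higher corrections (and similarly on the compactly supported side).

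It remains to assemble. Since $R\sH om_{\FF}(-,\cO\otimes\bQ_p(s))$ is triangulated and, on the subcategory of complexes with semistable cohomology of slope $\ge -1$, exact, it sends the distinguished triangle defining $\cE_{\proet,c}(X_C,\bQ_p(r'))[2d]$ to a distinguished triangle; identifying its three terms with those of the triangle defining $\cE_{\proet}(X_C,\bQ_p(r))$ through the quasi-isomorphisms above --- one checks that the Frobenius condition $\varphi=p^{r'}$ on the Hyodo--Kato part dualizes, after the $\cO(s)$-twist, to $\varphi=p^{r}$, and that $(-)/F^{r'}$ dualizes to $(-)/F^{r}$ because the de Rham graded pieces of $X$ and of its compact support match under the degree shift by $d$ --- produces the desired quasi-isomorphism $\cE_{\proet}(X_C,\bQ_p(r))\cong R\sH om_{\FF}(\cE_{\proet,c}(X_C,\bQ_p(r'))[2d],\cO\otimes\bQ_p(s))$. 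The $G_K$-equivariance is automatic: the Hyodo--Kato and de Rham trace maps are Galois-equivariant by construction, and the functors $\cE(-)$ and $R\sH om_{\FF}(-,\cO\otimes\bQ_p(s))$ are $G_K$-equivariant, so the assembled morphism is one of complexes of $G_K$-equivariant sheaves.

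The main obstacle, I expect, is to make the Hyodo--Kato and de Rham Poincar\'e dualities available for the \emph{non-proper} Stein space $X$, compatibly with $\varphi$, $N$, and the Hodge filtration, and --- crucially --- at the level of the full complexes of sheaves on $\FF$ rather than of cohomology groups alone, since the homotopy-fibre formalism forces one to keep track of everything at once. This means working throughout in the solid (or pro-/ind-Banach) framework: $R\Gamma_{\HK,c}$ and $R\Gamma_{\dR,c}$ are of compact type while $R\Gamma_{\HK}$ and $R\Gamma_{\dR}$ are nuclear Fr\'echet, so one must know that $R\sH om_{\FF}$ and $\otimes^{L}$ are the correctly derived solid functors and that the pairings are topologically perfect. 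The secondary technical point is the sharp slope analysis of $\cE_{\HK}(X_C)$ in each degree $\le 2d$ that explains why the bound $r,r'\ge 2d$ is exactly what the term-by-term computation of $R\sH om_{\FF}$ requires.
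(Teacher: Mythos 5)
This statement is not proved in the paper at all: it is quoted verbatim from \cite{geometricdual} (it reappears in the body as Theorem \ref{theoremgeometric}, again with only a citation), so there is no in-paper proof to compare yours against. What can be said is that your sketch follows exactly the strategy the introduction attributes to that reference --- write $\cE_{\syn,*}(X_C,\bQ_p(r))$ as the fibre of the Hyodo--Kato sheaf over the de Rham sheaf, dualize each building block using the Hyodo--Kato and de Rham Poincar\'e dualities for Stein spaces, and match the two fibre sequences, absorbing the degree shift coming from local duality at $\infty$. At the level of architecture this is the right reconstruction, and you correctly identify where the real work lies (establishing the non-proper HK/dR dualities compatibly with $\varphi$, $N$, $G_K$ and the filtration in the solid framework).

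Two concrete corrections to your slope discussion. First, the claim that a semistable bundle on $\FF$ of slope $\ge -1$ has vanishing higher cohomology is false: from $0\to\cO(-1)\to\cO\to i_{\infty*}C\to 0$ one gets $H^1(\FF,\cO(-1))\cong C/\bQ_p\neq 0$; the correct criterion is slope $\ge 0$. Your own estimate $s-i\ge 3d-2d=d\ge 0$ in fact delivers nonnegative slopes, so the intended conclusion survives, but the stated criterion is wrong. Second, the vanishing of $H^1(\FF,-)$ is not what makes the \emph{internal} $R\sH om_{\FF}$ computable term by term: for vector bundles $R\sH om_{\FF}(\cE,\cF)\simeq\cE^\vee\otimes\cF$ with no slope hypothesis whatsoever, and the genuine subtlety on the locally free part is rather that $\cE_{\HK}$ for the full Stein space is only a nuclear (co)limit of perfect complexes, so that dualization must be justified at that level. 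Where nonnegativity of slopes actually matters in this paper is later, in identifying $R\Gamma(\FF\!,\cE^i_{\HK,c})$ with $H^0$ (Remark \ref{remarkslope} and Proposition \ref{propositionlocaltateHKsheaf}); the hypotheses $r,r'\ge 2d$ are needed there and for the nonnegativity of the filtration jumps $r-i$, $r'-i$ in the truncated de Rham complexes, not for the existence of the internal Hom.
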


	 To descent this duality to $\bQ_p$-vector spaces, we prove the following, which we call "Local Tate duality for the syntomic sheaves"

\begin{theorem}[{Theorem \ref{theoremlocaltatesheaf1}}]
	Let $X$ be a smooth geometrically connected Stein space over $K$ and $r\geq 2d$. There is a natural quasi-isomorphism in $D(\bQ_{p,\square}):$
 	$$
 		R\Gamma(G_K,R\Hom_{\FF}(\cE_{\mathrm{\proet},c}(X_C,\bQ_p(r)),\cO_{\FF}\otimes \ \bQ_p(1)))\simeq \bD_{\bQ_p}(R\Gamma(G_K,R\Gamma^B_{\proet,c}(X_C,r)[2]))
 	$$
\end{theorem}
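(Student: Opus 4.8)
The plan is to recognize both sides as exact contravariant functors of the $G_K$-equivariant complex $\cF:=\cE_{\proet,c}(X_C,\bQ_p(r))$ on $\FF$, to produce the comparison map from a cup product and a trace, and then to reduce by dévissage to two base cases, each of which is a known local Tate duality. Unwinding the definition, $R\Gamma^B_{\proet,c}(X_C,r)=R\Gamma(\FF,\cF)$, so one must produce a natural quasi-isomorphism
\begin{equation*}
R\Gamma\bigl(G_K,R\Hom_{\FF}(\cF,\cO_{\FF}\otimes\bQ_p(1))\bigr)\;\xrightarrow{\ \simeq\ }\;\bD_{\bQ_p}\bigl(R\Gamma(G_K,R\Gamma(\FF,\cF))[2]\bigr).
\end{equation*}
Both sides are exact contravariant functors of $\cF\in D^b_{\mathrm{coh}}(\FF)^{G_K}$ into $D(\bQ_{p,\square})$, the right-hand one being well defined because $R\Gamma(\FF,-)$ of a coherent sheaf is a Banach--Colmez space whose continuous $G_K$-cohomology is computed by a bounded complex of solid $\bQ_p$-vector spaces. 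The comparison map is adjoint to the pairing obtained by composing evaluation $R\sH om_{\FF}(\cF,\cO_{\FF}\otimes\bQ_p(1))\otimes^{L}_{\cO_{\FF}}\cF\to\cO_{\FF}\otimes\bQ_p(1)$ with $R\Gamma(\FF,-)$ (using $H^0(\FF,\cO_{\FF})=\bQ_p$ and $H^1(\FF,\cO_{\FF})=0$, whence $R\Gamma(\FF,\cO_{\FF}\otimes\bQ_p(1))=\bQ_p(1)$), then $R\Gamma(G_K,-)$, and finally the local class field theory invariant $\tr\colon R\Gamma(G_K,\bQ_p(1))\to\bQ_p[-2]$.

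To carry out the dévissage, recall from \cite{geometricdual} that for $X$ Stein of dimension $d$ and $r\geq 2d$ the complex $\cF$ is a bounded complex of $G_K$-equivariant coherent sheaves on $\FF$ sitting in a distinguished triangle whose outer terms are, up to shift, (I) a $G_K$-equivariant vector bundle $\cE$ on $\FF$ built from the compactly supported Hyodo--Kato cohomology, and (II) $i_{\infty,*}W$ for $W$ a bounded complex of $C$-semilinear $G_K$-representations, coming from the compactly supported de Rham cohomology modulo $F^r$ base-changed to $C$ (so $W=V\widehat{\otimes}_KC$ for a $K$-vector space $V$ with trivial $G_K$-action, possibly of compact type). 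As both functors above are exact, it suffices to prove the quasi-isomorphism for $\cF$ of type (I) and of type (II), and to check in each case that the isomorphism obtained is the one induced by the map constructed above, so that the two cases glue along the connecting morphism of the triangle.

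For (I): the Fargues--Fontaine equivalence between $G_K$-equivariant vector bundles on $\FF$ and $B$-pairs (equivalently, $(\varphi,\Gamma)$-modules over the Robba ring) carries $R\Gamma(G_K,R\Gamma(\FF,\cE))$ to the Galois cohomology of the associated $B$-pair and $R\sH om_{\FF}(\cE,\cO_{\FF}\otimes\bQ_p(1))=\cE^{\vee}\otimes\bQ_p(1)$ to its Tate-twisted dual, so the statement becomes exactly the local Tate duality for $(\varphi,\Gamma)$-modules over the Robba ring of Liu and Nakamura, the pairing being the one above after matching trace maps. For (II): Grothendieck--Serre duality for the regular closed immersion $i_\infty\colon\Spec C\hookrightarrow\FF$ gives
\begin{equation*}
R\sH om_{\FF}\bigl(i_{\infty,*}W,\cO_{\FF}\otimes\bQ_p(1)\bigr)\;\simeq\;i_{\infty,*}\bigl(\Hom_C(W,C)\bigr)[-1],
\end{equation*}
the $\bQ_p(1)$-twist being cancelled by the $(-1)$-twist of the normal bundle of $\infty$ in $\FF$; applying $R\Gamma(\FF,-)$ and then $R\Gamma(G_K,-)$ reduces the claim to $R\Gamma(G_K,\Hom_C(W,C))\simeq\bD_{\bQ_p}(R\Gamma(G_K,W))[-1]$, a local Tate duality for $C$-representations. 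For $W=V\widehat{\otimes}_KC$ with trivial action on $V$ this unwinds via $R\Gamma(G_K,C)\simeq K\oplus K[-1]$ to $R\Gamma(G_K,W)\simeq V\oplus V[-1]$, and similarly $R\Gamma(G_K,W^\vee)\simeq V^*\oplus V^*[-1]$ with $V^*:=\Hom_K(V,K)$, so the statement amounts to the natural isomorphism $V^*\simeq\bD_{\bQ_p}(V)$ induced by the perfect trace form $K\otimes_{\bQ_p}K\to\bQ_p$; the general case follows by the same computation with the Sen module of $W$.

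The step I expect to be the main obstacle is the bookkeeping around the comparison morphism: one must verify that the single cup-product-and-trace pairing built on $\FF$ and over $G_K$ restricts, in the two base cases, to the pairings underlying the Liu--Nakamura and the Sen dualities. Since those are themselves assembled from cup products and residue or trace maps, this is a compatibility of trace maps --- the residue at $\infty$, the Tate trace $H^1(G_K,C)=K\to\bQ_p$, and the invariant $H^2(G_K,\bQ_p(1))=\bQ_p$ --- rather than a genuinely new input, but it must be done carefully. Secondary points are keeping the degree shifts and Tate twists consistent (the $[-1]$ from coherent duality at $\infty$ against the $[-2]$ from $H^2(G_K,\bQ_p(1))$, and the normal-bundle twist cancelling the $\bQ_p(1)$), and making sure the $C$-representation duality is available in the compact-type setting occurring in the de Rham contribution, not merely for finite-dimensional representations.
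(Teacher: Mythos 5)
Your overall skeleton coincides with the paper's: the pairing is built from the same evaluation-plus-trace construction, and the dévissage is exactly the paper's fiber sequence $\cE_{\HK,c}\to\cE_{\dR,c}\to\cE_{\syn,c}[1]$, with the Hyodo--Kato part treated as a duality for equivariant vector bundles and the de Rham part as a duality for skyscrapers at $\infty$. Where you genuinely diverge is the input for the vector-bundle case: you invoke Liu--Nakamura local duality for $(\varphi,\Gamma)$-modules over the Robba ring via the $B$-pair dictionary, whereas the paper passes through GAGA to the algebraic curve and uses Fontaine's equivalence between coherent $\cO_{\FF^{\alg}}[G_K]$-modules and almost $\bC_p$-representations together with his duality for $\Ext^\bullet_{\sC(G_K)}$. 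Both are legitimate; the paper's choice has the advantage that the same category $\sC(G_K)$ and the same extension-group duality $\Ext^l_{\sC(G_K)}(\bC_p,\bQ_p(1))\cong\Ext^{2-l}_{\sC(G_K)}(\bQ_p,\bC_p)^\vee$ reappear verbatim in the de Rham/skyscraper computation, so the trace compatibilities you flag as the main obstacle are checked once in a single framework rather than matched across two different duality theories.

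There is, however, a concrete gap: your premise that $\cF=\cE_{\syn,c}(X_C,\bQ_p(r))$ lies in $D^b_{\mathrm{coh}}(\FF)^{G_K}$ is false for a Stein space. The compactly supported Hyodo--Kato cohomology of $X$ is only an LB-colimit of finite-dimensional spaces over a covering $\{X_n\}$ by naive interiors, so $\cE_{\HK,c}(X_C,r)=\colim_n\cE_{\HK,c}(X_{n,C},r)$ is a colimit of vector bundles, not a vector bundle, and neither Liu--Nakamura nor Fontaine's duality applies to it directly. You must prove the duality for each $X_n$ and then pass to the limit, which requires knowing that $R\Gamma(G_K,\bD_{\FF}(-))$ carries this colimit to a derived limit and that $\bD_{\bQ_p}$ of the colimit of the global sections is the limit of the duals (the paper does this, using finite-dimensionality of the Galois cohomology at each finite level); this step is absent from your argument. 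A second, smaller imprecision: the de Rham skyscraper is not $i_{\infty,*}$ of a complex of $C$-representations but of $B_{\dR}^+/t^k$-modules for varying $k$ (the cohomology of $DR_c(X,r)$ mixes terms $H^i_{\dR,c}(X)\otimes_K B_{\dR}^+/t^{s-i}$ with terms $(H^d_c(X,\Omega^{i-d})/\im d)\otimes_K C$), so the coherent duality at $\infty$ produces $t^{-k}B'/B'[-1]$ rather than $C[-1]$ and one must isolate the single graded piece $C(0)$ surviving Galois cohomology; your appeal to ``the Sen module of $W$'' gestures at this but does not carry it out, and this is precisely where the finite-dimensional versus compact-type distinction (your Lemma-\ref{lemmacompacttypedual}-type input $R\Hom_K(V,K)\simeq V^*$) must be used.
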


The proof of this theorem uses syntomic comparison theorem and is separated into the Hyodo-Kato and de Rham parts: For the Hyodo-Kato part, we deduce it from Fontaine's theory of almost $\bC_p$-representations due to the fact that locally Hyodo-Kato cohomology is of finite rank. For the de Rham part, the Galois action is simple although the cohomology groups are huge, we prove it via an explicit computation. 

  The duality \eqref{formuladesired1} will follow from the above two theorems. Combining it with reflexivity of cohomology groups, we deduce the remaining results for Stein spaces.

\begin{remark} In the algebraic setting, arithmetic dualities are derived, via Galois descent, from geometric dualities. In the analytic setting treated here however, it was the arithmetic dualities that were investigated first by Colmez-Gille-Nizio{\l} in their initial computations. This is because they were expected to be Poincar\'e dualities hence amenable to explicit calculations  that seemed  easier than the expected calculations for Verdier dualities in the geometric setting. And, in fact, the first proof of analytic arithmetic dualities (for analytic curves) was done in \cite{curves} in a direct way, without descending from geometric dualities. 

  Our proof here does descend from geometric dualities but from those on the Fargues-Fontaine curve. This seemed to us technically simpler than descending from the geometric dualities formulated in Topological Vector Spaces.
\end{remark}

\begin{remark}[Related Work] This paper was researched and written before the paper \cite{anschutz20246functorformalismsolidquasicoherent} of  Ansch\"utz, Le Bras and Mann 
appeared. In Remark 6.2.3 in loc. cit. the authors state that the derived part  of our arithmetic duality theorem, i.e., the quasi-isomorphism  \eqref{formuladesired1}, can be obtained  as a byproduct of their work on six functor formalism for solid quasi-coherent sheaves on the Fargues-Fontaine curve.
\end{remark}

\subsection*{Notations and conventions}

 We will follow the convention in \cite{curves} and work with condensed mathematics: The group cohomology appearing below are the condensed group cohomology (cf. \cite[Sec. 3.1]{curves}). The condensed pro-\'etale cohomology of a rigid analytic variety is defined in \cite[Definition 4.1]{curves} and it coincides with the condensed group associated with the classical pro-\'etale cohomology group (cf. \cite[Lemma 4.2]{curves}). We call a condensed set classical if it is the condensed set associated with a topological space.\par 
  Let $K$ be a finite extension of $\bQ_p$ with valuation ring $\cO_K$, we denote by $G_K$ its absolute Galois group. Choose an algebraic closure $\overline{\bQ}_p$ of $\bQ_p$ and let $C:=\bC_p=\widehat{\overline{\bQ_p}}$, $\check{C}:=W(\overline{\bF}_p)[1/p]$. Note that there is an isomorphism of one dimensional $K$-vector spaces $\Hom_K(K,K)\cong\Hom_{\bQ_p}(K,\bQ_p)$ induced by the trace map $Tr_{K/\bQ_p}$ and we use this identification without further mention. Partially proper rigid analytic spaces will be assumed to be separated and countable at infinity\footnote{It means the space $X$ can be written as a countable increasing union $X=\cup_n X_n$ such that each $X_n$ is a quasi-compact open subspace of $X$ and the inclusion $X_n\subseteq X_{n+1}$ factors through the adic compactification of $X_n$.}.
  
 \subsection*{Acknowledgment}
 I would like to heartily thank Prof. Wieslawa Niziol, my Ph.D supervisor, for suggesting me to solve the problem, many helpful conversations and careful reading of the draft of the paper. The idea of the proof is inspired by her talk in Rapoport's 75th birthday conference. I thank Guido Bosco, Pierre Colmez, Gabriel Dospinescu, Sally Gilles, Long Liu, Lucas Mann, Xinyu Shao, Zhixiang Wu and Yicheng Zhou for helpful discussions.\par 
  The paper is written when the author is in his a Ph.D program in Sorbonne Universit\'e. This program has received funding from the European Union's Horizon 2020 research and innovation programme under the Marie Skłodowska-Curie grant agreement No 945332 and partially funded by the project “Group schemes, root systems, and related representations” founded by the European Union - NextGenerationEU through Romania’s National Recovery and Resilience Plan (PNRR) call no. PNRR-III-C9-2023- I8, Project CF159/31.07.2023.

\section{Review of Functional Analysis}

In this section, we review some results in classical and condensed functional analysis and fix some terminologies and notations.

\subsection{Classical $p$-adic functional analysis}
 We recall some facts in $p$-adic functional analysis and refer to \cite[Sec. 2]{colmez2020cohomology}\cite[Sec. 1]{schneider2002locally} for details.\par 
Let $K$ be a finite extension of $\bQ_p$ with valuation ring $\cO_K$ and $C_K$ be the category of locally convex $K$-vector spaces, which is a quasi-abelian category \cite{prosmans2000derived}. We denote the category of left bounded complex by $C(C_K)$ and the associated derived $\infty$-category by $\sD(C_K)$. For an object $E\in C(C_K)$, the associated cohomology object is 
$$\widetilde{H}^n(E):=\tau_{\leq n}\tau_{\geq n}(E)=(\mathrm{coim}(d_{n-1})\to \ker(d_n)).$$
The cohomology group  $H^n(E)$ is taken in the category of $K$-vector spaces and, if necessary, equipped with sub-quotient topology. We call $\widetilde{H}^n(E)$ classical if the canonical map $\widetilde{H}^n(E)\to H^n(E)$ is an isomorphism.  \par 

  We denote the category of Hausdorff locally convex $K$-vector spaces by $C_K^H$. It is closed under direct sums and direct products. If $V\subseteq W$ are two locally convex $K$-vector spaces then $V/W\in C_K^H$ if and only if $V$ is closed in $W$. $C_K^H$ is quasi-abelian: cokernal and image are defined by closure of images in $C_K$.\\ \par 
  If $V,W\in C_K$, we donote by $L(V,W)$ the space of continuous linear maps from $V$ to $W$ and write $L_s(V,W), L_b(V,W)$ the space $L(V,W)$ equipped with weak and strong topology respectively. For $V\in C_K$, we write $V'_s=L_s(V,K)$ and $V^*:=V'_b=L_b(V,K)$. All of these dual spaces are Hausdorff (\cite[Proposition 9.1]{schneider2013nonarchimedean}). A space $V\in C_K^H$ is reflexive if the canonical map $V\to (V^*)^*$ is a topological isomorphism.\par 
  There is a stereotype dual $V'_c$ for $V\in C_K$, defined as $L(V,K)$ equipped with topology of compactoid convergence. We have continuous maps $V_b'\to V_c'\to V_s'$. If $V$ is a Banach (Smith) space, its stereotype dual is a Smith (Banach) space and $(V_c')_c'\cong V$. \\ \par 
  
  Recall the definitions of nuclear Fr\'echet spaces and spaces of compact type. 
  
  \begin{definition}[{\cite[Sec. 1, Definition]{schneider2002locally}}]\label{definitioncompacttype} Let $V,W\in C_K^H$.
  \begin{enumerate}
  	\item A subset $B\subseteq V$ is {\em compactoid} if for any open lattice $L\subset V$, there are finitely many vectors $v_1,...,v_m\in V$ such that $B\subseteq L+\cO_Kv_1+...+\cO_Kv_m$.
  	\item A continuous linear map $f: V\to W$ is {\em compact} if there is an open compact lattice $A$ in $V$ such that $f(A)$ is compactoid and complete.
  	\item V is said to be {\em of compact type} if it is the locally convex inductive limit of a sequence
  	$$V_1\stackrel{\iota_1}\to V_2\stackrel{\iota_2}\to V_3\to \cdots$$
  	of $V_n\in C_K^H$ ($n\in \bN$) with injective compact linear transition maps. We may assume $V_n$ to be Banach spaces (\cite[Proof of Proposition 16.10]{schneider2013nonarchimedean}).
  \end{enumerate}
  \end{definition}

  We will freely use the following lemma.
  
  \begin{lemma}[{\cite[Remark 16.7]{schneider2013nonarchimedean}}]\label{lemmacomposition}  If $g:V\to W$ is a compact map and $f:V_1\to V$, $h:W\to W_1$ are two arbitrary continuous maps. Then $hgf:V_1\to W_1$ is compact.
  \end{lemma}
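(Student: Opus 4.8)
The plan is to unwind the definition of a compact map and transport the witnessing open lattice through the three maps, using only the elementary stability properties of compactoid sets. Since $g$ is compact, fix an open lattice $A\subseteq V$ for which $g(A)$ is compactoid and complete, and set $B:=f^{-1}(A)\subseteq V_1$. First one checks that $B$ is an open lattice: it is an $\cO_K$-submodule because $f$ is $\cO_K$-linear, it is open because $f$ is continuous and $A$ is open, and it is absorbing because for each $v\in V_1$ the scalar map $\cO_K\to V_1$, $\lambda\mapsto\lambda v$, is continuous, so $p^n v\in B$ for $n\gg 0$.

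Next, $(hgf)(B)=h\bigl(g(f(f^{-1}(A)))\bigr)\subseteq h(g(A))$, so it suffices to show $h(g(A))$ is compactoid and, up to passing to its closure, complete. The compactoid assertion is immediate from two facts: any subset of a compactoid set is compactoid, and the continuous linear image of a compactoid set is compactoid. For the latter, given an open lattice $L'\subseteq W_1$, the lattice $h^{-1}(L')\subseteq W$ is open, so by compactoidness of $g(A)$ there are $v_1,\dots,v_m\in W$ with $g(A)\subseteq h^{-1}(L')+\cO_K v_1+\dots+\cO_K v_m$, whence $h(g(A))\subseteq L'+\cO_K h(v_1)+\dots+\cO_K h(v_m)$; as $L'$ was arbitrary, $h(g(A))$ is compactoid, and hence so is the subset $(hgf)(B)$.

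For completeness one uses that $K/\bQ_p$ is finite, so that $\cO_K$ is compact; consequently a compactoid subset of a Hausdorff locally convex $K$-vector space is precompact (totally bounded), and a complete compactoid set is therefore compact (cf. \cite{schneider2013nonarchimedean}). In particular $g(A)$ is compact, its continuous image $h(g(A))$ is compact, hence closed and complete in $W_1$, and so $\overline{(hgf)(B)}$ --- being a closed subset of the complete set $h(g(A))$ --- is itself complete, while remaining compactoid. Thus $hgf$ is compact. (If one adopts the weaker, and for the present purposes sufficient, convention that a compact map need only send some open lattice to a compactoid set, the third paragraph is unnecessary and the lemma is immediate from the first two.)

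I do not expect any genuine obstacle here: the argument is the routine ``preimage of a lattice / image of a compactoid'' manipulation, and the only point requiring a little care --- and only under the stronger reading of the definition --- is the completeness bookkeeping, which is handled by the local compactness of $K$.
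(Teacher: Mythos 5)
The paper offers no proof of this lemma — it is quoted verbatim from Schneider's book (Remark 16.7), whose proof is exactly the argument you give: pull back the witnessing open lattice along $f$, push the compactoid image forward along $h$, and use local compactness of $\cO_K$ to see that a complete compactoid set is compact, so that completeness survives under $h$ and passage to the closure. Your proposal is correct (the only point of friction is that the paper's Definition~\ref{definitioncompacttype}(2) literally asks for $f(A)$ rather than $\overline{f(A)}$ to be complete, whereas your argument — like Schneider's — naturally produces completeness of the closure; you flag this explicitly, and it is an imprecision in the paper's transcription of the definition rather than a gap in your proof).
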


     By \cite[Theorem 1.1, Proposition 1.2]{schneider2002locally}, spaces of compact type are Hausdorff, complete, reflexive and the strong dual is a Frechet space. If $V=\mathrm{colim}_n V_n$ as in the definition, then $V^*=\lim_n V_n^*$. A closed subspace and its relevant quotient of a space of compact type is still a space of compact type. The full subcategory $C_c(C_K)\subseteq C(C_K)$ consisting of spaces of compact type is closed under countable direct sums.

  \begin{definition}[{\cite[Chap. 19, Definition]{schneider2013nonarchimedean})}] A space $V\in C_K$ is called {\em nuclear} if for any open lattice $L\subseteq V$ there exists another open lattice $M\subseteq L$ such that the map $\widehat{V_M}\to \widehat{V_L}$ is compact.
  \end{definition}
  
  Nuclear Banach spaces are finite dimensional (\cite[Theorem 8.5.6]{perez2010locally}), nuclear Fr\'echet spaces are reflexive (\cite[Corollary 19.3]{schneider2013nonarchimedean}). A subspace of a nuclear space is nuclear and a quotient of a nuclear space by its closed subspace is nuclear. A projective limit and a countable inductive limit of nuclear spaces is nuclear (\cite[Theorem 8.5.7]{perez2010locally}).\\ \par 
  
  Recall that in the case of nuclear Fr\'echet spaces or spaces of compact type, stereotype dual and strong dual are the same:
  \begin{lemma}\label{lemmasametopologyofdual} (\cite[Lemma 2.8]{curves})
  	If $X\in C_K$ is a nuclear Fr\'echet space or a space of compact type, then the map $X^*\to X_c'$ is a topological isomorphism.
  \end{lemma}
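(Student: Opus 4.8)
The plan is to prove that the continuous map $X^{*}\to X'_c$ (recalled above, coming from the fact that compactoid subsets are bounded) is a topological isomorphism by reducing it to a single property of $X$ itself: \emph{every bounded subset of $X$ is compactoid}. Indeed, both $X^{*}=X_b'$ and $X'_c$ carry topologies of uniform convergence, respectively on bounded and on compactoid subsets of $X$, so a neighbourhood basis of $0$ is given by the polars $B^{\circ}=\{f\in L(X,K):|f(b)|\le 1 \text{ for all } b\in B\}$ as $B$ runs over the relevant family of sets. Since compactoid sets are bounded, every compactoid polar is strongly open, which is exactly the continuity of $X^{*}\to X'_c$; conversely, if every bounded $B$ is itself compactoid, then $B^{\circ}$ is also a neighbourhood of $0$ in $X'_c$, so the two topologies agree. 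Thus everything comes down to the implication ``bounded $\Rightarrow$ compactoid'' in the two cases.

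If $X$ is a nuclear Fr\'echet space, this is immediate from the definition of nuclearity. Let $B\subseteq X$ be bounded and $L\subseteq X$ an open lattice. Choose an open lattice $M\subseteq L$ with $\widehat{X_M}\to\widehat{X_L}$ compact; then the image in $\widehat{X_L}$ of the unit ball of $\widehat{X_M}$ is compactoid. As $B$ is bounded and $M$ is a neighbourhood of $0$, we have $B\subseteq\lambda M$ for some $\lambda\in K^{\times}$, so the image of $B$ in $X_L$ is contained in a scalar multiple of a compactoid set, hence is compactoid; lifting the finitely many witnessing vectors to $X$ shows $B\subseteq L+\cO_Kv_1+\dots+\cO_Kv_m$. (Equivalently one may cite the standard fact that bounded subsets of a nuclear space are compactoid, e.g.\ \cite{perez2010locally}.)

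If $X$ is of compact type, write $X=\colim_n V_n$ with $V_n$ Banach and injective compact transition maps $\iota_n\colon V_n\to V_{n+1}$, as in Definition \ref{definitioncompacttype}. The decisive point is the \emph{regularity} of such a compactoid inductive limit: any bounded $B\subseteq X$ is contained in and bounded in some $V_n$. Granting this, pick an open bounded lattice $A\subseteq V_n$ with $\iota_n(A)$ compactoid; boundedness gives $B\subseteq\mu A$ for some $\mu\in K^{\times}$, so $\iota_n(B)\subseteq\mu\,\iota_n(A)$ is compactoid in $V_{n+1}$, and since the image of a compactoid set under the continuous inclusion $V_{n+1}\hookrightarrow X$ stays compactoid, $B$ is compactoid in $X$, as desired.

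The main obstacle is precisely the regularity statement for spaces of compact type — equivalently, that such spaces are Montel in the sense that their bounded subsets are compactoid. This is where the compactness of the transition maps is genuinely used (a typical argument extracts, from a bounded set that is not bounded in any $V_n$, a sequence along which one builds an open lattice of $X$ failing to absorb it, contradicting boundedness), and it is the one ingredient that must be imported from the non-archimedean functional analysis literature rather than checked by hand; cf.\ the treatment of spaces of compact type in \cite{schneider2013nonarchimedean} and \cite{perez2010locally}. Once it is in place, the rest is the formal polar computation carried out above. As an alternative, one could use reflexivity of $X$ and the fact (quoted above) that $X^{*}$ is a Fr\'echet space to establish the claim for $X^{*}$ first and transport it through $X\cong (X^{*})^{*}$, at the cost of pinning down the interplay of the strong, stereotype and bidual topologies.
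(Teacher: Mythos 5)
Your argument is correct: the paper itself does not prove this lemma (it is imported as \cite[Lemma 2.8]{curves}), and your reduction of the statement to the semi-Montel property ``bounded $\Rightarrow$ compactoid'' followed by its verification in the two cases is the standard route. The two facts you defer to the literature — that nuclear spaces are semi-Montel and that compactoid inductive limits are regular — are exactly \cite[Theorem 8.5.2--8.5.3]{perez2010locally} and \cite[Theorem 1.1]{schneider2002locally}, so nothing is missing.
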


  We collect the following technical lemma for later usage.

\begin{lemma} \label{lemmaextendcompact}
	If we have a morphism of strictly exact sequences of complete Hausdorff $K$-vector spaces
	\begin{equation}
		\begin{CD}
			0@>>>U@>>>V@>p>>W@>>>0\\
			@. @Vf_U VV @Vf_{V} VV @V f_W VV\\
			0@>>>U'@>>>V'@>p'>>W'@>>>0
		\end{CD}
	\end{equation}
	and assume that $U,U'$ or $W,W'$ are finite dimensional. If two of $f_U,f_V,f_W$ are compact then the one left is also compact.
\end{lemma}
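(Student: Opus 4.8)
## Proof Plan

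The plan is to reduce to the two cases where one of the vertical pairs of spaces is finite-dimensional, treating them symmetrically, and in each case to use the finite-dimensionality to ``split off'' the troublesome portion of the diagram so that Lemma \ref{lemmacomposition} applies. First I would observe that since $U, U'$ (respectively $W, W'$) are finite-dimensional, every linear map into or out of them is automatically continuous and in fact compact (a finite-dimensional Hausdorff locally convex $K$-vector space is the unit ball of itself up to scaling, which is compactoid and complete). So the content of the lemma is entirely about transporting compactness across the ``long'' edge of the diagram.

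\emph{Case 1: $U, U'$ finite-dimensional.} Here $p$ and $p'$ are strict surjections with finite-dimensional kernels, so they admit continuous $K$-linear sections $\sigma: W \to V$ and $\sigma': W' \to V'$ (a strict surjection onto a Hausdorff space with finite-dimensional kernel splits topologically, since one may lift a basis of $W$ along $p$ and the resulting section is continuous because $W$ carries the quotient topology and the complement of $U$ is closed). Now I would argue:
\begin{itemize}
\item If $f_V$ and $f_W$ are compact: from $f_U = f_V|_U$ (restriction of a compact map to a subspace, still compact by Lemma \ref{lemmacomposition}); and $f_V = \sigma' f_W p + (\text{correction into } U')$, where the first term is compact by Lemma \ref{lemmacomposition} since $f_W$ is compact, and the correction term $V \to U'$ lands in a finite-dimensional space hence is compact — but I only need $f_U$, which is immediate. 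The real statements are the other two.
\item If $f_U$ and $f_W$ are compact: I claim $f_V$ is compact. Write any $v \in V$ via the section as $v = \sigma(p(v)) + u$ with $u \in U$; this gives a continuous (not necessarily canonical) decomposition $V \cong W \oplus U$ as topological vector spaces. Under matching decompositions $V \cong W\oplus U$, $V' \cong W' \oplus U'$, the map $f_V$ becomes the matrix $\begin{pmatrix} f_W & 0 \\ * & f_U\end{pmatrix}$ where the lower-left entry $W \to U'$ lands in a finite-dimensional space, hence is compact. A finite sum of compact maps is compact, and $f_W, f_U$ are compact by hypothesis, so $f_V$ is compact.
\item If $f_U$ and $f_V$ are compact: then $f_W$ fits as $f_W \circ p = p' \circ f_V$; since $p$ has a section $\sigma$, $f_W = p' \circ f_V \circ \sigma$, a composite of $f_V$ (compact) with continuous maps, hence compact by Lemma \ref{lemmacomposition}.
\end{itemize}

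\emph{Case 2: $W, W'$ finite-dimensional.} This is dual to Case 1. Now the inclusions $U \hookrightarrow V$ and $U' \hookrightarrow V'$ admit continuous retractions $\rho: V \to U$, $\rho': V' \to U'$ (the image of $U$ is closed of finite codimension, so one can choose a finite-dimensional complement mapping isomorphically onto $W$, giving a continuous projection), yielding topological decompositions $V \cong U \oplus W$, $V' \cong U' \oplus W'$. Writing $f_V$ as a matrix $\begin{pmatrix} f_U & * \\ 0 & f_W\end{pmatrix}$ with upper-right entry $W \to U'$ landing in — wait, the off-diagonal entry here goes $W \to U'$, again finite-dimensional source, hence compact. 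The same three sub-cases then go through verbatim: $f_U = \rho' f_V \iota_U$ is compact when $f_V$ is; $f_W$ is compact whenever its source $W$ is finite-dimensional so this is automatic; and $f_V = \begin{pmatrix} f_U & * \\ 0 & f_W\end{pmatrix}$ is a sum of compact maps whenever $f_U, f_W$ are. Since in Case 2 the map $f_W$ is always compact, the statement reduces to: if one of $f_U, f_V$ is compact then so is the other, which is the retraction/inclusion argument above.

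\emph{Main obstacle.} The one genuinely non-formal point is the existence of the \emph{continuous} splittings $\sigma, \sigma'$ (resp. retractions $\rho, \rho'$). This uses that the sequences are strictly exact, that the spaces are Hausdorff (so finite-dimensional subspaces are closed and topological complements of finite-dimensional or finite-codimensional subspaces exist and are themselves closed), and the standard fact that finite-dimensional locally convex $K$-vector spaces carry a unique locally convex topology with all linear maps continuous (\cite[Prop. 4.13]{schneider2013nonarchimedean}). Once the diagram is split in a way compatible on the top and bottom rows, everything else is an application of Lemma \ref{lemmacomposition} and the elementary observation that any map with finite-dimensional source or target is compact, together with the fact that a finite sum of compact maps is compact. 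I would write the argument uniformly for Case 1 and indicate Case 2 by duality.
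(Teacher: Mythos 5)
Your proof is correct and follows essentially the same route as the paper: use the finite-dimensional hypothesis to split the exact sequences topologically, observe that the finite-rank leg is automatically compact, and propagate compactness through the splitting via Lemma \ref{lemmacomposition} (the paper outsources the sub-cases to \cite[Lemmas 2.3, 2.5, 2.7]{curves} and \cite[Theorem 8.1.3(xi)]{perez2010locally} rather than writing the matrix decomposition explicitly, but the content is identical). Two local justifications are garbled, though the facts you invoke are true: first, $K^n$ is \emph{not} compactoid (no finite-dimensional space over a non-locally-compact... rather, no nonzero $K$-vector space is compactoid as a whole set); the correct statement is that a continuous map of finite rank is compact, because the image of a suitable open lattice is a finitely generated $\cO_K$-submodule of $K^n$, hence compact. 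Second, in Case 1 the kernel $U$ is finite-dimensional but $W$ is not, so ``lift a basis of $W$'' does not produce the section $\sigma$; the correct argument (which you do state in your final paragraph) is that a finite-dimensional subspace of a Hausdorff space over the spherically complete field $K$ admits a continuous projection by Hahn--Banach, and strictness of $p$ then identifies the complement with $W$.
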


\begin{proof}
	Note that in both cases the bottom exact sequence splits \cite[Lemma 2.5]{curves}. First assume $U,U'$ are finite dimensional so $f_U$ is always compact. If $f_W$ is compact, the claim follows from \cite[Lemma 2.7]{curves}. If $f_V$ is compact, since $Im(U)$ is closed, $f_W$ is compact if and only if the composition map from $V$ to $W'$ is compact (\cite[Theorem 8.1.3(xi)]{perez2010locally}) which follows from $f_V$ is compact (\cite[Lemma 2.3]{curves}). \par 
		Assume $W,W'$ are finite dimensional so $f_W$ is always compact. In this case, to give a section of $p$ or $p'$ is just to choose a preimage of a chosen basis hence we can choose a splitting of the morphism. Now both cases are clear.  
		
	\end{proof}

  \subsection{Solid functional analysis}

   We briefly review some solid functional analysis building on the condensed maths introduced by Clausen and Scholze. 
  
  \begin{remark}
    Since we will be interested in cohomology of Stein spaces and will be dealing with concrete objects, it is enough to cut off by an uncountable strong limit cardinal $\kappa$	 \cite[Lecture 2]{scholze2019condensed} (See \cite[Sec. 2.1]{mann2022p} and \cite[Remark 3.2.4]{camargo2024analytic} for discussion of (derived) internal Homs). We will not mention this $\kappa$ later and ignore all set theoritical issues.
  \end{remark}

\begin{example}
\begin{enumerate}
	\item (\cite[Theorem 5.8]{scholze2019condensed}) The fundamental example of analytic ring is $\bZ_\square=(\bZ,\bZ_\square)$ where for an extremally disconnected set $S=\lim_i S_i$, $\bZ_\square[S]=\lim_i \bZ[S_i]$.
	\item (\cite[Proposition 7.9]{scholze2019condensed}) Let $K$ be a finite extension of $\bQ_p$, there are analytic rings $(\cO_K,\cO_{K,\square})$ and $(K,K_{\square})$ such that for any extremally disconnected $S=\lim_i S_i$ 
	$$\cO_{K,\square}[S]=\lim_i \cO_K[S_i],\ K_{\square}(S)=K\otimes_{\cO_K} \cO_{K,\square}[S].$$
	Note that the analytic ring structure on $(\cO_K,\cO_{K,\square})$ is the induced from $\bZ_\square$ (cf. \cite[Lemma A.19]{bosco2021p}).
\end{enumerate}
\end{example}

We recall the definition of solid nuclearity
\begin{definition}[{\cite[Definition 13.10]{scholze2019lectures}}]
    Let $(A,\cM)$ be an analytic ring. A complex $C\in D(A,\cM)$ is solid nuclear if for all extremally disconnected profinite set $S$, the natural map
    $$(A[S]^\vee\otimes_{(A,\cM)}^L C)(*)\to C(S)$$
    is an isomorphism in $D(\Ab)$. Let $D\nuc(A,\cM)$ be the full $\infty$-subcategory of $D(A,\cM)$ spanned by the nuclear complexes.
\end{definition}

\begin{remark}
    There is a definition of solid nuclearity on abelian level (\cite[Definition A.40]{bosco2021p}). Assume $F$ is a non-archimedean local field and $A$ be a Banach (or more generally solid nuclear) $F$-algebra. Consider the analytic ring $(A,\cM)=(A,\bZ)_\square$. Then an object $M\in \Mod(A,\cM)$ is solid nuclear if and only if $M[0]$ is solid nuclear in $D(A,\cM)$. An object $C\in D(A,\cM)$ is solid nuclear if and only if $H^i(C)$ is solid nuclear (\cite[Theorem A.17]{bosco2023rational}). So we hope there will be no ambiguity in this case.
\end{remark}

\begin{notation}
 We call a solid $K$-vector space solid Banach (resp. solid Fr\'echet, nuclear, a solid space of compact type) if it is of the form $\underline{V}$ where $V$ is a classical Banach space (resp. Fr\'echet space, nuclear space, a space of compact type). Note that the definition of nuclear space is different from solid nuclearity. For example, infinite dimensional Banach $K$-vector spaces are all solidly nuclear but not nuclear.
\end{notation}

We will implicitly use the following results:

\begin{theorem}
    The functor $V\to \underline{V}$ induces an equivalence between classical and solid Banach (resp. Smith, resp. Fr\'echet, resp. compact type) $K$-vector spaces
\end{theorem}

\begin{proof}
    This is \cite[Proposition 3.5, Lemma 3.24, Theorem 3.40]{rodrigues2022solid} and \cite[Corollary 1.4]{schneider2002locally}.
\end{proof}

\begin{lemma}[{\cite[Lemma 2.18, 2.20]{curves}}]\label{lemmausualtocondensed}
A sequence of solid Fr\'echet spaces (or spaces of compact type)	
$$0\to \underline{V_1}\to \underline{V_2}\to \underline{V_3}\to 0$$ 
is exact if and only if the sequence of topological vector spaces 
$$0\to V_1\to V_2\to V_3\to 0$$
is strictly exact.
\end{lemma}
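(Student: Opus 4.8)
The plan is to reduce both implications to the case of Banach spaces, to settle that case by an elementary $p$-adic filtration argument, and then to propagate the statement through the countable limits defining Fréchet spaces and the countable colimits defining spaces of compact type.

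\textbf{The Banach case.} Suppose first that $0\to V_1\to V_2\to V_3\to 0$ is strictly exact with all $V_i$ Banach. Rescaling via the open mapping theorem, I would pick $p$-adically complete, $\cO_K$-flat unit balls $T_i\subseteq V_i$ fitting into a short exact sequence $0\to T_1\to T_2\to T_3\to 0$ of $\cO_K$-modules (take $T_2$ the unit ball, $T_1=V_1\cap T_2$, and $T_3$ the image of $T_2$ in $V_3$, which is open hence $p$-adically complete). Since $T_3$ is $\cO_K$-flat, reduction modulo $p^n$ stays short exact and the tower $\{T_1/p^n\}_n$ has surjective transition maps, so $R^1\lim=0$; as $\underline{T_i}=\lim_n\underline{T_i/p^n}$ with each $\underline{T_i/p^n}$ a discrete condensed abelian group, we obtain a short exact sequence $0\to\underline{T_1}\to\underline{T_2}\to\underline{T_3}\to 0$ of solid $\cO_K$-modules, and inverting $p$ (an exact operation carrying $\underline{T_i}$ to $\underline{V_i}$) gives exactness of $0\to\underline{V_1}\to\underline{V_2}\to\underline{V_3}\to 0$. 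Conversely, assume that last sequence is exact. Evaluation at the one-point profinite set $*$ is exact because $*$ is extremally disconnected, so $0\to V_1\to V_2\to V_3\to 0$ is exact as a sequence of $\cO_K$-modules; hence $V_1=\ker(V_2\to V_3)$ is closed in $V_2$. The induced isomorphism of condensed sets $\underline{V_1}\xrightarrow{\ \sim\ }\ker(\underline{V_2}\to\underline{V_3})=\underline{\ker(V_2\to V_3)}$ comes, by full faithfulness of $\underline{(\text{-})}$ on metrizable spaces (Proposition~\ref{propositioncondensedbase}(2)), from a homeomorphism, so $V_1\to V_2$ is a closed embedding; and $V_2\to V_3$ is a continuous linear surjection of Banach spaces, hence open. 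Thus the topological sequence is strictly exact.

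\textbf{Dévissage.} The functor $\underline{(\text{-})}$ is a right adjoint (Proposition~\ref{propositioncondensedbase}), so it commutes with countable limits; and for spaces of compact type, written as ascending unions of compact subspaces with closed-immersion transition maps — using that $K$ is locally compact, so the image of a ball under a compact map is compact — it commutes with the defining colimit by Proposition~\ref{propositioncondensedbase}(4). Filtered colimits being exact on solid modules and countable limits along surjective towers of discrete modules having vanishing $R^1\lim$, it remains to express a strictly exact sequence $0\to V_1\to V_2\to V_3\to 0$ of Fréchet spaces (resp. of spaces of compact type) as a countable limit (resp. colimit) of strictly exact sequences of Banach spaces with surjective (resp. injective compact) transition maps, to apply the Banach case rowwise, and to pass to the limit (resp. colimit). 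For the Fréchet case I would fix any fundamental system of seminorms on $V_2$, take $V_{2,n}$ the corresponding Banach completions, $V_{1,n}$ the closure of the image of $V_1$ in $V_{2,n}$, and $V_{3,n}=V_{2,n}/V_{1,n}$; then $\{V_{2,n}\}_n$ and $\{V_{3,n}\}_n$ have surjective transitions, while $\lim_n V_{1,n}=V_1$ and $\lim_n V_{3,n}=V_3$ by the standard compatibility of completion with closed subspaces and quotients of Fréchet spaces, and the required vanishing $R^1\lim_n\underline{V_{1,n}}=0$ follows — after the cofinal reduction to the discrete modules $\underline{T_{i,n}/p^m}$ — from the classical Mittag-Leffler criterion together with the surjectivity of $V_2\to V_3$. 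The compact-type case is the strong dual of this, using that spaces of compact type and nuclear Fréchet spaces are exchanged by $V\mapsto V^*$, that this functor preserves strict exactness, and that $\underline{(V^*)}\simeq R\Hom_{\bQ_p}(\underline V,\bQ_p)$ for these spaces.

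\textbf{The main obstacle.} I expect the dévissage step to be the hard part: producing, from a strictly exact topological sequence, the Banach (resp. compact) systems with transition maps of the prescribed type, checking that their limits (resp. colimits) recover the original terms, and — above all — controlling the derived limit so that $R^1\lim_n\underline{V_{1,n}}$ vanishes as a solid module rather than merely classically. For the topological bookkeeping one leans on completeness of the closed subspace $V_1$, the open mapping theorem, and (on the compact-type side) Lemma~\ref{lemmaextendcompact}-type arguments to keep transition maps compact; for the derived limit one descends to a bi-indexed system of discrete condensed abelian groups, where $R^1\lim$ is the classical one and vanishes because the constructed transition maps are surjective and $V_2\to V_3$ is onto. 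Everything else is formal.
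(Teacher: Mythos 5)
The paper does not prove this lemma; it is quoted verbatim from \cite[Lemma 2.18, 2.20]{curves}, so there is no in-paper argument to compare against. Your proposal reconstructs what is essentially the standard proof of that cited result: the Banach case via $p$-adically complete lattices, reduction mod $p^n$, and vanishing of $R^1\lim$ for towers of discrete groups with surjective transitions in one direction, and evaluation at $*$ plus the open mapping theorem and full faithfulness of $\underline{(-)}$ on compactly generated spaces in the other; then dévissage to Fréchet and compact-type spaces. Two points deserve more care than your sketch gives them. First, in the Fréchet dévissage the transition maps $V_{1,n+1}\to V_{1,n}$ are only dense-image, not surjective, so the vanishing of $R^1\lim_n \underline{V_{1,n}}$ is genuinely the topological Mittag--Leffler statement (\cite[Lemma A.37]{bosco2021p}); your parenthetical reduction to the discrete quotients $T_{1,n}/p^m$ does repair this (dense image into an open complete lattice becomes surjective mod $p^m$), but as written the phrase ``the constructed transition maps are surjective'' is not literally true at the Banach level. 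Second, your treatment of the compact-type case by dualizing to nuclear Fréchet spaces and invoking $\underline{V^*}\simeq R\Hom_{\bQ_p}(\underline{V},\bQ_p)$ imports exactly the kind of statement (compare Lemma \ref{lemmacompacttypedual}) that one usually proves \emph{after} this lemma; the cited reference instead handles compact-type spaces directly, writing them as countable increasing unions of compact subsets (complete bounded compactoids over a local field are compact) with closed-immersion transitions, so that Proposition \ref{propositioncondensedbase}(4) applies and filtered colimits of solid modules are exact. With those two adjustments your argument is correct and is the intended one.
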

\vspace{3pt}

The following properties of solid tensor product are often used.
  
  \begin{proposition}[{\cite[Corollary A. 65, 67]{bosco2023rational}}]$ $\label{propositionlimitsolidtensorexchange}
  	\begin{enumerate}
  		\item  Any Fr\'echet $K$-vector space is flat for the solid tensor product $\otimes_K^\square$.
  		\item  Let $\{V_n\}$ be an inverse system of solid nuclear $K$-vector spaces and $W$ be a $K$-Fr\'echet space, then
  		$$(\lim_n V_n)\otimes^\square_K W\cong \lim_n(V_n\otimes^\square_K W).$$
  		\item  Let $\cV_n$ be a countable inverse system of objects in $D(K_\square)$ and representable by a complex of solid nuclear $K$-vector spaces. Assume $\cW\in D(K_\square)$ is represented by a complex of Fr\'echet space. Then
  		$$ (R\lim_n \cV_n)\otimes^{L_\square}_K \cW\simeq  R\lim_n(\cV_n\otimes^{L_\square}_K \cW).$$
  	\end{enumerate}
  \end{proposition}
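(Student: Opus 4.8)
The statement to prove is Proposition~\ref{propositionlimitsolidtensorexchange} (the three properties of the solid tensor product: flatness of Fréchet spaces, commutation of $\otimes^\square_K$ with countable inverse limits of nuclears against a Fréchet space, and the derived version of this with $R\lim$). Since the result is cited verbatim from \cite[Corollary~A.65, A.67]{bosco2023rational}, the honest "proof" here is a pointer to that reference, but let me sketch how one would actually argue each part from the formalism recalled above.

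\textbf{Part (1).} The plan is to reduce flatness to a statement about the building blocks of solid modules. A Fréchet space over $K$ is $V=\lim_n V_n$ with $V_n$ Banach and surjective transition maps; writing $V_n$ as a solidified free module $\prod_I \cO_K$ up to isogeny, one wants to show $-\otimes^\square_K \underline V$ is exact. The key input is that for solid modules built from products of copies of $\cO_{K,\square}$, the solid tensor product is computed by literal completed tensor products, and $\bZ_\square$ (hence $\cO_{K,\square}$) has the property that $\prod_I \bZ \otimes^\square \prod_J \bZ = \prod_{I\times J}\bZ$, with no higher Tor. One then handles the inverse limit: a Fréchet space, as a countable limit of Banach spaces with surjective transitions, has $R\lim = \lim$ and the $R^1\lim$ vanishes, so flatness of each $V_n$ propagates to $V$. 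The main obstacle in Part~(1) is controlling the derived inverse limit and checking that no $\Tor^1$ is introduced when passing from Banach to Fréchet; this is where one invokes the Mittag-Leffler condition together with the fact (Proposition~\ref{propositionsolid}) that $D(\Mod^\ssolid_{K_\square})\hookrightarrow D(\Mod^\scond_K)$ is fully faithful and closed under limits, so the computation can be done after forgetting solidity.

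\textbf{Part (2).} This is a special case of the mechanism in Part~(1) combined with a projection-formula-type argument. Since $W$ is Fréchet, hence flat by Part~(1), the functor $-\otimes^\square_K W$ is exact; it remains to see it commutes with the countable $\lim_n$ of the nuclear system $\{V_n\}$. The point is that nuclear solid $K$-vector spaces are exactly the ones for which tensoring is well-behaved: $V_n \otimes^\square_K W$ for $W$ Fréchet and $V_n$ nuclear is again computed without higher terms, and the inverse limit commutes because (again using full faithfulness of the solid derived category under limits) one can check the identity $\lim_n(V_n\otimes^\square_K W)\cong(\lim_n V_n)\otimes^\square_K W$ on underlying condensed objects, where it follows from $\lim$ commuting with $\lim$ and the explicit product description of $W=\lim W_m$. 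The delicate step is ensuring the tensor product does not produce a nontrivial $R^1\lim$ obstruction in the system $\{V_n \otimes^\square_K W\}$, which uses nuclearity of the $V_n$ to guarantee the system stays Mittag-Leffler (or rather that the derived limit collapses).

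\textbf{Part (3).} This is the derived upgrade of Part~(2): replace the modules by complexes $\cV_n$ representable by complexes of solid nuclear spaces and $\cW$ by a complex of Fréchet spaces. The strategy is to reduce to Part~(2) degreewise. Represent $\cW$ by a complex of Fréchet spaces, which are flat by Part~(1), so $-\otimes^{L\square}_K \cW$ preserves all limits and is computed term-by-term with the underived tensor product; then $R\lim_n(\cV_n\otimes^{L\square}_K\cW)$ is computed by a double complex whose columns are the $\cV_n \otimes^\square_K \cW_m$, and one applies Part~(2) in each bidegree together with the commutation of $R\lim$ with the totalization. The only genuinely new point over Part~(2) is bookkeeping: one must know that $R\lim$ of a countable system commutes with the (possibly infinite) products appearing in the Fréchet resolution $\cW=R\lim_m \cW_m$, which again follows from the stability of $D(\Mod^\ssolid_{K_\square})$ under all limits and the fact that for the solid formalism countable $R\lim$ has cohomological dimension $\le 1$. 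I expect Part~(3) to be the least difficult conceptually once Parts~(1) and~(2) are in hand, the main obstacle throughout being the careful management of derived inverse limits and the verification that nuclearity/Fréchet hypotheses are exactly what is needed to kill the $R^1\lim$ terms.
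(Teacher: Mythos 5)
The paper itself offers no proof of this proposition: it is quoted verbatim from \cite[Corollary A.65, A.67]{bosco2023rational} and used as a black box, so there is no internal argument to compare yours against. Judged on its own terms, your sketch has the right overall shape (Banach $\to$ Fr\'echet, nuclearity as the operative hypothesis, degreewise reduction for (3)), but two steps as written would fail.

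First, in Part (1) you identify a $K$-Banach space with ``a solidified free module $\prod_I \cO_K$ up to isogeny''. For infinite $I$ that object is a Smith space; a $K$-Banach space is the completed direct sum $\widehat{\bigoplus}_I K=c_0(I,K)$, which is the \emph{dual} of $\prod_I\cO_K[1/p]$, not isomorphic to it. Consequently the computation $\prod_I\bZ\otimes^\square\prod_J\bZ=\prod_{I\times J}\bZ$ does not directly give flatness of Banach spaces; one needs the separate computation of $\widehat{\bigoplus}_I K\otimes^{L\square}_K M$. Second, and more seriously, in Parts (1) and (2) the passage across the inverse limit is justified by ``full faithfulness of the solid category under limits'' and ``checking on underlying condensed objects, where $\lim$ commutes with $\lim$''. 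This does not address the actual difficulty: $-\otimes^\square_K W$ is a colimit-type functor and does not commute with limits in general, and the solid tensor product is not computed on underlying condensed modules by any limit-preserving recipe. The mechanism that makes the nuclearity hypothesis in (2) work is that for a nuclear solid module $N$ and a Banach space $V$ the natural map $N\otimes^\square_K V\to \underline{\Hom}_K(V^\vee,N)$ (with $V^\vee$ the dual Smith space, a retract of some $\prod_I\cO_K[1/p]$) is an isomorphism, converting the tensor product into a functor that manifestly commutes with limits in $N$. Without isolating this (or an equivalent trace-class argument), neither the commutation with $\lim_n V_n$ in (2) nor the propagation of flatness from Banach to Fr\'echet in (1) is established; your appeal to Mittag--Leffler controls $R^1\lim$ of the source system but not the interchange of $\otimes^\square$ with $R\lim$. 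Part (3) is a reasonable reduction to (2) once (1) and (2) are in place.
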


\section{Topology on Cohomology Groups}
 Let $X$ be a smooth, geometrically connected Stein space over $K$. 
 In this section, we prove the claims on the topological property of (compactly supported) pro-\'etale cohomologies of smooth Stein spaces in theorem \ref{conjecturemain} (Theorem \ref{theorem1}, Theorem \ref{theorem2}).

 To begin with, we recall the definition of compactly supported pro-\'etale cohomology in \cite[Sec. 7.3]{achinger2025compactlysupportedpadicproetale}.
 For a smooth partially proper rigid analytic variety $X$ over $K$, the cohomology of the boundary is defined as 
$$R\Gamma_{\proet}(\partial X,\bQ_p(j)):=\colim_{U\subseteq X} R\Gamma_{\proet}(X\backslash U,\bQ_p(j))$$
where $U\subseteq X$ goes through admissible quasi-compact opens of $X$. The compactly supported pro-\'etale cohomology is defined as the fiber
\begin{equation}\label{formuladefofcompactlysupport}
R\Gamma_{\proet,c}(X,\bQ_p(j)):=[R\Gamma_{\proet}(X,\bQ_p(j))\to R\Gamma_{\proet}(\partial X,\bQ_p(j))].
\end{equation}

 The proof of the topological properties relies on the following comparison theorems.

 \begin{theorem}[{\cite[Theorem 6.9]{niziol2021cohomology}, \cite[Theorem 7.7]{bosco2023rational} \cite[Theorem 6.13]{achinger2025compactlysupportedpadicproetale}}]\label{theoremclassicalperiod}\
 Let $r\geq 0$
 \begin{enumerate}
 	\item There is a $G_K$-equivariant exact sequence of solid $\bQ_p$-vector spaces
 	\begin{equation}\label{exactsequence1}
 		0\to \Omega^{r-1}(X_C)/ker(d)\to H^r_{\proet}(X_C,\bQ_p(r))\to (H^r_{\HK}(X_C)\otimes^\square_{\check{C}} \hat{B}_{\st}^+)^{\varphi=p^r, N=0}\to 0
 	\end{equation}
 	where $H^r_{\HK}(X_C)$ is the Hyodo-Kato cohomology defined in \cite{colmez2020cohomology} and 
 	$$\hat{B}^+_{\st}:=A_{\crys}\langle t_p[\tilde{p}]^{-1}-1\rangle^{\wedge}[1/p]$$
  is a completed version of the period ring $B_{\st}^+$ (See a more geometric definition in \cite[Sec 2.1.1]{niziol2021cohomology}). Note that $\hat{B}_{\st}^+$ is a Banach $\check{C}$-algebra and morphisms from $B_{\st}^+$ and $\hat{B}_{\st}^+$ to $B_{\dR}^+$ are all normalized at $p$ (See $loc.cit$).\\
 	
 	\item Put 
 	$$DR_c(X_C,r):= R\Gamma_{\dR,c}(X_C/B^+_{\dR})/F^r$$ $$HK_c(X_C,r):=[R\Gamma_{\HK,c}(X_C)\otimes_{\check{C}}^{L_\square} \hat{B}_{\st}^+]^{\varphi=p^r, N=0}.$$
 	Here $R\Gamma_{\HK,c}(X)$ is the (complete) compactly supported Hyodo-Kato cohomology of $X$ defined in \cite[Sec. 3.2]{achinger2025compactlysupportedpadicproetale}, which is defined  similarly to \eqref{formuladefofcompactlysupport}.  
 	Then there is a $G_K$-equivariant exact sequence of solid $\bQ_p$-vector spaces
 	\begin{equation}\label{exactsequence2}
 	\begin{aligned}
 		&H^{r-1}HK_c(X_C,r)\to H^{r-1}DR_c(X_C,r)\to H^r_{\proet,c}(X_C,\bQ_p(r))\\
 		&\to H^rHK_c(X_C,r)\to H^rDR_c(X_C,r).
 		\end{aligned}
 	\end{equation} 
 \end{enumerate} 	
 \end{theorem}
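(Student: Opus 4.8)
The plan is to reduce both parts to the syntomic comparison theorems, after which the two exact sequences are obtained by unwinding long exact sequences of homotopy fibres. The first ingredient is the syntomic model for pro-\'etale cohomology. For $X/K$ smooth and partially proper (in particular for a Stein space) and $r\geq 0$, the period isomorphism of Colmez--Nizio{\l} identifies $R\Gamma_{\proet}(X_C,\bQ_p(r))$, in cohomological degrees $\leq r$, with the (geometric) syntomic complex
$$R\Gamma_{\syn}(X_C,r)=\Big[\,[R\Gamma_{\HK}(X_C)\otimes_{\check{C}}^{L_\square}\hat{B}^+_{\st}]^{\varphi=p^r,\,N=0}\xrightarrow{\ \iota_{\dR}\ }R\Gamma_{\dR}(X_C/B^+_{\dR})/F^r\,\Big],$$
the homotopy fibre of the map $\iota_{\dR}$ induced by the Hyodo--Kato map composed with $\hat{B}^+_{\st}\to B^+_{\dR}$. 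The compactly supported analogue, constructed in \cite{achinger2025compactlysupportedpadicproetale}, is the homotopy fibre $[HK_c(X_C,r)\to DR_c(X_C,r)]$, and the compactly supported period map identifies it with $R\Gamma_{\proet,c}(X_C,\bQ_p(r))$ in the same range. All of these identifications are $G_K$-equivariant, since $R\Gamma_{\HK}$, $R\Gamma_{\dR}$ and the period rings carry compatible $G_K$-actions.

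Part (2) is then immediate: taking the long exact cohomology sequence of $R\Gamma_{\proet,c}(X_C,\bQ_p(r))\simeq[HK_c(X_C,r)\to DR_c(X_C,r)]$ and extracting the five consecutive terms around degree $r$ gives precisely \eqref{exactsequence2}. One does not simplify the terms further; unlike in part (1), the compactly supported de Rham complex is a filtered colimit over complements of quasi-compact opens and there is no vanishing available to collapse the sequence.

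For part (1) one exploits the Stein hypothesis to collapse the long exact sequence of $R\Gamma_{\syn}(X_C,r)$. Two inputs are needed. First, by the structure theory of Hyodo--Kato cohomology of Stein spaces, $R\Gamma_{\HK}(X_C)$ is represented by a complex of nuclear Fr\'echet $\check{C}$-vector spaces with strict differentials; since Fr\'echet spaces are flat for $\otimes^\square$ (Proposition \ref{propositionlimitsolidtensorexchange}), $-\otimes^{L_\square}_{\check{C}}\hat{B}^+_{\st}$ is computed termwise and preserves strictness. Second, forming the homotopy fixed points $[-]^{\varphi=p^r,\,N=0}$ commutes with passage to cohomology in degrees $r-1$ and $r$ -- equivalently, the Koszul complex of the commuting pair $(\varphi-p^r,N)$ on $H^i_{\HK}(X_C)\otimes^\square_{\check{C}}\hat{B}^+_{\st}$ is acyclic outside degree $0$ -- which rests on the surjectivity and strictness of $\varphi-p^r$ and of $N$ on $\hat{B}^+_{\st}$ and on the fundamental exact sequence $0\to\bQ_p(r)\to(\hat{B}^+_{\st})^{N=0,\varphi=p^r}\to B^+_{\dR}/F^r\to 0$. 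Granting these, $H^i$ of the Hyodo--Kato side equals $(H^i_{\HK}(X_C)\otimes^\square_{\check{C}}\hat{B}^+_{\st})^{\varphi=p^r,N=0}$ for $i=r-1,r$, while on the de Rham side, unwinding the convolution of the Hodge filtration with the $t$-adic filtration on $B^+_{\dR}$ shows that $R\Gamma_{\dR}(X_C/B^+_{\dR})/F^r$ is supported in degrees $<r$ and that its $H^{r-1}$ is a quotient of $\Omega^{r-1}(X_C)$. Feeding this, the strictness of the de Rham complex of a Stein space, and the $B^+_{\dR}$-comparison between Hyodo--Kato and de Rham cohomology (which makes the Hyodo--Kato side cover $H^{r-1}_{\dR}(X_C)$ and kill the obstruction in degree $r$) into the long exact sequence collapses it to \eqref{exactsequence1}.

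The main obstacle is the exactness of $[-]^{\varphi=p^r,N=0}$ used in part (1): this is a genuinely analytic statement about strictness of the differentials of the solid $\bQ_p$-complexes involved, which holds only because of the fine structure of $\hat{B}^+_{\st}$ (the fundamental exact sequences, surjectivity of $\varphi-p^r$) together with the nuclear Fr\'echet nature of Hyodo--Kato cohomology of a Stein space. Once this input is in place, both exact sequences are formal consequences of the syntomic comparison; this is exactly the content of \cite[Theorem 6.9]{niziol2021cohomology}, \cite[Theorem 7.7]{bosco2023rational} and \cite[Theorem 6.13]{achinger2025compactlysupportedpadicproetale}, which we simply cite.
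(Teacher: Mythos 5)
This statement is quoted from the literature and the paper offers no proof of its own beyond the citations to \cite[Theorem 6.9]{niziol2021cohomology}, \cite[Theorem 7.7]{bosco2023rational} and \cite[Theorem 6.13]{achinger2025compactlysupportedpadicproetale}, which is also where your proposal ultimately lands. Your sketch of what those references actually do --- the syntomic complex as the fibre of the Hyodo--Kato side mapping to $R\Gamma_{\dR}/F^r$, the truncated period isomorphism, the long exact sequence of the fibre giving \eqref{exactsequence2} directly, and for \eqref{exactsequence1} the vanishing of $DR(X_C,r)$ in degrees $\geq r$ together with the exactness of $[-]^{\varphi=p^r,N=0}$ as the key analytic input --- is an accurate account of the cited proofs, so this is essentially the same (citation-based) treatment as the paper's.
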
 
 
 We will use the following lemmas on compatibility between continuous group cohomologies and condensed group cohomologies.
 
 \begin{lemma}[{\cite[Proposition B.2]{bosco2021p}}]
 	Let $G$ be a profinite group and let $V$ be a $G$-module in solid abelian groups. Then
 	\begin{enumerate}
 		\item The complex $R\Gamma(G, V)$ is quasi-isomorphic to the complex of solid abelian groups
 		$$V\to \underline{\Hom}(\bZ[\underline{G}],V)\to \underline{\Hom}(\bZ[\underline{G}\times \underline{G}],V)\to \cdots $$
 		\item If $V=V_{top}$ where $V_{top}$ is a T1 topological $G$-module over $\bZ$, then for each $i\geq 0$ we have a natural isomorphism of abelian groups 
 		$$H^i(G,V)(*)\cong H^i_{\cont}(G,V). $$
 	\end{enumerate}
 	 \end{lemma}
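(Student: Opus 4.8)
The plan is to prove (1) by realizing condensed group cohomology as a derived internal $\Hom$ over the solid group ring and exhibiting the homogeneous bar complex as a projective resolution there, and then to prove (2) by evaluating the resulting inhomogeneous complex at the one-point set $*$.

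For (1): one uses that for a profinite group $G$ one has $R\Gamma(G,V)\simeq R\underline{\Hom}_{\bZ_\square[\underline{G}]}(\bZ,V)$ in $D(\Mod^{\ssolid}_{\bZ_\square[\underline{G}]})$ (this is the definition of condensed group cohomology used in the references, or is immediate from it). I would set $B_n:=\bZ_\square[\underline{G}^{n+1}]$ with the diagonal $\underline{G}$-action and form the augmented homogeneous bar complex $\cdots\to B_1\to B_0\to\bZ\to 0$. The usual extra degeneracy $s_n(g_0,\dots,g_n)=(1,g_0,\dots,g_n)$ is a continuous, $\bZ$-linear morphism of condensed abelian groups — it is induced by the inclusion of a factor — and gives a contracting homotopy of the augmented complex of condensed abelian groups; since solidification is additive it preserves this null-homotopy, so the solidified augmented complex is exact in $\Mod^{\ssolid}_{\bZ_\square[\underline{G}]}$. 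For the terms, the standard isomorphism of solid $\bZ_\square[\underline{G}]$-modules $B_n\simeq\bZ_\square[\underline{G}]\otimes^{\square}_{\bZ_\square}\bZ_\square[\underline{G}^n]$ (left-regular action on the first factor, trivial on the second) shows that $B_n$ is projective: indeed $\underline{G}^n$ is a profinite set, so $\bZ_\square[\underline{G}^n]$ is projective in $\Mod^{\ssolid}_{\bZ_\square}$, and the induction functor $\bZ_\square[\underline{G}]\otimes^{\square}_{\bZ_\square}(-)$, being left adjoint to the exact restriction, preserves projectives. Hence $B_\bullet\to\bZ$ is a projective resolution, so that
$$R\Gamma(G,V)\simeq\underline{\Hom}_{\bZ_\square[\underline{G}]}(B_\bullet,V);$$
the induction–restriction adjunction, together with solidity of $V$ (so that $\underline{\Hom}(\bZ[\underline{G}^n],V)=\underline{\Hom}(\bZ_\square[\underline{G}^n],V)$), identifies the degree-$n$ term with $\underline{\Hom}(\bZ[\underline{G}^n],V)$, and unwinding the bar differential gives precisely the displayed complex.

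For (2): I would first record that the evaluation functor $(-)(*)$ on condensed abelian groups is exact, because $*$ is extremally disconnected, hence an acyclic object of $*_{\proet}$. Thus, writing $C^n:=\underline{\Hom}(\bZ[\underline{G}^n],V)$, part (1) gives $H^i(G,V)(*)=H^i\big(C^\bullet(*)\big)$. Next, $C^n(*)=\Hom(\bZ[\underline{G}^n],V)=V(\underline{G^n})$ (freeness of $\bZ[-]$ plus Yoneda), and since $G$ is profinite, $G^n$ is a profinite set, so by the very definition $\underline{V_{top}}=\sC(-,V_{top})$ one gets $V(\underline{G^n})=\sC(G^n,V_{top})$, the group of inhomogeneous continuous $n$-cochains of $G$ with values in $V_{top}$ (the T1 hypothesis serves, at most, to make this identification of hom-sets with continuous maps unambiguous). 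On $*$-sections the bar differential becomes the standard inhomogeneous cochain differential — using that the induced $G$-action on $V(*)=V_{top}$ is the given one — so $C^\bullet(*)=C^\bullet_{\cont}(G,V_{top})$, and passing to cohomology yields $H^i(G,V)(*)\cong H^i_{\cont}(G,V_{top})$, naturally in $V_{top}$.

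The only non-formal ingredient, and the step I expect to need the most care, is the projectivity of $\bZ_\square[S]$ in solid abelian groups for a profinite set $S$: this is exactly where solidity matters — in the plain condensed setting $\bZ[S]$ is projective only for extremally disconnected $S$ — and it is what makes the bar resolution usable. The rest is bookkeeping: that solidification carries the merely $\bZ$-linear (but continuous) contracting homotopy to a homotopy of solidified complexes, and that the bar differential matches the continuous cochain differential on $*$-sections.
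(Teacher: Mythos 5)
The paper offers no proof of this lemma---it is quoted verbatim from the cited reference (Bosco, Prop.\ B.2)---and your argument is correct and reproduces the standard proof from that source: the homogeneous bar complex $\bZ_\square[\underline{G}^{\bullet+1}]\to\bZ$ is split over $\bZ$ and termwise induced from $\bZ_\square[\underline{G}^n]$, whose projectivity in solid abelian groups for profinite $\underline{G}^n$ is indeed the one non-formal input, and part (2) follows by exactness of evaluation at the extremally disconnected point $*$ together with $\underline{V_{top}}(G^n)=\sC(G^n,V_{top})$. Your reconstruction is sound and takes essentially the same route as the cited proof.
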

 
 \begin{lemma}\label{lemmacondenseandcontinuous}
 	Let $G$ be a profinite group and $V$ be a Banach $\bQ_p$-vector space with a continuous action of $G$. Then
 	  	$$ \underline{\tilde{H}^i(G,V)}\cong  H^i(G,\underline{V}).$$
 	  	Assume further that $ H^i_{\cont}(G,V)$ is finite dimensional, then   	  	$$\underline{\tilde{H}^i(G,V)}\cong  \underline{H^i_{\cont}(G,V)}\cong H^i(G,\underline{V}). $$
 \end{lemma}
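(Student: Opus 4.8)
The plan is to prove both isomorphisms by passing through the cohomology object $\widetilde H^i(G,V)$ of the complete bar resolution, using the previous lemma to compute condensed group cohomology. Recall from Proposition~B.2 of \cite{bosco2021p} that $R\Gamma(G,\underline V)$ is computed by the explicit complex of solid abelian groups $\underline V\to\underline\Hom(\bZ[\underline G],\underline V)\to\underline\Hom(\bZ[\underline G\times\underline G],\underline V)\to\cdots$. Since $G$ is profinite and $V$ is Banach, each term $\underline\Hom(\bZ[\underline{G^n}],\underline V)=\sC(G^n,V)$ is itself a solid Banach space (space of continuous functions on a profinite set with values in a Banach space), and this complex is precisely the solidification $\underline{C^\bullet}$ of the usual complex $C^\bullet$ of continuous cochains computing $H^\bullet_{\cont}(G,V)$. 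So the first step is to observe that, because the functor $V\mapsto\underline V$ is exact on the quasi-abelian category of Banach spaces and commutes with the relevant (countable) limits, taking the $i$-th cohomology object $\widetilde H^i$ of $C^\bullet$ in $C_K^H$ and then solidifying agrees with taking $H^i$ of the solidified complex; this gives $\underline{\widetilde H^i(G,V)}\cong H^i(G,\underline V)$.

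More carefully, I would argue as follows. Write $Z^i=\ker(d_i)\subseteq C^i$ and $B^i=\im(d_{i-1})\subseteq C^i$; these are subspaces of a Banach space, $Z^i$ is closed, and $\widetilde H^i(G,V)=(\,\coim(d_{i-1})\to Z^i\,)$, i.e.\ the two-term complex $[\overline{B^i}\hookrightarrow Z^i]$ where $\overline{B^i}$ is the closure of $B^i$ (note $\coim$ in $C_K^H$ is the closure of the image). Solidifying the strictly exact pieces $0\to Z^i\to C^i\to C^i/Z^i\to 0$ and $0\to\overline{B^i}\to Z^i\to Z^i/\overline{B^i}\to 0$ stays exact by Lemma~\ref{lemmausualtocondensed} (or directly by exactness of $\underline{(-)}$ on Banach spaces), and one identifies $H^i$ of the solid bar complex with $\underline{Z^i}/\underline{\overline{B^i}}=\underline{Z^i/\overline{B^i}}=\underline{\widetilde H^i(G,V)}$. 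This proves the first claim.

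For the second claim, suppose $H^i_{\cont}(G,V)$ is finite dimensional. The canonical map $\widetilde H^i(G,V)\to H^i(G,V)=Z^i/B^i$ is an isomorphism of abstract vector spaces exactly when $B^i$ is closed, i.e.\ $\overline{B^i}=B^i$; since $Z^i/\overline{B^i}$ is Hausdorff and $Z^i/B^i$ is finite dimensional with $Z^i/\overline{B^i}$ a further quotient, a standard open mapping argument (a finite-dimensional Hausdorff quotient of a Fr\'echet/Banach space forces the relevant image to be closed; cf.\ \cite[Lemma 2.5]{curves}) shows $B^i$ is already closed and carries its unique Hausdorff topology. Hence $\widetilde H^i(G,V)\cong H^i_{\cont}(G,V)$ as topological vector spaces, and $\underline{\widetilde H^i(G,V)}\cong\underline{H^i_{\cont}(G,V)}$. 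Combining with the first part gives the chain $\underline{\widetilde H^i(G,V)}\cong\underline{H^i_{\cont}(G,V)}\cong H^i(G,\underline V)$.

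The main obstacle is the first, general statement: one must be careful that solidification interacts correctly with the non-exactness phenomena in the quasi-abelian category $C_K^H$ — specifically that $H^i$ of a complex of solid Banach spaces is the solidification of the \emph{derived} ($\widetilde H^i$, i.e.\ Hausdorff coimage-to-kernel) cohomology object rather than the naive one — and that the countable products/limits defining $\sC(G^n,V)$ and $\bZ[\underline{G^n}]$ are compatible with $\underline{(-)}$. Once the bar complex is identified with the solidification of the continuous cochain complex and exactness of $\underline{(-)}$ on short strictly exact sequences of Banach spaces (Lemma~\ref{lemmausualtocondensed}) is in hand, the rest is formal.
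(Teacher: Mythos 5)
Your proposal is correct and follows essentially the same route as the paper: identify the condensed bar complex term-by-term with the (condensed object associated to the) continuous cochain complex via $\sC(G^n\times S,V)$, deduce the first isomorphism from the compatibility of strict exactness with $\underline{(\,\cdot\,)}$ (Lemma~\ref{lemmausualtocondensed}), and for the second use the open-mapping-theorem fact that a continuous map of Banach spaces with finite-dimensional cokernel has closed image. The only quibble is that in your last step the closedness of $B^i$ must be deduced from $B^i$ being the image of the continuous map $d_{i-1}\colon C^{i-1}\to Z^i$ of Banach spaces with finite-dimensional cokernel, not merely from the finite-dimensionality of $Z^i/B^i$; but this is exactly the argument you (and the paper) invoke.
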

 
 \begin{proof}
 	Let $S$ be a profinite set. The condensed group cohomology is computed by
 	$$R\Gamma(G,\underline{V}): n\to \underline{\Hom}(\bZ[G^n],\underline{V}).$$
 	$$ \underline{\Hom}(\bZ[G^n],\underline{V})(S)=\Hom(\bZ[G^n\times S],\underline{V})\simeq \sC(G^n\times S, V)$$
 	where $\sC(-,-)$ denotes the space continuous maps with compact open topology. 
 	The continuous group cohomology is computed by 
 	$$\underline{R\Gamma_{\cont}(G,V)}: n\to \underline{\sC(G^n,V)}$$
 	$$\underline{\sC(G^n,V)}(S)=\sC(S,\sC(G^n,V))\cong \sC(G^n\times S,V)$$
	where we used compact open topology is exponential. The compact open topology on $\sC(G^n,V)$ coincides with the supremum topology under which it is a Banach space. The first claim now follows from Lemma \ref{lemmausualtocondensed}.
	
	  For the second claim, note that if $f: A\to B$ is a continuous map of Banach $\bQ_p$-vector spaces such that $M:=\coker(f)$ is finite dimensional, then $f$ is strict. This is because if we let $\pi:B\to M$ be the projection, then $\ker(\pi)$ is closed in $B$ as $M$ is Hausdorff. So $A\to \ker(\pi)$ is a continuous surjective map of Banach spaces, the open mapping theorem implies the strictness of $f$. The second claim follows if we take $A=\sC(G^{i-1},V)$, $B=\ker(d_i)$ and $f=d_{i-1}$
 \end{proof}
 

 \subsection{Arithmetic pro-\'etale cohomology}
 We start with recalling a well-known result.
 
\begin{lemma} \label{lemmafundanuclearfrechet}
	The spaces $\Omega^i(X)$ are nuclear Fr\'echet spaces. 
	\end{lemma}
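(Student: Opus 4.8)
The plan is to realize $\Omega^i(X)$ as a closed direct summand of a finite free module over $\cO(X)$ and then to invoke the permanence properties of nuclear and of Fréchet spaces recorded above. Since $X$ is Stein, it is exhausted by an increasing admissible covering $U_1\subseteq U_2\subseteq\cdots$ by affinoid subdomains with compact restriction maps $\cO(U_{n+1})\to\cO(U_n)$; in particular $\cO(X)=\lim_n\cO(U_n)$ is a nuclear Fréchet space (see e.g. \cite[Sec. 2]{colmez2020cohomology}). So, equipping $\cO(X)^N$ with the product topology, it suffices to show that $\Omega^i(X)$ is, compatibly with topologies, a direct summand of some $\cO(X)^N$: then $\cO(X)^N$ is nuclear Fréchet (a finite product of nuclear spaces is nuclear), $\Omega^i(X)$ is a closed subspace of it (being the image of a continuous idempotent on a Hausdorff space), hence nuclear (a subspace of a nuclear space is nuclear) and Fréchet (a closed subspace of a Fréchet space is Fréchet).

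To produce the splitting I would argue on the level of coherent sheaves. As $X$ is smooth of pure dimension $d$, the sheaf $\Omega^1_X$ is locally free of rank $d$, hence $\Omega^i_X=\wedge^i\Omega^1_X$ is a locally free coherent $\cO_X$-module of finite rank. By the analogue of Cartan's Theorems A and B for rigid analytic Stein spaces, $\Omega^i_X$ is globally generated, giving an exact sequence of coherent sheaves $0\to P\to\cO_X^N\to\Omega^i_X\to 0$; its extension class lives in $\Ext^1_{\cO_X}(\Omega^i_X,P)=H^1(X,\mathcal{H}om_{\cO_X}(\Omega^i_X,P))$ (the local Ext sheaves vanish since $\Omega^i_X$ is locally free), and this group is zero because $\mathcal{H}om_{\cO_X}(\Omega^i_X,P)$ is coherent and $X$ is Stein (Theorem B). Taking global sections yields $\cO(X)^N\cong\Gamma(X,P)\oplus\Omega^i(X)$; since the idempotent $\cO(X)^N\to\cO(X)^N$ with image $\Omega^i(X)$ is continuous, the algebraic splitting is automatically topological, and the subspace topology on $\Omega^i(X)$ is its natural Fréchet topology. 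This completes the argument.

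Alternatively — and this avoids invoking global generation — one can work directly with the exhaustion: differentials are compatible with restriction to admissible opens and $\{U_n\}$ is a nested admissible cover, so $\Omega^i(X)=\lim_n\Omega^i(U_n)$ with each $\Omega^i(U_n)$ a finite projective $\cO(U_n)$-module, hence a Banach space; and each transition map $\Omega^i(U_{n+1})\to\Omega^i(U_n)$ is compact, since after choosing $\cO(U_{n+1})^N\cong\Omega^i(U_{n+1})\oplus P$ and base-changing along $\cO(U_{n+1})\to\cO(U_n)$ it factors as $\Omega^i(U_{n+1})\hookrightarrow\cO(U_{n+1})^N\to\cO(U_n)^N\twoheadrightarrow\Omega^i(U_n)$, whose middle arrow is a finite direct sum of the compact map $\cO(U_{n+1})\to\cO(U_n)$, so that Lemma~\ref{lemmacomposition} applies; a countable inverse limit of Banach spaces with compact transition maps is nuclear Fréchet. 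The one point I expect to require genuine care is precisely this interplay between the algebra and the topology — i.e. checking that $\Omega^i_X$ is globally generated with splitting presentation (the input of Theorems A and B) together with the fact that the resulting splitting is topological, or, in the alternative argument, the compactness of the maps $\Omega^i(U_{n+1})\to\Omega^i(U_n)$; everything else is a formal consequence of the functional-analytic facts collected above.
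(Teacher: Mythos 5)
Your ``alternative'' argument is essentially the paper's proof: the paper also writes $X$ as an increasing union of affinoids, reduces the compactness of the transition maps on $\Omega^i$ to the compactness of the restriction maps between the ambient Tate algebras via finite presentations, and concludes with the fact that a countable inverse limit of Banach spaces with compact transition maps is nuclear Fr\'echet (\cite[Proposition 19.9]{schneider2013nonarchimedean}). The only real difference is that you quote the compactness of $\cO(U_{n+1})\to\cO(U_n)$ (equivalently, the nuclearity of $\cO(X)$) from the literature, whereas the paper makes it self-contained by an explicit Gauss-norm estimate showing $T_n(\delta)\to T_n$ is compact for $\delta>1$; also the paper uses surjections from finite free modules and \cite[Theorem 8.1.3(xi)]{perez2010locally} where you use projectivity and a splitting --- both work, since $\Omega^i_{U_n}$ is finite projective over the affinoid algebra by smoothness.

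Your first, ``global'' argument has a genuine gap that you correctly anticipate: Kiehl's Theorem A for quasi-Stein spaces gives that global sections generate each stalk, but it does \emph{not} give a surjection $\cO_X^N\twoheadrightarrow\Omega^i_X$ with $N$ \emph{finite}. Finite global generation of a vector bundle on a Stein space is a Forster--Ramspott-type statement whose rigid-analytic analogue you would have to supply; without it the splitting over $\cO(X)$ (and hence the realization of $\Omega^i(X)$ as a closed direct summand of a finite power of $\cO(X)$) does not get off the ground. Since the affinoid-local route sidesteps this entirely, you should treat that as the actual proof and drop, or properly source, the global-generation step.
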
 

\begin{proof}
	If $X=\bA^n$, it is enough to show the claim for $i=0$. Write $\bA^n=\cup_k \bB(0,p^k)$ as union of closed disks of radius $p^k$, it suffices to see the transition maps $f_k: \cO(\bB (0,p^k))\to \cO(\bB(0,p^{k-1}))$ are compact morphisms \cite[Proposition 19.9]{schneider2013nonarchimedean}. We can easily reduce to the case $k=1$ and it suffices to see for any $\delta\in p^{\bQ}$ that is greater than $1$, the map
	$$j_\delta: T_n(\delta):=K\langle \delta^{-1} X_1,...,\delta^{-1} X_n\rangle \ \longrightarrow K\langle T_1,...,T_n\rangle=: T_n$$
	is a compact morphism. Denote their Gauss norm by $|\cdot|_{\delta}$ and $|\cdot|_1$ respectively. Let $L'\subseteq T_n$ be any open lattice and $L=\{f\in T_n(\delta): |f|_{\delta}\leq 1\}$, then there's some $s$ such that $B_s:=\{f\in T_n: |f|_1\leq \delta^{-s}\}\subseteq L'$. For $f=\sum_I a_IX^I\in L$ with $a_I=0$ when $|I|\leq s$, we have $|f|_1\leq \delta^{-s}|f|_\delta\leq \delta^{-s}$, i.e $f\in B_s$. So $j_\delta(L)\subseteq L'+\sum_{|I|\leq s} \cO_K x^I$ as desired.
	
	For general $X$, we choose a Stein cover $\{X_n\}_n$ of $X$ by rigid affinoids. Then each $X_n\subseteq X_{n+1}$ can be represented by $T_m(\delta)/I\to T_m/IT_m$ for some $m$ and $\delta>1$. Since $\Omega^i(X_n)$ (resp. $\Omega^i(X_{n+1})$) is a finitely presented module of $T_m$ (resp. $T_m(\delta))$ (affinoid algebras are Noetherian), the result follows from \cite[Theorem 8.1.3(xi)]{perez2010locally} and computations above.
	
		\end{proof}

  Recall the following terminology in the theory of rigid analytic varieties: a naive interior of a smooth (dagger) affinoid variety is a Stein subvariety whose complement is open and quasi-compact. Although a naive interior itself is not quasi-compact, it has finite dimensional de Rham cohomology (\cite[Theorem A]{finitederham}). It is easy to see that, for a pair of dagger affinoids $X_1 \Subset X_2$ there exists a naive interior $X_2^0 \subset X_2$ such that $X_1 \subset X_2^0 \subset X_2$. We will say that $X_2^0$ is adapted to $X_1$.
  \bigskip
  
\begin{theorem} \label{theorem1}  
	
		(1)  Let $\{X_n\}$ be a strictly increasing open cover of $X$ by adapted naive interior of dagger affinoids. For $i,j\in \bZ$, $R^1\lim_nH^i_{\proet}(X_n,\bQ_p(j))=0$ and 
			$$H^i_{\proet}(X,\bQ_p(j))\cong\lim_n H^i_{\proet}(X_n,\bQ_p(j)).$$
		(2) For $j\in \bZ$, the cohomology groups of $R\Gamma_{\proet}(X,\bQ_p(j))$ are nuclear Fr\'echet.

\end{theorem}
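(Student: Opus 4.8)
The plan is to reduce everything to an analysis of the tower $\{R\Gamma_{\proet}(X_n,\bQ_p(j))\}_n$ followed by a Mittag--Leffler argument, the point of passing to naive interiors being that their de Rham and Hyodo--Kato cohomologies are finite-dimensional. Since $X$ is partially proper and $\{X_n\}$ is an increasing admissible cover by Stein opens, pro-\'etale cohomology commutes with this exhaustion, giving $R\Gamma_{\proet}(X,\bQ_p(j))\simeq R\lim_n R\Gamma_{\proet}(X_n,\bQ_p(j))$ in $D(\bQ_{p,\square})$ and hence, for every $i$, a Milnor short exact sequence
\[ 0\to R^1\!\lim_n H^{i-1}_{\proet}(X_n,\bQ_p(j))\to H^i_{\proet}(X,\bQ_p(j))\to \lim_n H^i_{\proet}(X_n,\bQ_p(j))\to 0. \]
Thus it suffices to prove that for all $i$ the system $\{H^i_{\proet}(X_n,\bQ_p(j))\}_n$ has vanishing $R^1\lim$ and nuclear Fr\'echet $\lim$.

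For a fixed naive interior $X_n$ I would study $H^i_{\proet}(X_n,\bQ_p(j))$ through the Hochschild--Serre spectral sequence $E_2^{a,b}=H^a(G_K,H^b_{\proet}(X_{n,C},\bQ_p(j)))\Rightarrow H^{a+b}_{\proet}(X_n,\bQ_p(j))$, which degenerates at $E_3$ since $\operatorname{cd}_p(G_K)=2$. The geometric syntomic comparison (Theorem~\ref{theoremclassicalperiod}(1), and its analogue for a general twist $\bQ_p(j)$, built from the truncated de Rham and Hyodo--Kato complexes) exhibits $H^b_{\proet}(X_{n,C},\bQ_p(j))$ as an extension of a ``Hyodo--Kato part'' by a ``de Rham part'', the latter a Tate-twisted quotient of spaces of differential forms on $X_{n,C}$. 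Because $X_n$ is a naive interior, $H^*_{\dR}(X_{n,C})$ and $H^*_{\HK}(X_{n,C})$ are finite-dimensional (\cite{finitederham}, \cite{achinger2025compactlysupportedpadicproetale}): finiteness of de Rham cohomology forces the relevant differential to have closed image, so the de Rham part is a nuclear Fr\'echet $C$-vector space (Lemma~\ref{lemmafundanuclearfrechet}), while finiteness of Hyodo--Kato cohomology makes the Hyodo--Kato part a finite-dimensional $\bQ_p$-vector space. Passing to $G_K$-cohomology, $H^a(G_K,-)$ of the Hyodo--Kato part stays finite-dimensional (Tate), and $H^a(G_K,-)$ of the de Rham part is computed explicitly from $R\Gamma(G_K,C(m))\simeq K\oplus K[-1]$ for $m=0$ and $\simeq 0$ otherwise, together with flatness of Fr\'echet spaces for $\otimes^\square_K$ (Proposition~\ref{propositionlimitsolidtensorexchange}) and compatibility of condensed group cohomology with solid tensor products: it is a finite direct sum of nuclear Fr\'echet $K$-vector spaces of the shape $\Omega^{b-1}(X_n)/\ker d$. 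Assembling along the Hochschild--Serre filtration, $H^i_{\proet}(X_n,\bQ_p(j))$ acquires a finite, functorial-in-$n$ filtration whose graded pieces are either finite-dimensional $\bQ_p$-vector spaces or nuclear Fr\'echet $K$-vector spaces of de Rham type; in particular each $H^i_{\proet}(X_n,\bQ_p(j))$ is nuclear Fr\'echet over $\bQ_p$.

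To finish I would pass to the limit over $n$ graded piece by graded piece. An inverse system of finite-dimensional vector spaces is automatically Mittag--Leffler, hence has vanishing $R^1\lim$ and finite-dimensional $\lim$. For the de Rham graded pieces, the restriction maps $\Omega^{b-1}(X_{n+1})\to\Omega^{b-1}(X_n)$ have dense image because $X_n\Subset X_{n+1}$; running this through the explicit description above (and absorbing the finite-dimensional corrections) shows the transition maps of the de Rham graded pieces have dense image, whence $R^1\lim=0$ and the limit is nuclear Fr\'echet, by the topological Mittag--Leffler criterion for reduced countable inverse limits of nuclear Fr\'echet spaces. Combining along the finite filtration via the long exact sequence of $R\lim$ and the vanishing of $R^{\geq 2}\lim$ for countable systems yields $R^1\lim_n H^i_{\proet}(X_n,\bQ_p(j))=0$ and $\lim_n H^i_{\proet}(X_n,\bQ_p(j))$ nuclear Fr\'echet; the translation between classical and solid Fr\'echet spaces is via Lemma~\ref{lemmausualtocondensed}. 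Together with the Milnor sequence, this proves both (1) and (2).

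The main obstacle is the middle step: producing the decomposition of $H^i_{\proet}(X_n,\bQ_p(j))$ into a finite-dimensional part and a nuclear Fr\'echet de Rham part \emph{functorially in $n$}, and carrying out the attendant topological bookkeeping --- strictness of the Hochschild--Serre differentials and of the connecting maps in the syntomic and Galois-cohomology long exact sequences, and Hausdorffness of every subquotient --- so that all of this is legitimate in the quasi-abelian category of locally convex spaces. The finiteness of the de Rham and Hyodo--Kato cohomology of the naive interior is used throughout (finite-dimensional subspaces and quotients of Hausdorff spaces are closed, so the open mapping theorem applies to give strictness), and the density of the de Rham transition maps, which is precisely what forces $R^1\lim$ to vanish, is where the hypothesis that the $X_n$ are \emph{adapted} naive interiors enters.
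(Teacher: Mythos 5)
Your overall strategy coincides with the paper's: Hochschild--Serre degenerating at $E_3$, the geometric comparison sequence splitting each geometric cohomology group into a Hyodo--Kato part and a differential-forms part, an explicit Tate-twist computation for the latter, and topological Mittag--Leffler for the tower $\{\Omega^{b-1}(X_n)\}_n$. However, one step is wrong as written: you assert that finiteness of $H^b_{\HK}(X_{n,C})$ makes the Hyodo--Kato part a finite-dimensional $\bQ_p$-vector space, and then invoke ``Tate'' for finiteness of its Galois cohomology. The Hyodo--Kato part is $(H^b_{\HK}(X_{n,C})\otimes^\square_{\check{C}}\hat{B}^+_{\st})^{\varphi=p^b,N=0}$, a Banach--Colmez space which is in general infinite-dimensional over $\bQ_p$ as soon as positive Frobenius slopes occur (already $(\hat{B}^+_{\st})^{\varphi=p,N=0}$ surjects onto $\bC_p$); so classical Tate finiteness for $p$-adic representations does not apply to it.

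The conclusion you need --- that $H^a(G_K,\cdot)$ of this part is finite-dimensional over $\bQ_p$ --- is still true, but the required input is Fontaine's theory: the Banach--Colmez space in question is an almost $\bC_p$-representation by \cite[Theorem A]{fontaine2019almost}, and Galois cohomology of almost $\bC_p$-representations is finite-dimensional by \cite[Theorem 6.1]{katopresque}. This is exactly the substitution the paper makes, and it is a genuinely necessary input distinct from Tate's theorem; note that the finiteness of the de Rham (hence Hyodo--Kato) cohomology of the naive interior is what makes this applicable. (A secondary, harmless imprecision: for the de Rham part $\Omega^{b-1}(X_n)/\ker(d)$ to be nuclear Fr\'echet you only need $\ker(d)$ closed, which is automatic by continuity of $d$; closedness of $\operatorname{im}(d)$ is not what is at stake here.) With the Fontaine input in place, the remainder of your argument --- the filtration bookkeeping on the $E_3$-page and the Mittag--Leffler passage to the limit --- matches the paper's proof.
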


\begin{proof}
	We have the Hochschild-Serre spectral sequence \cite[(4.5)]{curves} 
	\begin{equation}\label{spectralsequenceHS}
    E_{2,n}^{a,b}=H^a(G_K,H^b_{\mathrm{proet}}(X_{n,C},\bQ_p(j)))\Rightarrow H^{a+b}_{\mathrm{proet}}(X_n,\bQ_p(j)).
    \end{equation}
	and that $E_{2,n}^{a,b}=0$ when $a>2$.  The spectral sequence degenerates on $E_3$ page and we have 
	$$E_{3,n}^{a,b}=
	\begin{cases}
		\ker(d_{0,b}), & a=0\\
		E_{2,n}^{1,b}, & a=1 \\
		\coker(d_{0,b+1}), & a=2
		\end{cases}
		$$
		where $d_{a,b}$ is the differential map from $E_{2,n}^{a,b}$ to $E_{2,n}^{a+2,b-1}$.

	 Set $s=j-b$ and write the $s$-th Tate twist $(-)\otimes \bQ_p(s)$ as $(-)(s)$. Apply Galois cohomology to \eqref{exactsequence1} and denote $HK^b(X_C,r):=(H^b_{\HK}(X_C)\otimes^\square_{\check{C}} \hat{B}_{\st}^+)^{\varphi=p^i, N=0}$. We a get long exact sequence  
	$$ \begin{aligned}
		\to & H^{a-1}(G_K,HK^b_{}(X_{n,C},b)(s))\stackrel{\partial_{a-1}}\longrightarrow H^a(G_K,(\Omega^{b-1}(X_{n,C})/\ker{d})(s))\to \\
		& E_{2,n}^{a,b}\to H^a(G_K,HK^b(X_{n,C},b)(s))\stackrel{\partial_a}\longrightarrow H^{a+1}(G_K,\Omega^{b-1}(X_{n,C})(s))\to.
	\end{aligned}
	$$ 
	So we have a short exact sequence
	\begin{equation}\label{E2}
		0\to \coker(\partial_{a-1})\to E_{2,n}^{a,b}\to \ker(\partial_a)\to 0.
	\end{equation}
	Since $HK^b_{}(X_{n,C},b)(s)$ is a Banach-Colmez space (Hyodo-Kato cohomology has finite rank by Hyodo-Kato isomorphism with de Rham cohomology),  we claim that the groups $H^a(G_K,HK^b(X_{n,C},b)(s))$ are finite rank over $\bQ_p$, so is $\ker(\partial_a)$. This follows from \cite[Theorem 6.1]{katopresque} and \cite[Theorem A]{fontaine2019almost}: the second theorem tells the Galois representation on this Banach-Colmez space is an almost $\bC_p$-representation and the first theorem gives that Galois cohomology is finite dimensional. 
	Using generalized Tate's isomorphism \cite[Proposition 3.15]{curves}  and $\Omega^{b-1}(X_{n,C})/\ker(d)\cong \Omega^{b-1}(X_n)/\ker(d)\otimes_K^\square C$, we want to show
	\begin{equation} H^a(G_K,\Omega^{b-1}(X_{n,C})/\ker(d)(s))=\begin{cases}
		\Omega^{b-1}(X_{n})/\ker(d)  & a=0,1;b=j\\
		0 & others.
	\end{cases}\label{differential}\end{equation}
	are nuclear Fr\'echet spaces. It follows from Lemma \ref{lemmafundanuclearfrechet} and the fact that $ker(d)$ is a closed subspace.\par 
		
	Therefore, we know $E_{2,n}^{a,b}, E_{3,n}^{a,b}$ are finite dimensional $\bQ_p$-vector spaces except when $a=0,1;b=j$. In fact, $E_{3,n}^{a,b}$ are nuclear Fr\'echet spaces. We can assume $a=0,1;b=j$ but in this case $E_{2,n}^{a,b}$ is an extension of a finite dimensional $\bQ_p$-vector space by a nuclear Fr\'echet space, which is nuclear Fr\'echet (it splits) and $\ker(d_{0,j})$ is a closed subspace of $E^{0,j}_{2,n}$.
	
	  To see the first claim of the theorem, it suffices to show that $R^1\lim_n E_{3,n}^{a,b}=0$ and for that we may assume that $a=0,1;b=j$. Note that the target of $d_{0,j}$ is a finite dimensional $\bQ_p$-vector space. Hence we are reduced to showing vanishing of $R^1\lim_n$ for $E_{2,n}^{a,b}$ and then \eqref{E2} allows us further reduce to showing the vanishing for $\coker(\partial_{a-1})$ since $\ker(\partial_{a})$ is finite dimensional. For that, it suffices to show that $R^1\lim_n\Omega^{b-1}(X_{n})=0$,
          but this follows from that $X_{n}$ comes from a Stein cover and $\Omega^{b-1}$ are coherent sheaves and we have topological Mittag-Leffler condition (these are Fr\'echet spaces).
          
          For the second claim, we first show that $H^i_{\proet}(X_n,\bQ_p(j))$ are nuclear Fr\'echet spaces. The spectral sequence \eqref{spectralsequenceHS} defines a decreasing filtration $F^p$ on $H^i_{\proet}(X_n,\bQ_p(j))$ whose graded pieces are $E_{3,n}^{0,i}, E_{3,n}^{1,i-1}$ and $E_{3,n}^{2,i-2}$. By the computation above, only one of these three spaces is possibly not finite dimensional over $\bQ_p$ and in any cases they are all nuclear Fr\'echet. Hence $H_{\proet}^i(X_n,\bQ_p(j))$ is finite dimensional or can be written as an extension of a nuclear Fr\'echet space by a finite dimensional space ($E^{2,i-2}_{3,n}$ is always finite dimensional), which is in fact nuclear Fr\'echet. Only the second case needs to be clarified: Assume we have an extension 
          \begin{equation}\label{extfrechet}
          0\to V\to H_{\proet}^i(X_n,\bQ_p(j))\stackrel{q_1}\to F_1\to 0
          \end{equation}
          where $V$ is a finite dimensional $\bQ_p$-vector space and $F_1$ is a solid nuclear Fr\'echet space. It suffices to show $H_{\proet}^i(X_n,\bQ_p(j))$ is a Fr\'echet space because then \eqref{extfrechet} splits by Lemma \ref{lemmausualtocondensed} and \cite[Lemma 2.5]{curves}.
          Since $R\Gamma_{\proet}(X_{n},\bQ_p(j))^{cl}$ can be represented by a complex of Fr\'echet spaces, there exists an exact sequence
          $$ F_3\to F_2\stackrel{q_2}\to H_{\proet}^i(X_n,\bQ_p(j))\to 0$$
          where $F_2, F_3$ are solid Fr\'echet space. Let $q_3=q_1\circ q_2$, then $q_3$ is induced by a continuous surjective map of Fr\'echet spaces. By open mapping theorem $\ker(q_3)$ is a closed subspace of $F_2$. From the exact sequence $0\to \ker(q_2)\to \ker(q_3)\to V\to 0$, we see $\ker(q_2)$ is a closed subspace of $\ker(q_3)$, thus is closed in $F_2$. It follows that $H_{\proet}^i(X_n,\bQ_p(j))$ is a Fr\'echet space.
          The second claim now follows from the first claim and that (countable) projective limit of nuclear Fr\'echet space is nuclear Fr\'echet \cite[Corollary 3.5.7, Theorem 8.5.7(iii)]{perez2010locally}. 
                    
\end{proof}

\begin{remark} \label{remarkcompactoffrechet}
	 In fact, we can see the transition morphisms $$H_{\proet}^i(X_{n+1},\bQ_p(j))\to H_{\proet}^i(X_n,\bQ_p(j))$$ are compact. By the proof of the last theorem and Lemma \ref{lemmaextendcompact}, this is reduced to $E_{2,n}^{a,b}$ with $a=0,1$ and $b=j$. So it is reduced to transition maps between $\Omega^{b-1}(X_{n})/ker(d)$ and further to $\Omega^{b-1}(X_n)$. We can choose rigid affinoid $X_n^0$ such that $X_n\subset X_n^0\Subset X_{n+1}$ then morphism factors as $\Omega^{b-1}(X_{n+1})\stackrel{f}\to \Omega^{b-1}(X^0_n)\to \Omega^{b-1}(X_n)$. Now $f$ is compact because $\Omega^{b-1}(X_{n+1})$ is nuclear (\cite[Proposition 19.5]{schneider2013nonarchimedean}).

\end{remark}

\subsection{Arithmetic compactly supported pro-\'etale cohomology}

The second step is to deal with compactly supported cohomology. Spaces of compact type has the property that when we take dual of spaces of compact type, no higher extension groups appear:

\begin{lemma}\label{lemmacompacttypedual}
  	Assume $V$ is a $K$-vector space of compact type, then $R\Hom_K(V,K)\simeq V^*$. 
  \end{lemma}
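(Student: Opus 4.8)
The plan is to reduce everything to the case of Banach spaces via the defining presentation of $V$ as an inductive limit, and then reassemble through an inverse limit. Concretely, write $V=\colim_n V_n$ as a locally convex inductive limit of $K$-Banach spaces $V_n$ with injective compact transition maps, as in Definition~\ref{definitioncompacttype}. First I would record that this presentation is preserved after passing to solid $K$-vector spaces, i.e. $\underline{V}\simeq\colim_n\underline{V_n}$ in $D(K_\square)$; this follows from the compatibility results used throughout \cite{curves} (e.g. the discussion around Lemma~\ref{lemmausualtocondensed} together with closure of $C_c(C_K)$ under countable colimits). Since $R\Hom_K(-,K)$ turns colimits into limits, one then gets $R\Hom_K(V,K)\simeq R\lim_n R\Hom_K(\underline{V_n},K)$, and the problem splits into two independent pieces: the Banach case, and control of the $R\lim$.

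For the Banach case I would use that for a $K$-Banach space $W$ (which we may take orthonormalizable, $W\cong c_0(I)$), the complex $R\Hom_K(\underline{W},K)$ is concentrated in degree $0$ and computes the Banach dual $W^*$; this is a standard fact about solid $p$-adic Banach spaces (cf. \cite{bosco2023rational}, \cite{curves}), the point being that there is no room for higher internal $\Ext$ into $K$. Granting this, $R\Hom_K(V,K)\simeq R\lim_n\underline{V_n^*}$, and the dualized transition maps $V_{n+1}^*\to V_n^*$ are again compact (Lemma~\ref{lemmacomposition} together with duality of compact operators). By the properties of spaces of compact type recalled after Definition~\ref{definitioncompacttype}, the Banach duals assemble into the strong dual $V^*=\lim_n V_n^*$, a Fr\'echet space; in particular $\lim_n\underline{V_n^*}\simeq\underline{V^*}$.

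It then remains to prove $R^1\lim_n\underline{V_n^*}=0$, and this is where the compact-type hypothesis is genuinely used. Here I would argue that the tower $(\underline{V_n^*})$ --- a countable inverse system of solid $K$-Banach spaces whose limit realizes the Fr\'echet space $V^*$ --- satisfies the (topological) Mittag--Leffler condition, using that the transition maps are compact and that $V^*$ is complete, so that $R\lim_n\underline{V_n^*}$ is concentrated in degree $0$; statements of this type are established in \cite{curves}/\cite{bosco2023rational}. Combining the three steps gives $R\Hom_K(V,K)\simeq\underline{V^*}$ in degree $0$, i.e. $R\Hom_K(V,K)\simeq V^*$. The main obstacles are the two purely functional-analytic inputs --- the vanishing of higher internal $\Ext$ from a solid Banach space into $K$, and the $R^1\lim$-vanishing for the dual tower --- while everything else is formal manipulation of (co)limits in $D(K_\square)$.
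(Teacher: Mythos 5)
Your overall skeleton is the same as the paper's --- write $V$ as a countable filtered colimit, turn $R\Hom_K(-,K)$ into an $R\lim$, identify the termwise duals, and kill $R^1\lim$ --- but you run it on the Banach presentation $V=\colim_n V_n$, whereas the paper first passes to a presentation $V=\colim_n S_n$ by \emph{Smith} spaces (using that an $LB$-space of compact type is an $LS$-space of compact type, \cite[Cor.~3.38]{rodrigues2022solid}). That choice is not cosmetic: Smith spaces are compact projective objects in solid $K$-vector spaces, so $R\Hom_K(S_n,K)\simeq\Hom_K(S_n,K)$ holds for free and the only derived issue left is the $R\lim$, which is handled by topological Mittag--Leffler because the duals $\Hom_K(S_n,K)$ are Banach spaces with dense transition images. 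Your route instead needs the separate input that $R\Hom_K(\underline{W},K)$ is concentrated in degree $0$ for $W$ Banach; this is true (it is the Banach--Smith duality of \cite[Lemma~3.10]{rodrigues2022solid}), but note two imprecisions in how you use it. First, the internal Hom computes the Smith dual $W'_c$ (compactoid-convergence topology), not the strong dual $W^*$; for an infinite-dimensional Banach space these differ as topological vector spaces, and they only get identified after passing to the limit, where Lemma~\ref{lemmasametopologyofdual} gives $V^*\cong V'_c$ for $V$ of compact type. Second, and for the same reason, your dual tower $(R\Hom_K(\underline{V_n},K))_n$ is a tower of \emph{Smith} spaces, not of Banach spaces, so the Mittag--Leffler step cannot be quoted verbatim for a Banach tower; you would need either the version for compact projective systems or a further replacement by an equivalent Banach tower with dense transition maps (exactly the manoeuvre the paper performs in Corollary~\ref{corollarytopologydagger}, via \cite[discussion after Prop.~16.5]{schneider2013nonarchimedean}). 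With those two points repaired your argument closes; the paper's Smith-space presentation simply packages both difficulties into one projectivity statement.
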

  
  \begin{proof}
  	In the terminology of \cite{rodrigues2022solid}, $V$ is a $LB$ space of compact type hence also an $LS$ space of compact type (\cite[Corollary 3.38]{rodrigues2022solid}). We write $V=\mathrm{colim}_n S_n$ as inductive limit of Smith spaces with injective trace class transition maps. So 
  	$$\begin{aligned}
  	& R\Hom_K(V,K)\simeq R\Hom_K(\mathrm{colim}_n(S_n),K)\simeq R\lim_n R\Hom_K(S_n,K)\\
  	 & \simeq R\lim_n \Hom_K(S_n,K)\simeq \lim_n \Hom_K(S_n,K)
  	\end{aligned}$$
  	where the third quasi-isomorphism follows as $S_n$ are projective objects. For the forth one, we apply topological Mittag-Leffler (\cite[Lemma A.37]{bosco2021p}). By duality between Banach and Smith spaces (\cite[Lemma 3.10]{rodrigues2022solid}), it is enough to see transition maps have dense images, which is \cite[Lemma 3.19]{rodrigues2022solid}. Now the claim follows from the equivalence between classical and condensed Banach (Smith) spaces (\cite[Proposition 3.5]{rodrigues2022solid}). 
  
  \end{proof}

\begin{theorem} \label{theorem2}
	The cohomology groups of $R\Gamma_{\proet,c}(X,\bQ_p(j))$ are of compact type.
\end{theorem}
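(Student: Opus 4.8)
The plan is to mirror the structure of the proof of Theorem \ref{theorem1}, but now running the Hochschild--Serre spectral sequence and the syntomic comparison in the compactly supported setting, and to exploit the duality between nuclear Fr\'echet spaces and spaces of compact type. First I would recall that, by Poincar\'e duality for the geometric cohomology (or directly from the definition \eqref{formuladefofcompactlysupport} together with the comparison Theorem \ref{theoremclassicalperiod}(2)), the geometric compactly supported cohomology groups $H^b_{\proet,c}(X_C,\bQ_p(j))$ are built out of the dual-type objects $H^{b}DR_c$ and $H^bHK_c$. The Hyodo--Kato piece $H^bHK_c(X_C,b)(s)$ is again a Banach--Colmez space (compactly supported Hyodo--Kato cohomology has finite rank, being dual to ordinary Hyodo--Kato cohomology via the duality used in \cite{geometricdual}), so by \cite[Theorem A]{fontaine2019almost} it is an almost $\bC_p$-representation and its $G_K$-cohomology is finite-dimensional by \cite[Theorem 6.1]{katopresque}. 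The de Rham piece is controlled by $H^{b-1}DR_c(X_C,b)(s)$, which after passing to $B^+_{\dR}/F^b$ is, up to finite-dimensional contributions, of the shape $\Omega^{?}_c(X_C)/\operatorname{Im}(d)$; these are the compactly supported analogues of the $\Omega^i(X_n)/\ker(d)$ appearing before, and they are of compact type because compactly supported de Rham cohomology of a Stein space is computed by a colimit (over quasi-compact opens) whose transition maps are compact --- equivalently, they are strong duals of nuclear Fr\'echet spaces.

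Concretely I would proceed as follows. Fix a strictly increasing cover $\{X_n\}$ by adapted naive interiors of dagger affinoids as in Theorem \ref{theorem1}. By definition \eqref{formuladefofcompactlysupport}, $R\Gamma_{\proet,c}(X,\bQ_p(j))$ sits in a fiber sequence with $R\Gamma_{\proet}(X,\bQ_p(j))$ and $R\Gamma_{\proet}(\partial X,\bQ_p(j))=\colim_n R\Gamma_{\proet}(X\setminus U_n,\bQ_p(j))$ for quasi-compact opens $U_n$; dually, one expects $R\Gamma_{\proet,c}(X,\bQ_p(j))\simeq R\lim$ or $\colim$ of the compactly supported cohomologies attached to the $X_n$, and in any case the geometric $H^b_{\proet,c}(X_{n,C},\bQ_p(j))$ are expressed by \eqref{exactsequence2} as extensions involving a Banach--Colmez space and a space of compact type. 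Running the Hochschild--Serre spectral sequence $E_{2,n}^{a,b}=H^a(G_K,H^b_{\proet,c}(X_{n,C},\bQ_p(j)))$, the same argument as in Theorem \ref{theorem1} --- using \cite[Theorem 6.1]{katopresque} and \cite[Theorem A]{fontaine2019almost} for the Hyodo--Kato part, and generalized Tate twisting \cite[Proposition 3.15]{curves} for the de Rham part --- shows that every $E_{2,n}^{a,b}$ is finite-dimensional over $\bQ_p$ except in one bidegree, where it is an extension of a finite-dimensional space by a space of compact type, hence of compact type (the extension splits, \cite[Lemma 2.5]{curves}; and $C_c(C_K)$ is stable under such extensions and closed subspaces/quotients, as recorded after Definition \ref{definitioncompacttype}). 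Therefore each $H^i_{\proet,c}(X_n,\bQ_p(j))$ is of compact type, being a finite iterated extension of finitely many such objects.

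It then remains to pass to the limit/colimit over $n$ and conclude. Since the relevant non-finite pieces are the compact-type objects $\Omega^{b-1}_c(X_{n,C})/\operatorname{Im}(d)$, whose transition maps (now going in the direction of increasing $n$, by functoriality of compactly supported cohomology for open immersions) are injective and compact --- exactly as in Remark \ref{remarkcompactoffrechet} but dualized --- the relevant $\colim_n$ is a countable filtered colimit along injective compact transition maps of spaces of compact type, hence again of compact type (the category $C_c(C_K)$ is closed under countable direct sums and the colimit is a quotient of a direct sum, cf. the discussion after Definition \ref{definitioncompacttype}). Combined with the degeneration of Hochschild--Serre on the $E_3$-page and the fact that extensions and closed subquotients of compact-type spaces are of compact type, this yields that $H^i_{\proet,c}(X,\bQ_p(j))$ is of compact type. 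The main obstacle I anticipate is bookkeeping the direction of the transition maps and the correct limit: unlike the Fr\'echet case where one takes $R\lim_n$ and invokes Mittag--Leffler, here compactly supported cohomology is covariant in open immersions, so one must argue with a colimit and verify the relevant $R\lim^1/\colim$-exactness and the compactness of transition maps on the nose (via Lemma \ref{lemmacomposition} and Lemma \ref{lemmaextendcompact}), together with checking that the syntomic exact sequence \eqref{exactsequence2} is compatible with these transition maps --- this compatibility, and the identification of the compactly supported de Rham/Hyodo--Kato groups as duals of their ordinary counterparts, is where the real work lies.
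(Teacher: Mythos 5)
Your proposal follows essentially the same route as the paper: Hochschild--Serre for compactly supported cohomology degenerating at $E_3$, the comparison sequence \eqref{exactsequence2} splitting the analysis into a Hyodo--Kato part (finite-dimensional Galois cohomology via almost $\bC_p$-representations) and a de Rham part (compact type via duality with nuclear Fr\'echet spaces), then passage to the colimit over adapted naive interiors with compact transition maps. The one substantive step you flag but do not carry out is exactly the crux of the paper's de Rham argument: to see that $H^d_c(X_n,\Omega^k)/\im(d_c)$ is of compact type one must show $\im(d_c)$ is \emph{closed}, which the paper does by dualizing the strict exact sequences $0\to\ker(d)\to\Omega^{d-k}(X_n)\to\im(d)\to 0$ of nuclear Fr\'echet spaces (anti-equivalence with compact type spaces) and applying Serre duality, the point being that $\im(d_c)=\im(d)^*$ is closed because $\ker(d)^*$ is Hausdorff; a filtration by $F^kK$ is also needed to isolate this piece from the almost-$\bC_p$ contributions. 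Two smaller discrepancies: the paper concludes from compactness of the transition maps that the colimit is nuclear reflexive with nuclear Fr\'echet strong dual (hence of compact type), rather than your ``quotient of a countable direct sum'' argument, which as stated does not by itself give compact type without injectivity and closedness of the relevant kernel; and the compactness of the transition maps on $H^d_c(X_n,\Omega^j)$ is obtained by factoring through the Smith-space dual of $\Omega^{d-j}(Y_n)$ for an intermediate affinoid $X_n\subseteq Y_n\Subset X_{n+1}$, i.e.\ genuinely by Serre duality rather than by formally ``dualizing'' Remark \ref{remarkcompactoffrechet}.
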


\begin{proof}
	Let $\{X_n\}_{n\in \bN}$ be a strictly increasing open covering of X by adapted naive interiors of dagger affinoids. We show that $H^i_{\proet.c}(X_n,\bQ_p(j))$ are of compact type and the natural morphisms $f_n: H^i_{\proet.c}(X_n,\bQ_p(j))\to H^i_{\proet.c}(X_{n+1},\bQ_p(j))$ are compact morphisms. It implies  $H^i_{\proet,c}(X,\bQ_p(j))$ is a nuclear reflexive space whose strong dual is a nuclear Fr\'echet space. Therefore, it is a space of compact type. \par 
	We have the Hochschild-Serre spectral sequence of compactly supported pro-\'etale cohomology
	\begin{equation}
		E_{2,n}^{a,b}=H^a(G_K,H^b_{\proet,c}(X_{n,C},\bQ_p(j)))\Rightarrow H^{a+b}_{\proet,c}(X_n,\bQ_p(j))
	\end{equation}
	and $E^{a,b}_{2,n}=0$ when $a>2$. This spectral sequence degenerate on page $E_3$ and we have 
	$$E_{3,n}^{a,b}=
	\begin{cases}
		\ker(d_{0,b}), & a=0\\
		E_{2,n}^{1,b}, & a=1 \\
		\coker(d_{0,b+1}), & a=2.
		\end{cases}
		$$
		Consider the Galois equivariant exact sequence \eqref{exactsequence2} and twist it by $s:=j-b$, we get:
		\begin{equation*}
		\begin{aligned}
		\begin{CD}
			HK^{b-1}_{c}(X_{n,C},b)(s)&@>\phi_{b-1}(s)>>& H^{b-1}DR_{c}(X_{n,C},b)(s)&\longrightarrow H^b_{\proet,c}(X_{n,C},\bQ_p(j))\longrightarrow \\ HK^b_{c}(X_{n,C},b)(s)&@>\phi_b(s)>>& H^bDR_{c}(X_{n,C},b)(s)&
			\end{CD}
			\end{aligned}
		\end{equation*}
	Hence there is an exact sequence
	\begin{equation*}
		0\to \coker(\phi_{b-1})(s)\longrightarrow H^b_{\proet,c}(X_{n,C},\bQ_p(j))\longrightarrow \ker(\phi_b)(s)\to 0.
	\end{equation*}
	Take Galois cohomology one gets a long exact sequence
	\begin{equation}
	\begin{aligned}
		\to H^{a-1}(G_K,\ker(\phi_{b})(s))\stackrel{\delta_{a-1,b}}\longrightarrow H^{a}(G_K,\coker(\phi_{b-1})(s))\to E_{2,n}^{a,b}\to \\
		H^{a}(G_K,\ker(\phi_{b})(s))\stackrel{\delta_{a,b}}\longrightarrow H^{a}(G_K,\coker(\phi_{b-1})(s))\to 
		\end{aligned}
\end{equation}

  Since $HK^b_{c}(X_{n,C},b)$ is a Banach-Colmez space, one has that $H^a(G_K,HK^b_{c}(X_{n,C},b))$ is a finite dimensional $\bQ_p$-vector space. Because $\im(\phi_b)$ is a Banach-Colmez space (recall $\phi_b$ is induced by the Hyodo-Kato morphism and almost $\bC_p$-representations form an abelian category), it implies $H^a(G_K,\ker(\phi_b))$ is a finite dimensional $\bQ_p$-vector space.
  
   Now we show $\coker(\delta_{a-1,b})$ is a space of compact type. It is enough to show it for\\ $H^{a}(G_K,\coker(\phi_{b-1})(s))$. Since $DR_c(X_{n,C},b)$ is concentrated in degree betweem $d$ and $2d$, $H^{b-1}DR_c(X_{n,C},b)=0$ unless $b\geq d+1$ and we will assume it. Recall that 
$$\begin{aligned}DR_{c}(X_{n,C},b)=(H^d_{c}(X,\cO_X)\otimes^\square_K (B^+_{\dR}/F^b)\to ...\to H^d_{c}(X,\Omega^{b-d-1})\otimes^\square_K (B^+_{\dR}/F^{d+1})\to  \\H^d_{c}(X,\Omega^{b-d})\otimes^\square_K (B^+_{\dR}/F^d)\to...\to H^d_c(X,\Omega^d)\otimes^\square_K B^+_{\dR}/F^{b-d}\to  0)[-d].\end{aligned}$$

To simplify the notation, for any integer $k\geq 0$, we denote 
$$F^kK:=(H^d_c(X_{n},\Omega^{b-d-1})(s)/\im(d_c)(s))\otimes^\square_K (F^kB^+_{\dR}/F^{d+1})$$
where $d_c$ denotes the differential operator in the compactly supported de Rham complex. It induces a decreasing filtration $F^k(\im(\phi_{b-1})(s))$ on $\im(\phi_{b-1})(s)$. Note that $F^k(\im\phi_{b-1}(s))$ is still an almost $\bC_p$-representations: It is the image of $ (H^{b-1}_{\HK,c}\otimes^\square_{\check{C}}  t^k \widehat{B}^+_{\st})^{\varphi=p^b,N=0}$, which corresponds to global section of the $\cO(-k)$-twist of the initial vector bundle, under the Hyodo-Kato morphism. Therefore, we have the following exact sequence
  \begin{equation}\label{formuladecomposecokernel}\begin{CD}
  	0\to  F^dK/F^d(\im(\phi_{b-1})(s))@>>>\coker(\phi_{b-1}(s))@>>>\displaystyle\frac{H_{dR,c}^{b-1}(X_n)\otimes^\square_K (B_{dR}^+/F^d)}{\im(\phi_{b-1}(s))/F^d}\to 0
  \end{CD}\end{equation}
  
Since $H_{\dR,c}^{b-1}(X_n)$ is a finite dimensional $K$-vector space and $\im(\phi_{b-1}(s))/F^d$ is an almost $\bC_p$-representation, we see that Galois cohomologies of the third term of the exact sequence \eqref{formuladecomposecokernel} are finite dimensional $\bQ_p$-vector spaces. Applying generalized Tate's formula to $F^dK$, we get that $H^a(G_K,\coker(\phi_{b-1}))$ is a finite dimensional $\bQ_p$-vector spaces unless $b=j+d$ and $a=0,1$.
  
  In this case, since $H^a(G_K,F^d(\im(\phi_{b-1})(s)))$ are finite dimensional $\bQ_p$-vector spaces, to show $H^a(G_K,\coker(\phi_{b-1}(s)))$ is of compact type it suffices to show it for 
  $$H^a(G_K,H^d_c(X_n,\Omega^{b-d-1})(s)\otimes_K^\square C(d)/\im(d_c\otimes C(d)))\cong H^d_c(X_n,\Omega^{b-d-1})/\im(d_c).$$

Let $k\geq 0$ be an integer. It suffices to show that $\im(d_c)\subseteq H^d_c(X_n,\Omega^k)$ is a closed subspace, because then $H^d_c(X_n,\Omega^k)/\im(d_c)$  is a space of compact type quotient by a closed subspace hence is a space of compact type. To show our claim, we consider the following classical (not condensed) spaces. There are strict exact sequences of nuclear Fr\'echet spaces where $d$ is the differential operator
   $$0\to \ker(d)\to H^0(X_n,\Omega^{d-k})\to \im(d)\to 0$$
   $$0\to \im(d)\to H^0(X_n,\Omega^{d-k+1})\to \coker(d)\to 0.$$
   Take strong duals of these sequences, we get strict exact sequences of spaces of compact type because the strong dual induces an anti-equivalence between nuclear Fr\'echet spaces and spaces of compact type \cite[Corollary 1.4]{schneider2002locally} and strictness of a map $f$ can be characterized categorically by $\im(f)\cong \coim(f)$. By applying Serre duality, we get strict exact sequences which factor the differential $d_c: H^d_c(X_n,\Omega^{k-1})\to H^d_c(X_n,\Omega^k)$. To see $\im(d_c)=\im(d)^*$ is a closed subspace, it suffices to observe that $\ker(d)^*$ is a Hausdorff space since $\ker(d)$ is a nuclear Fr\'echet space. \par 
    Therefore all spaces of $E_{2,n}^{a,b}$ are of compact type. From the description of $E_3$-page, we find $H_{\proet,c}^i(X_n,\bQ_p(j))$ is finite dimensional or can be written as an extension of a space of compact type by a finite dimensional space (it splits using Lemma \ref{lemmacompacttypedual}), which is of compact type.\\ \par 
It remains to show that transition maps $f_n$ are compact maps. By Lemma \ref{lemmaextendcompact}, it suffices to show that transition maps between $E_{3,n}^{a,b}$ are compact morphisms so it is enough to show it for the case $a=0,1$ and $b=j+d$. Use Lemma \ref{lemmaextendcompact} again, it suffices to show that the transition maps between $H^a(G_K,H^{b-1}DR_{c}(X_{n,C},b)(-d))$ are compact. Since $\im(H^d_c(X,\Omega^{j-2}))$ is a closed subspace, after applying \cite[Theorem 8.1.3(xi)]{perez2010locally}, it suffices to show transition maps between $H^d_{c}(X_{n},\Omega^j)$ are compact, after reindex $j-1$ by $j$. 
For this, let $Y_n$ be a rigid analytic affinoid such that $X_n\subseteq Y_n \Subset X_{n+1}$. So there are morphisms 
$$\Omega^{d-j}(X_{n+1})\to \Omega^{d-j}(Y_n)\to \Omega^{d-j}(X_n).$$
Taking compact dual of the diagram and use Lemma \ref{lemmasametopologyofdual}, we can applying Serre duality of Stein spaces to two ends of the diagram. Thus we get 
$$ H_c^d(X_n,\Omega^j)\stackrel{f_1}\longrightarrow (\Omega^{d-j}(Y_n))'_c\stackrel{f_2}\longrightarrow H_c^d(X_{n+1},\Omega^j) $$ 
which factors the transition map $f:H^d_{c}(X_{n},\Omega^j)\to H^d_{c}(X_{n+1},\Omega^j)$.
Because $\Omega^{d-j}(Y_n)$ is a Banach space, the dual is a Smith space hence is a compact object in the category solid $\bQ_p$-vector spaces. Since $H_c^d(X_{n+1},\Omega^j)$ is of compact type, $f_2$ factors as 
$$\Omega^{d-j}(Y_n)'_c \stackrel{f_2'}\longrightarrow W\longrightarrow H_c^d(X_{n+1},\Omega^j)$$ 
 where $W$ is a Banach space (Use fully faithfulness of \cite[Proposition 1.2(2)]{scholze2019lectures}). We need to show $f_2'f_1$ is a compact morphism but this follows from the fact that $H_c^d(X_n,\Omega^j)$ is nuclear (cf. \cite[Proposition 19.5]{schneider2013nonarchimedean}).
 
\end{proof}

\begin{corollary} \label{corollarytopologydagger}
	Let $X$ be a geometrically connected smooth dagger affinoid variety over $K$. Assume $\{X_n\}$ is a dagger presentation of $X$ and $X_n^0$ is a naive interiors of $X_n$ adapted to $\{X_n\}$. Then 
	\begin{enumerate}
		\item The cohomology group $H^i_{\proet}(X,\bQ_p(j))$ is a space of compact type.
		\item We have an isomorphism 
		$$H^i_{\proet,c}(X,\bQ_p(j))\cong \lim_n H^i_{\proet,c}(X_n^0,\bQ_p(j))$$
		 and $H^i_{\proet,c}(X,\bQ_p(j))$ is a nuclear Fr\'echet space.
	\end{enumerate} 
\end{corollary}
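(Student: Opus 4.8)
The plan is to reduce both claims to Theorems \ref{theorem1} and \ref{theorem2} applied to the naive interiors $X_n^0$ — which are themselves smooth geometrically connected Stein spaces — together with the fact that the (compactly supported) pro-\'etale cohomology of the dagger affinoid $X$ is computed by the presentation $\{X_n^0\}$. Throughout one uses that $X_{n+1}\subseteq X_n^0\subseteq X_n$ (as $X_n^0$ is adapted) and $X=\bigcap_n X_n$, so the chain $\{X_n^0\}$ is cofinal among quasi-compact neighbourhoods shrinking to $X$.

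For (1): by overconvergence of pro-\'etale cohomology for dagger affinoids, $R\Gamma_{\proet}(X,\bQ_p(j))\simeq \colim_n R\Gamma_{\proet}(X_n^0,\bQ_p(j))$, and since filtered colimits are exact this gives $H^i_{\proet}(X,\bQ_p(j))=\colim_n H^i_{\proet}(X_n^0,\bQ_p(j))$; by Theorem \ref{theorem1}(2) each term is nuclear Fr\'echet. The point I would then establish is that the transition maps are compact. Since $X_{n+1}$ is quasi-compact inside the Stein space $X_n^0$, there is a term $Z$ of a Stein exhaustion of $X_n^0$ with $X_{n+1}\Subset Z\subseteq X_n^0$; hence the restriction of differential forms $\Omega^k(X_n^0)\to\Omega^k(X_{n+1}^0)$ factors as $\Omega^k(X_n^0)\to\Omega^k(Z)\to\Omega^k(X_{n+1})\to\Omega^k(X_{n+1}^0)$ with the middle map an affinoid restriction along a relatively compact inclusion, hence compact as in Lemma \ref{lemmafundanuclearfrechet}, so the whole composite is compact by Lemma \ref{lemmacomposition}. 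Feeding this into the Hochschild--Serre spectral sequence exactly as in the proof of Theorem \ref{theorem1} and Remark \ref{remarkcompactoffrechet} — where the only possibly infinite-dimensional graded piece of $H^i_{\proet}(X_n^0,\bQ_p(j))$ is $\Omega^{j-1}(X_n^0)/\ker d$ — Lemma \ref{lemmaextendcompact} propagates compactness to the transition maps $H^i_{\proet}(X_n^0,\bQ_p(j))\to H^i_{\proet}(X_{n+1}^0,\bQ_p(j))$. A countable filtered colimit of nuclear Fr\'echet spaces along compact transition maps is a space of compact type (each compact map factors through a Banach space, so the colimit is re-presented by Banach spaces with compact transitions), which proves (1).

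For (2): by the construction of compactly supported cohomology of the dagger affinoid $X$ in \cite{achinger2025compactlysupportedpadicproetale}, $R\Gamma_{\proet,c}(X,\bQ_p(j))\simeq R\lim_n R\Gamma_{\proet,c}(X_n^0,\bQ_p(j))$, the inverse system being formed by the maps attached to the open immersions $X_{n+1}^0\hookrightarrow X_n^0$. By Theorem \ref{theorem2} each $H^i_{\proet,c}(X_n^0,\bQ_p(j))$ is of compact type, and, arguing as in the proof of Theorem \ref{theorem2}, the transition maps $H^i_{\proet,c}(X_{n+1}^0,\bQ_p(j))\to H^i_{\proet,c}(X_n^0,\bQ_p(j))$ are compact: the only possibly infinite-dimensional graded piece, $H^d_c(X_{n+1}^0,\Omega^k)\to H^d_c(X_n^0,\Omega^k)$, is by Serre duality on these Stein spaces the strong dual of the restriction $\Omega^{d-k}(X_n^0)\to\Omega^{d-k}(X_{n+1}^0)$ shown compact above, and one concludes as in Theorem \ref{theorem2} by factoring the dual map through the Smith space $(\Omega^{d-k}(X_{n+1}))'_c$ and using that $H^d_c(X_{n+1}^0,\Omega^k)$ is nuclear; Lemma \ref{lemmaextendcompact} handles the rest. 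Since the finite-dimensional graded pieces stabilize and the restrictions $\Omega^{d-k}(X_n^0)\to\Omega^{d-k}(X_{n+1}^0)$ between connected Stein spaces have dense image, the relevant inverse systems satisfy (topological) Mittag--Leffler, so $R^1\lim_n$ vanishes and $H^i_{\proet,c}(X,\bQ_p(j))=\lim_n H^i_{\proet,c}(X_n^0,\bQ_p(j))$. Dualizing, this limit becomes $\colim_n H^i_{\proet,c}(X_n^0,\bQ_p(j))^{*}$, a countable colimit of nuclear Fr\'echet spaces along compact transition maps, hence of compact type by (1); its strong dual $H^i_{\proet,c}(X,\bQ_p(j))$ is therefore nuclear Fr\'echet.

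The main obstacle I anticipate is functional-analytic bookkeeping rather than geometric input: establishing that a countable colimit of nuclear Fr\'echet spaces along compact — not necessarily injective — transition maps is of compact type, and its dual formulation, and tracking the topological Mittag--Leffler condition through the spectral sequence so that $R^1\lim_n$ vanishes in (2). The identifications of $R\Gamma_{\proet}$ and $R\Gamma_{\proet,c}$ of the dagger affinoid with the colimit, respectively limit, over the presentation $\{X_n^0\}$ are the expected overconvergence/definitional facts.
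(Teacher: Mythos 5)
Your proposal is correct and follows essentially the same route as the paper: identify $R\Gamma_{\proet}(X,\bQ_p(j))$ with $\colim_n$ and $R\Gamma_{\proet,c}(X,\bQ_p(j))$ with $R\lim_n$ over the adapted naive interiors $X_n^0$, then invoke Theorems \ref{theorem1} and \ref{theorem2} together with compactness of the transition maps (Remark \ref{remarkcompactoffrechet} and the factorization through an intermediate affinoid) and topological Mittag--Leffler. The only cosmetic divergence is at the end of (2), where you obtain nuclearity of the Fr\'echet limit by double-dualizing through the compact-type colimit of the duals, whereas the paper cites Schneider's facts that a compact projective system is equivalent to a Banach system with dense transition maps and that such a limit is nuclear Fr\'echet.
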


\begin{proof}
	For the first claim, the canonical quasi-isomorphism 
	$$R\Gamma_{\proet}(X,\bQ_p(j))\simeq \mathrm{colim}_n R\Gamma_{\proet}(X_n^0,\bQ_p(j))$$ yields isomorphism
	$$H^i_{\proet}(X,\bQ_p(j))\cong \mathrm{colim}_n H^i_{\proet}(X_n^0,\bQ_p(j)).$$
	
	 We conclude from Theorem \ref{theorem1} and Remark \ref{remarkcompactoffrechet} that the system $\{H^i_{\proet}(X_n^0,\bQ_p(j))\}_n$ is an inductive system of Fr\'echet spaces with compact transition maps.\\
	 
	 For the second claim, Theorem \ref{theorem2} tells that the system $\{H^i_{\proet,c}(X_n^0,\bQ_p(j))\}_n$ is a compact projective system of locally convex Hausdorff spaces. By  \cite[discussion after Proposition 16.5]{schneider2013nonarchimedean}, the system is equivalent to a projective system of Banach spaces with dense transition map. Hence $R^1\lim_n  H^i_{\proet,c}(X_n^0,\bQ_p(j))=0$ by Mittage-Leffler. It follows from \cite[Corollary 16.6, Proposition 19.9]{schneider2013nonarchimedean} that
	 $$H^i_{\proet,c}(X,\bQ_p(j))\cong \lim_n H^i_{\proet,c}(X_n^0,\bQ_p(j))$$
	  is a nuclear Fr\'echet space.
	 
	 \end{proof}

\section{Arithmetic Duality}
Let $X$ be a smooth, geometrically connected Stein space over $K$. In this section, we deduce the duality of arithmetic pro-\'etale cohomology of $X$. We first follow \cite{geometricdual} to construct the syntomic complex with $G_K$-action on the Fargues-Fontaine curve and recall the main result in \cite{geometricdual}. Then we prove the local Tate duality of the syntomic sheaf and deduce the arithmetic duality.
 
 \subsection{Quasi-Coherent sheaves on the Fargues-Fontaine curve}
 
 \subsubsection{Generalities}
We recall the theory of quasi-coherent sheaves on analytic adic spaces developed in \cite{andreychev2021pseudocoherent}.

Let $Z$ be an analytic adic space over $\bQ_p$. We denote the $\infty$-category of solid quasi-coherent sheaves, defined in \cite{andreychev2021pseudocoherent}, via $D\qcoh(Z)$; the full $\infty$-subcategory of solid nuclear sheaves on $Z$ via $D\nuc(Z)$ and the full $\infty$-subcategory of perfect complexes, i.e analytic locally quasi-isomorphic to a bounded complex of projective $\cO_Z(U)$-modules, via $\perf(Z)$ (cf. \cite[Sec. 5]{andreychev2021pseudocoherent}). When $Z=\Spa(R,R^+)$ is affinoid, we have 
\begin{equation}\label{formulaaffinoidequivalence}
	D\qcoh(Z)\simeq D((R,R^+)_\square)
\end{equation}
$$D\nuc(Z)\simeq D\nuc((R,R^+)_\square)$$
$$\perf(Z)\simeq \perf(R)$$
where  $D((R,R^+)_\square)$ is the derived $\infty$-category of solid $(R,R^+)_\square$-modules (\cite[Sec. 3.3]{andreychev2021pseudocoherent}) and $\perf(R)$ is the $\infty$-category of (classical) perfect $R$-modules. The categories $D\qcoh(Z)$, $D\nuc(Z)$ and $\perf(Z)$ are equipped with compatible symmetric monoidal structures and are all closed relative to it.

Note that (see \cite[Sec. 3.3]{andreychev2021pseudocoherent}) these categories $D\qcoh(Z)$, $D\nuc(Z)$ and $\perf(Z)$ can be defined when $Z=(R,R^+)$ is a pair such that $R$ is a complete Huber ring and $R^+ \subseteq R^\circ$ is an arbitrary subring.\\

 Suppose now $Z=\Spa(R,R^+)$ is affinoid such that $(R,R^+)$ is a complete analytic Huber pair and $\underline{R}$ is a solid nuclear $\bQ_p$-algebra. Then $(R,\bZ)_\square$ is an analytic ring (cf. \cite[Proposition. A.29]{bosco2021p}) together with a morphism between analytic rings $(R,\bZ)_\square\to (R,R^+)_\square$. This morphism induces a base change functor 
 $$-\otimes_{(R,\bZ)_\square} (R,R^+)_\square: D((R,\bZ)_\square)\to D((R,R^+)_\square) $$
 between derived $\infty$-category of solid modules.
 
  \begin{proposition}[{\cite[Theorem A.22]{bosco2023rational}}]\label{propositionnuclearanalticstructure}
 	The functor 
 	$$-\otimes_{(R,\bZ)_\square} (R,R^+)_\square: D\nuc((R,\bZ)_\square)\to D\nuc((R,R^+)_\square) $$
 	induces an equivalence between symmetric monoidal $\infty$-categories.
 \end{proposition}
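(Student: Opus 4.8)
The plan is to realize the functor as coming from a smashing Bousfield localization of $D((R,\bZ)_\square)$ and then to show that this localization does nothing to nuclear objects; the nuclearity of $\underline R$ will be the one genuinely arithmetic input. Since $(R,\bZ)_\square\to(R,R^+)_\square$ is the identity on underlying condensed rings, the general formalism of analytic rings (cf.\ \cite[Sec.~3.3]{andreychev2021pseudocoherent}, following Clausen--Scholze) shows that the forgetful functor $\iota\colon D((R,R^+)_\square)\hookrightarrow D((R,\bZ)_\square)$ is fully faithful with essential image the full subcategory of $(R,R^+)_\square$-complete objects, and that its left adjoint is the base change $L:=-\otimes_{(R,\bZ)_\square}(R,R^+)_\square$. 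Idempotence of solidification gives $L\iota\simeq\mathrm{id}$; moreover $L$ is symmetric monoidal and preserves colimits while $\iota$ preserves all limits and colimits (solid modules being closed under colimits in condensed modules), so $L$ is a smashing localization. It therefore suffices to establish, inside $D((R,\bZ)_\square)$: \textbf{(a)} $L$ carries $D\nuc((R,\bZ)_\square)$ into $D\nuc((R,R^+)_\square)$; \textbf{(b)} $\iota$ carries $D\nuc((R,R^+)_\square)$ into $D\nuc((R,\bZ)_\square)$; \textbf{(c)} every object of $D\nuc((R,\bZ)_\square)$ is $(R,R^+)_\square$-complete. Granting these, (c) gives $\iota L\simeq\mathrm{id}$ on $D\nuc((R,\bZ)_\square)$, and together with $L\iota\simeq\mathrm{id}$ and (a)--(b) this forces $L$ and $\iota$ to restrict to mutually inverse equivalences $D\nuc((R,\bZ)_\square)\simeq D\nuc((R,R^+)_\square)$, symmetric monoidal because $L$ is.

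For (a) and (b) I would use the description (Clausen--Scholze; cf.\ \cite[Sec.~5]{andreychev2021pseudocoherent}) of the nuclear objects over an analytic ring $\cA$ as those obtained from the dualizable objects by filtered colimits and countable limits along towers with trace-class transition maps, together with the fact that a trace-class map factors through a dualizable object. Then (a) is formal: being symmetric monoidal and colimit-preserving, $L$ sends dualizables to dualizables, trace-class maps to trace-class maps, and the relevant limits to the corresponding limits. For (b), since $\iota$ is symmetric monoidal and preserves all limits and colimits, it is enough to see that $\iota$ carries dualizable $(R,R^+)_\square$-modules into $D\nuc((R,\bZ)_\square)$; any such object is a finite colimit-retract of the unit $(R,R^+)_\square$, whose underlying $(R,\bZ)_\square$-module is $\underline R$, so this reduces to $\underline R$ being nuclear — which is the hypothesis (and is also what makes $(R,\bZ)_\square$ an analytic ring, by \cite[Proposition~A.29]{bosco2021p}).

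The crux is (c), which I expect to be the main obstacle. For a profinite set $S$ put $Q_S:=\cone\!\big((R,\bZ)_\square[S]\to(R,R^+)_\square[S]\big)$; then $(R,R^+)_\square$-completeness of $M$ means $R\underline{\Hom}_{(R,\bZ)_\square}(Q_S,M)=0$ for all such $S$. Since $R\underline{\Hom}(Q_S,-)$ turns the filtered colimits and trace-class-tower limits presenting a nuclear object into the corresponding limits and colimits, one reduces to a basic nuclear $M=R\lim_n P_n$ with $P_n$ dualizable and trace-class transition maps, and then, using $R\underline{\Hom}(Q_S,P_n)\simeq Q_S^{\vee}\otimes P_n$ and that the transition maps are $Q_S^{\vee}\otimes(\text{trace-class})$, to showing that the resulting tower is pro-zero (so that its derived limit vanishes). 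This demands a careful analysis of $Q_S$ and of its dual — i.e.\ of the precise discrepancy between the free solid module $(R,\bZ)_\square[S]$ and its Huber-pair completion $(R,R^+)_\square[S]$ — and of the way trace-class maps out of a dualizable module annihilate that discrepancy; here the explicit $\varpi$-adic nature of the Huber-pair structure has to be reconciled with the fact that nuclear modules are assembled, along trace-class maps, out of Banach pieces built on $\underline R$. Disentangling solidification, $\varpi$-adic completion and nuclearity at the level of the period-type rings that appear in this paper is the substantive part of the argument; the localization picture of the first paragraph and the reductions of the second are formal by comparison.
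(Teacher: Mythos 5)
Your reduction to the three points (a)--(c) is structurally correct, and you have correctly located the crux at (c): one must show that every nuclear $(R,\bZ)_\square$-module is already $(R,R^+)_\square$-complete. But you do not prove (c) --- you sketch a strategy (analyzing the cone $Q_S$ and showing certain towers are pro-zero) and then explicitly defer ``the substantive part of the argument.'' As written this is a genuine gap. Moreover the route you propose toward it is misdirected: the presentation of nuclear objects you invoke is not quite right (basic nuclear objects are sequential \emph{colimits} along trace-class maps whose terms need not be dualizable, not limits of trace-class towers of dualizables), and controlling the dual of $Q_S$ against trace-class maps for a general Huber pair would require exactly the delicate completion-versus-solidification analysis you acknowledge avoiding.

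The paper closes (c) --- and with it the whole proposition --- in one stroke via the generation statement of \cite[Proposition A.15]{bosco2023rational}: both $D\nuc((R,\bZ)_\square)$ and $D\nuc((R,R^+)_\square)$ are full subcategories of $D(\cond(R))$ stable under colimits, and both are generated under shifts and colimits by the \emph{same} family of objects $\underline{\Hom}_{\bZ}(\bZ[S],R)=\underline{\Hom}_R((R,R^+)_\square[S],R)$ for $S$ extremally disconnected. Since $(R,R^+)_\square$ is normalized ($R$ is itself a complete $(R,R^+)_\square$-module), these generators are already $(R,R^+)_\square$-complete, so the base change functor acts as the identity on them; completeness is stable under colimits, which gives your (c), and generation by a common family under shifts and colimits gives the equivalence directly (symmetric monoidality being inherited from the base change functor, as you say). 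If you want to retain your localization framework, replace the tower analysis of $Q_S$ by this generation argument; without that, the proof is incomplete at its decisive step.
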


 \begin{proof}
 First note that because $(R,R^+)_\square$ is normalized ($R$ is a complete $(R,R^+)_\square$-module), for any profinite set $S$, we have $\underline{\Hom}_{\bZ}(\bZ[S],R)=\underline{\Hom}_{R}((R,R^+)_\square[S],R)$ is already $(R,R^+)_\square$-complete.\par 
   Recall that both of the categories are full $\infty$-subcategories of $D\cond(R)$ and stable under colimits. By \cite[Proposition A.15]{bosco2023rational}, both of them are generated by objects of the form $\underline{\Hom}_R(R[S],R)$, for varying $S$ extremally disconnected, under shifts and colimits. So the base change functor induces identity map on these generators and we deduce it is an equivalence of categories. Symmetric monoidal follows as the base change functor is symmetric monoidal. 
 	\end{proof}

 \begin{remark}\label{lemmawellknown}
     Assume $M,N\in D((R,R^+)_\square)$ then $R\underline{\Hom}_{R}(M,N)\in D((R,R^+)_\square)$ (Here $R\underline{\Hom}_{R}(-,-)$ is the internal Hom in the category of condensed $R$-modules) and the natural map 
     $$R\underline{\Hom}_{R}(M,N)\longrightarrow R\underline{\Hom}_{(R,R^+)_\square}(M,N) $$
     is a quasi-isomorphism in $D((R,R^+)_\square)$. This follows easily from a computation using adjunction and $-\otimes^L_{R} (R,R^+)_\square$ is symmetric monoidal.

     Similarly, the natural morphism
     $R\underline{\Hom}_{(R,\bZ)_\square}(M,N)\to  R\underline{\Hom}_{(R,R^+)_\square}(M,N)$ is also a quasi-isomorphism in $D((R,R^+)_\square)$.



 \end{remark}


\subsubsection{Quasi-coherent sheaves on the Fargues-Fontaine curve} 
$ $\\
We recall the basic of the theory of quasi-coherent sheaves on the Fargues-Fontaine curve. Let $S=\Spa(R,R^+)\to \Spa C^\flat$ be an affinoid perfectoid space in characteristic $p$, we define 
$$Y_{\FF,S}:=\Spa(W(R^+))\setminus V(p[p^\flat])=\cup_{I\subset (0,\infty)} Y_{\FF,S,I}$$
where $I=[a,b]\subset (0,\infty)$ is a closed interval with rational endpoints and $Y_{\FF,S,I}\subseteq Y_{\FF,S}$ is the open subset defined by 
$$Y_{\FF,S,I}:=\{|\cdot|:|p|^b\leq |[p^\flat]|\leq |p|^a\}\subseteq Y_{\FF,S}$$ 
 Let $\FF_S:=Y_{\FF,S}/\varphi^\bZ$ be the Fargues-Fontaine curve. We denote $Y_{\FF,S,I}=\Spa(B_{S,I},B_{S,I}^+)$ and $B_S:=\cO(Y_{\FF,S})=\lim_{I\subseteq (0,\infty)} B_{S,I}$. If $S=\Spa(C^\flat ,\cO_{C^\flat})$, we will omit $S$.\\
 
 \begin{definition}
 	We define the $\infty$-category of quasi-coherent $\varphi$-modules on $Y_{\FF,S}$ to be
		$$\xymatrix{
		{\mathrm{DQCoh}(Y_{\FF,S})^\varphi:=eq(\mathrm{DQCoh}(Y_{\FF,S})}\ar@<0.5ex>[r]^-{\varphi^*} \ar@<-0.5ex>[r]_-{id^*} & \mathrm{DQCoh}(Y_{\FF,S}))
		}$$
		whose objects are pairs $(\cE,\varphi_{\cE})$ where $\cE\in D\qcoh(Y_{\FF})$ and $\varphi_{\cE}:\varphi^*\cE\simeq \cE$. We define $D\nuc(Y_{S,\FF})^\varphi$ and $\perf(Y_{\FF})^\varphi$ similarly (using analytic descent of solid nuclear modules and perfect modules \cite[Theorem 5.42, 5.3]{andreychev2021pseudocoherent}).
 \end{definition}
 
 We will use the following convention: If $p\neq 2$ we will set $u=(p-1)/p$ and $v=p-1$ and if $p=2$ we will set $u=3/4$ and $v=3/2$. If $S$ is constructed as a tilt of a perfectoid space $S^\sharp$, the associated divisor of $S^\sharp$ in $Y_{\FF,S}$ is denoted by $y_{\infty}$. On $Y_{S,[u,v]}$, it is defined by a distinguished element $t$ which is a unit in $B_{S,[u,v/p]}$.\\ 
 
 Since one can construct $\FF_{S}$ as quotient of $Y_{\FF,S,[u,v]}$ via identification $\varphi:Y_{\FF,S,[u,u]}\stackrel{\simeq}\longrightarrow Y_{\FF,S,[v,v]}$, by \eqref{formulaaffinoidequivalence} and analytic descent we get 
 $$ \xymatrix{
 D\qcoh(Y_{\FF,S})^\varphi\simeq D(B_{S,\square})^\varphi:=eq(D((B_{S,[u,v]},B^+_{S,[u,v]})_\square)\ar@<0.5ex>[r]^-{\varphi^*} \ar@<-0.5ex>[r]_-{j^*} & D((B_{S,[u,v/p]},B^+_{S,[u,v/p]})_\square)
}$$
It is the $\infty$-category of $(M_{S,[u,v]},\varphi_M)$ where $M_{S,[u,v]}\in D((B_{S,[u,v]},B^+_{S,[u,v]})_\square)$ and $\varphi_M$ is a quasi-isomorphism of $(B_{S,[u,v/p]},B^+_{S,[u,v/p]})_\square$-modules 
$$\varphi_M: \varphi^*M_{S,[u,v]}\stackrel{\simeq}\longrightarrow M_{S,[u,v/p]}:=j^*M_{S,[u,v]}$$
where we abuse the notation to view the Frobenius $\varphi$ as the composition (we use $j$ to denote the map induced by inclusion) 
$$\varphi: B_{S,[u,v]}\stackrel{\varphi}\longrightarrow B_{S,[u/p,v/p]}\stackrel{j}\longrightarrow B_{S,[u,v/p]}.$$

One can define analogously 
$$\xymatrix{
D\nuc(Y_{\FF,S})^\varphi\simeq D\nuc(B_{S,\square})^\varphi:=eq(D\nuc((B_{S,[u,v]},B^+_{S,[u,v]})_\square)\ar@<0.5ex>[r]^-{\varphi^*} \ar@<-0.5ex>[r]_-{j^*} & D\nuc((B_{S,[u,v/p]},B^+_{S,[u,v/p]})_\square)
}$$

\begin{remark}
We will be interested in nuclear modules on the Fargues-Fontaine curve. By Proposition \ref{propositionnuclearanalticstructure}, we will also consider the following variant:
$$\xymatrix{
D(B_S)^\varphi:=eq(D((B_{S,[u,v]},\bZ)_\square)\ar@<0.5ex>[r]^-{\varphi^*} \ar@<-0.5ex>[r]_-{j^*} & D((B_{S,[u,v/p]},\bZ)_\square)).}$$
$$\xymatrix{
D\nuc(B_S)^\varphi:=eq(D\nuc((B_{S,[u,v]},\bZ)_\square)\ar@<0.5ex>[r]^-{\varphi^*} \ar@<-0.5ex>[r]_-{j^*} & D\nuc((B_{S,[u,v/p]},\bZ)_\square)).
}$$
It follows from Proposition \ref{propositionnuclearanalticstructure} that the base change functor induces an equivalence of $\infty$-categories $D\nuc(B_S)^\varphi\simeq D\nuc(B_{S,\square})^\varphi$. Similarly for the perfect complexes. (But it does not hold for $D(B_S)^\varphi$)
\end{remark}

Since the action of $\varphi$ on $Y_{\FF,S}$ is free and totally discontinuous, it follows from analytic descent for quasi-coherent sheaves that we have an equivalence of $\infty$-categories
$$D\qcoh(Y_{\FF,S})^\varphi\simeq D\qcoh(Y_{\FF,S}/\varphi^\bZ)=D\qcoh(\FF_S).$$
Thus, we obtain an equivalence of $\infty$-categories 
$$\cE_{\FF,S}: D(B_{S,\square})^\varphi\stackrel{\simeq}\longrightarrow D\qcoh(\FF_S).$$
Restricting $\cE_{\FF,S}$ to nuclear complexes and considering Proposition \ref{propositionnuclearanalticstructure}, we obtain an equivalence of $\infty$-categories
$$\cE_{\FF,S}: D\nuc(B_S)^\varphi\stackrel{\simeq}\longrightarrow D\nuc(\FF_S).$$

We recall the following result: For any rational number $w$ such that $u\leq w\leq v$, we have $B_{S,[u,w]}=B_{S,[u,v]}\langle f\rangle$ where $f=p/([p^\flat]^{\frac{1}{w}})\in B_{S,[u,v]}$ and $B_{S,[w,v]}= B_{S,[u,v]}\langle g\rangle$ where $g=[p^\flat]^{\frac{1}{w}}/p$ . We write $B_{1,\square}:=(B_{S,[u,v]},B^+_{S,[u,v]})_\square$ and $B_{2,\square}:=(B_{S,[u,w]},B^+_{S,[u,w]})_\square$. By \cite[Proposition 4.11]{andreychev2021pseudocoherent}, we have 
$$(-)\otimes^L_{B_{1,\square}} B_{2,\square}\simeq (-)\otimes^L_{(\bZ[T],\bZ)_\square} \bZ[T]_\square$$
where the map $(\bZ[T],\bZ)_\square\to B_{1,\square}$ is induced by $T\to f$ and $\bZ[T]_\square:=(\bZ[T],\bZ[T])_\square$. Meanwhile, by \cite[Proposition 3.12]{andreychev2021pseudocoherent}, for $M\in D((\bZ[T],\bZ)_\square)$, one has 
$$ M\otimes_{(\bZ[T],\bZ)_\square}^L \bZ[T]_\square\simeq R\underline{\Hom}_{R}(R_\infty/R,M)[1] $$
where $R=\bZ[T]$ and $R_\infty=\bZ((T^{-1}))$. A similar expression holds if we replace $B_{[u,v]}$ by $B_{[w,v]}$ and $B_{[u,w]}$ by $B_{[w,w]}$.

\begin{lemma}[{\cite[Lemma 2.13]{geometricdual}}]\label{lemmaglobalsection}
	Given $(M,\varphi_M)\in D\nuc(B)^\varphi$. Let $\varphi'_M$ be the composition $M_{[u,v]}\to M_{[v,v]}\cong M_{[u,u]}$ where the last isomorphism is the composition
	$$\begin{CD}
		M_{[v,v]}@>id\otimes 1>\cong > M_{[v,v]}\otimes_{B_{[v,v]}}^\square B_{[u,u]}@>\varphi_M>\cong> M_{[u,u]}.
	\end{CD} $$ 
	There is a quasi-isomorphism in $D(\bQ_{p,\square})$
	$$R\Gamma(\FF,\cE_{\FF}(M))\simeq [M_{[u,v]}\stackrel{\varphi_M'-1}\longrightarrow M_{[u,u]}].$$	
\end{lemma}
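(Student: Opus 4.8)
The plan is to reduce the global sections on $\FF$ to a \v{C}ech computation over the fundamental domain $Y_{\FF,[u,v]}$, using crucially that $v=pu$ (equivalently $v/p=u$), so that $[u,v]$ is a \emph{strict} fundamental domain for the $\varphi^{\bZ}$-action on $Y_{\FF}$. First, since $\FF=Y_{\FF}/\varphi^{\bZ}$ with $\varphi^{\bZ}\cong\bZ$ acting freely and totally discontinuously, the same analytic descent that produces $D\qcoh(Y_{\FF})^{\varphi}\simeq D\qcoh(\FF)$ gives, for $\cE_{\FF}(M)$ corresponding to the $\varphi$-equivariant sheaf $(\widetilde M,\varphi_{M})$ on $Y_{\FF}$, a natural quasi-isomorphism in $D(\bQ_{p,\square})$
\[
R\Gamma(\FF,\cE_{\FF}(M))\;\simeq\;R\Gamma\bigl(\bZ,\,R\Gamma(Y_{\FF},\widetilde M)\bigr)\;=\;\bigl[\,R\Gamma(Y_{\FF},\widetilde M)\xrightarrow{\ \varphi-1\ }R\Gamma(Y_{\FF},\widetilde M)\,\bigr],
\]
the fiber of $\varphi-1$, where $\varphi$ is the operator induced by $\varphi_{M}$ (cohomology of the quotient of a space by a free $\bZ$-action is the fiber of $g-1$ on cohomology of the cover).

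Next, I would compute $R\Gamma(Y_{\FF},\widetilde M)$ from the $\varphi$-stable affinoid cover $U_{n}:=\varphi^{n}(Y_{\FF,[u,v]})$, $n\in\bZ$. Because $v=pu$, consecutive charts meet in a single-radius locus $U_{n}\cap U_{n+1}$ (identified via a power of $\varphi$ with $Y_{\FF,[u,u]}$), while $U_{n}\cap U_{m}=\emptyset$ for $|n-m|\ge 2$; in particular there are no triple overlaps. Since affinoid adic spaces carry no higher solid quasi-coherent cohomology ($D\qcoh(\Spa(R,R^{+}))\simeq D((R,R^{+})_{\square})$ together with analytic descent, \cite{andreychev2021pseudocoherent}), the \v{C}ech complex of this cover is two-term:
\[
R\Gamma(Y_{\FF},\widetilde M)\;\simeq\;\Bigl[\,\textstyle\prod_{n\in\bZ}\widetilde M(U_{n})\xrightarrow{\ \check\partial\ }\prod_{n\in\bZ}\widetilde M(U_{n}\cap U_{n+1})\,\Bigr],
\]
and $\varphi_{M}$ identifies $\widetilde M(U_{n})\cong M_{[u,v]}$ and $\widetilde M(U_{n}\cap U_{n+1})\cong M_{[u,u]}$, compatibly with the index-shift given by $\varphi$.

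Finally, I would substitute this into the first display and collapse the resulting double complex. Each column has the form $\prod_{n\in\bZ}A\xrightarrow{\,1-\mathrm{sh}\,}\prod_{n\in\bZ}A$ for the $\varphi$-twisted shift $\mathrm{sh}$; the map $1-\mathrm{sh}$ admits an explicit (solid) section by partial sums, hence is split surjective with kernel the $\varphi$-invariant sequences, i.e.\ a single ``fundamental-domain'' copy of $A$. So the fiber of $\varphi-1$ on the two-term complex of the previous step is $[M_{[u,v]}\to M_{[u,u]}]$, with differential the map induced by $\check\partial$ on the distinguished factors. Unwinding $\check\partial$ over $U_{0}\cap U_{1}$: one summand is restriction $M_{[u,v]}\to M_{[u,u]}$ to the relevant end, and the other is restriction $M_{[u,v]}\to M_{[v,v]}$ followed by the isomorphism $M_{[v,v]}\xrightarrow{\ \sim\ }M_{[u,u]}$ induced by $\varphi_{M}$ --- because $U_{0}\cap U_{1}$ sits in $U_{0}=Y_{\FF,[u,v]}$ as $Y_{\FF,[v,v]}$ and in $U_{1}$ as the image of the $u$-end --- and the latter composite is exactly the map $\varphi'_{M}$ of the statement. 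This yields $R\Gamma(\FF,\cE_{\FF}(M))\simeq[M_{[u,v]}\xrightarrow{\varphi'_{M}-1}M_{[u,u]}]$, with ``$1$'' the restriction map.

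I expect the main obstacle to be the bookkeeping in this last step: verifying that $\check\partial$ induces precisely $\varphi'_{M}-\mathrm{res}$, and not a Frobenius twist on the wrong factor or its inverse. This forces one to track which of the two \v{C}ech projections $U_{n}\cap U_{n+1}\rightrightarrows U_{n},U_{n+1}$ carries $\varphi$, done consistently with the conventions fixed above (in particular with the normalization of the distinguished element $t$ at $p$); it is the only place where the precise form of $\varphi'_{M}$ in the statement is actually used. Everything else is formal, granting the descent statements for $D\qcoh$ from \cite{andreychev2021pseudocoherent} recalled above and the formula for cohomology of a free $\bZ$-quotient.
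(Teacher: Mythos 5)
Your proposal is correct, but it takes a genuinely different route from the paper. The paper never passes through $Y_{\FF}$: it covers $\FF$ itself by the two affinoid images of $Y_{\FF,[u,w]}$ and $Y_{\FF,[w,v]}$ for an intermediate $w$, notes that their overlap in $\FF$ is $Y_{[w,w]}\sqcup Y_{[u,u]}$ (the second circle appearing because the $v$-end is glued to the $u$-end via $\varphi$), simplifies the \v{C}ech nerve to the ordered two-term complex $M_{[u,w]}\oplus M_{[w,v]}\to M_{[u,u]}\oplus M_{[w,w]}$ using that the relevant localizations are steady (\cite[Exercise 12.17]{scholze2019lectures}), and then collapses via the Mayer--Vietoris fiber sequence $M_{[u,v]}\to M_{[u,w]}\oplus M_{[w,v]}\to M_{[w,w]}$. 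You instead descend along the infinite \'etale cover $Y_{\FF}\to\FF$ to get $\mathrm{fib}(\varphi-1)$ on $R\Gamma(Y_{\FF},\widetilde M)$, compute the latter from the $\bZ$-translates of the fundamental domain, and collapse the $\bZ$-direction. Your collapse step is sound, but note two points you should make explicit: (i) the surjectivity of $1-\mathrm{sh}$ with kernel a single copy of $M_{[u,v]}$ is a product phenomenon --- the recursive ``partial sum'' section has each coordinate depending on only finitely many inputs, so it is a solid map, but the argument would fail for $\bigoplus$; you are saved because $R\Gamma$ of the infinite disjoint unions in the \v{C}ech nerve is a product; (ii) the reduction of the full \v{C}ech nerve (which contains diagonal terms $U_n\cap U_n$) to the ordered two-term complex needs the same steadiness input the paper invokes. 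What your approach buys is a transparent role for the fundamental domain and the free $\bZ$-action, at the cost of invoking descent along the infinite cover $Y_{\FF}\to\FF$ (the paper only grants this at the level of categories, so you should spell out why $R\Gamma(\FF,-)$ is computed by $\mathrm{fib}(\varphi-1)$, e.g.\ as $R\Hom(\cO,-)$ in the equalizer category); the paper's two-chart argument is shorter and stays entirely within finite analytic descent.
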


\begin{proof}
	
	 Choose a rational number $w$ such that $u<w<v$. Then $\FF$ is covered by $Y_{[u,w]}$ and $Y_{[w,v]}$ and we can use the Cech nerve of $Y_{\FF,[u,w]}\coprod Y_{\FF,[w,v]}\to \FF$ to compute the cohomology.
	 Note that
	$$R\Gamma(Y_{[u,v]},\cE_{\FF}(M)|_{[u,v]}) =R\underline{\Hom}_{B_{[u,v]}}(B_{[u,v]},M_{[u,v]})=M_{[u,v]}.$$
  Since $(\bZ[T],\bZ)_\square\to \bZ[T]_\square$ is a steady localization,  \cite[Exercise 12.17]{scholze2019lectures} allows us to simplify the nerve and we get
  $$R\Gamma(\FF,\cE_{\FF}(M))=[M_{[u,w]}\oplus M_{[w,v]}\stackrel{f-g}\longrightarrow M_{[u,u]}\oplus M_{[w,w]}]$$
  where $f=(j,j): M_{[u,w]}\to M_{[u,u]}\oplus M_{[w,w]}$ and $g=(\varphi_M',j): M_{[w,v]}\to M_{[u,u]}\oplus M_{[w,w]}.$
  The conclusion follows from the following fiber sequence, which is acyclicity of $Y_{\FF,[u,v]}$
  $$ M_{[u,v]}\to M_{[u,w]}\oplus M_{[w,v]}\to M_{[w,w]}.$$

\end{proof}

 \subsubsection{Symmetric monoidal structure}
 We translate the closed symmetric monoidal structure on $D\qcoh(\FF_S)$ to the category $D(B_{S,\square})^\varphi$. To simplify the notation, we set $B_1:=(B_{S,[u,v]},B^+_{S,[u,v]})_\square$ and $B_2:=(B_{S,[u,u]},B^+_{S,[u,u]})_\square$. This tensor product on $D(B_{S,\square})^\varphi$, denoted by $(-)\otimes^L_{B_{S,\square}^{\FF}} (-)$ is induced from the one on $D(B_1)$. That is, given $(M,\varphi_M),(N,\varphi_N)\in D(B_{S,\square})^\varphi$ their tensor product is defined by  
 $$\begin{aligned}
 &M\otimes^L_{B_{S,\square}^{\FF}} N:=(M\otimes^L_{B_1} N,\varphi_{M\otimes N})\\
 &\varphi_{M\otimes N}=\varphi_M\otimes\varphi_N: (M\otimes^L_{B_1} N)\otimes^L_{B_1,\varphi} B_2\stackrel{\simeq}\to (M\otimes_{B_1}^L N)\otimes_{B_1} B_2=(M_{S,[u,v/p]}\otimes_{B_2}^L N_{S,[u,v/p]})
 \end{aligned}$$
 The internal Hom in $D(B_{S,\square})^\varphi$, denoted by $R\sH om_{B_{S,\square}^{\FF}}(-,-)$, is defined as 
 \begin{equation}\label{interHomformula}\begin{aligned}
      R\sH om_{B_{S,\square}^{\FF}}(M,N)&:=(R\sH om_{B_1}(M,N),\varphi_{M,N})\\
      \varphi_{M,N}:=(\varphi_M^{-1},\varphi_N) &: R\sH om_{B_1}(M,N)\otimes^L_{B_1,\varphi} B_2\to R\sH om_{B_1}(M,N)\otimes_{B_1}^L B_2.
\end{aligned}
 \end{equation}

 Here we use the following result:
 \begin{lemma}[{\cite[Lemma 2.4]{geometricdual}}] \label{lemmainternalHom} The canonical maps 
 $$R\underline{\Hom}_{B_1}(M,N)\otimes_{B_1,\varphi}^L B_2\to R\underline{\Hom}_{B_2}(M\otimes_{B_1,\varphi}^L B_2, N\otimes_{B_1,\varphi}^L B_2) $$
 $$R\underline{\Hom}_{B_1}(M,N)\otimes_{B_1}^L B_2\to R\underline{\Hom}_{B_2}(M\otimes_{B_1}^L B_2,N\otimes_{B_1}^L B_2)$$
 are quasi-isomorphisms.
 \end{lemma}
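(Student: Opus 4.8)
The plan is to reduce both quasi-isomorphisms to the single assertion that base change along a rational localization commutes with internal Hom, and to deduce the latter from the fact, due to \cite{andreychev2021pseudocoherent}, that a rational localization of analytic adic spaces is \emph{steady}. In both cases at hand the map of analytic rings $B_1\to B_2$ is, up to precomposition with the Frobenius $\varphi\colon Y_{\FF,S,[u,v]}\xrightarrow{\sim}Y_{\FF,S,[u/p,v/p]}$ --- which is an isomorphism of adic spaces, hence of analytic rings, and so transports base change and internal Hom identically --- the restriction map along a rational open immersion of annuli $Y_{\FF,S,I'}\hookrightarrow Y_{\FF,S,I}$ (or, when $B_2=(B_{S,[u,u]},B^+_{S,[u,u]})_\square$ is the boundary circle, a cofiltered limit of such, $Y_{[u,u]}=\bigcap_\varepsilon Y_{[u,u+\varepsilon]}$). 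So it is enough to prove: for $j\colon B_1\to B_2$ the restriction along a rational localization and $M,N\in D(B_1)$, the canonical map
$$R\underline{\Hom}_{B_1}(M,N)\otimes^L_{B_1}B_2\longrightarrow R\underline{\Hom}_{B_2}(M\otimes^L_{B_1}B_2,\,N\otimes^L_{B_1}B_2)$$
is a quasi-isomorphism.

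The crucial input is the explicit shape of this base change, recalled just above the statement: by \cite[Proposition 4.11]{andreychev2021pseudocoherent}, after pullback along $(\bZ[T],\bZ)_\square\to B_1$, $T\mapsto f$ (where $B_2\simeq B_1\langle f\rangle$), the functor $-\otimes^L_{B_1}B_2$ is computed by $-\otimes^L_{(\bZ[T],\bZ)_\square}\bZ[T]_\square$, which by \cite[Proposition 3.12]{andreychev2021pseudocoherent} is the internal Hom functor $R\underline{\Hom}_{\bZ[T]}(R_\infty/R,-)[1]$ with $R=\bZ[T]$, $R_\infty=\bZ((T^{-1}))$. Consequently $-\otimes^L_{B_1}B_2$ is symmetric monoidal, preserves all colimits, and --- being, up to restriction along $T\mapsto f$, an internal Hom functor, hence a right adjoint --- also \emph{preserves all limits}. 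This is the precise form in which I will use steadiness.

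Granting this, the rest is formal. The internal Hom adjunction is compatible with base change along any ring map: for $X\in D(B_1)$ and $Y\in D(B_2)$, chasing the adjunctions gives $R\underline{\Hom}_{B_2}(X\otimes^L_{B_1}B_2,\,Y)\simeq R\underline{\Hom}_{B_1}(X,Y)$ (internal Hom over $B_1$ of the underlying modules). Hence the right-hand side of the displayed map equals $R\underline{\Hom}_{B_1}(M,\,N\otimes^L_{B_1}B_2)$, and since restriction of scalars to $D(B_1)$ is conservative it suffices to check that the natural map
$$R\underline{\Hom}_{B_1}(M,N)\otimes^L_{B_1}B_2\longrightarrow R\underline{\Hom}_{B_1}(M,\,N\otimes^L_{B_1}B_2)$$
is a quasi-isomorphism of $B_1$-modules. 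Both sides send colimits in $M$ to limits --- here is where the limit-preservation of $-\otimes^L_{B_1}B_2$ is used, on the limits computing $R\underline{\Hom}_{B_1}(-,N)$ --- so the class of $M$ for which the map is an isomorphism is closed under colimits and it remains to verify it on a set of compact projective generators of $D(B_1)$; on these it follows once more from the same compatibilities. Combining with the first paragraph proves the lemma.

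I expect the genuine work to be the input of the second paragraph, namely that $-\otimes^L_{B_1}B_2$ is a limit-preserving symmetric monoidal functor (equivalently, that the rational localization is steady) in the generality at hand: the computations of \cite[Propositions 3.12 and 4.11]{andreychev2021pseudocoherent} must be run in the relative situation over an arbitrary affinoid perfectoid base $S=\Spa(R,R^+)$ and under the standing nuclearity hypotheses on the coefficient rings, and in the boundary variant one must further check that this survives the passage to the cofiltered limit $Y_{[u,u]}=\bigcap_\varepsilon Y_{[u,u+\varepsilon]}$. Once that is in place, the reduction via the Frobenius isomorphism and the formal manipulations need no new ideas.
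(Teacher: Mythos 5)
Your proposal is correct and follows essentially the same route as the paper: reduce the first map to the second via the Frobenius isomorphism, resolve $M$ by compact projectives (where the base-change compatibility is \cite[Proposition 5.38]{andreychev2021pseudocoherent}, i.e.\ steadiness), and commute $-\otimes^L_{B_1}B_2$ past the resulting limit using the description $(-)\otimes^L_{B_1}B_2\simeq R\underline{\Hom}_R(R_\infty/R,-)[1]$. The only superfluous worry is the boundary circle: $Y_{[u,u]}\subseteq Y_{[u,v]}$ is itself a rational subset (a composite of two localizations of the recalled form), not a cofiltered limit, so no extra limit argument is needed there.
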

 
 \begin{proof}
 	The first map is induced by composition $B_{[u,v]}\stackrel{\varphi}\longrightarrow B_{[u/p,v/p]}\longrightarrow B_{[u,v/p]}$
 	where $\varphi$ is an isomorphism. Thus it suffices to show the second quasi-isomorphism. 
 	
 	Write $M=\colim_i M_i$ as colimit of compact projective objects $\{M_i\}_i$. Thus 
 	$$R\underline{\Hom}_{B_1}(M,N)=R\lim_i R\underline{\Hom}_{B_1}(M_i,N)$$
 	$$R\underline{\Hom}_{B_2}(M\otimes_{B_1}^L B_2,N\otimes_{B_1}^L B_2)=R\lim_i R\underline{\Hom}_{B_1}(M_i\otimes_{B_1}^L B_2,N\otimes_{B_1}^L B_2).$$
 	By \cite[Prop. 5.38]{andreychev2021pseudocoherent}, we have 
 	$$R\underline{\Hom}_{B_1}(M_i,N)\otimes^L_{B_1} B_2\simeq 
 	R\underline{\Hom}_{B_2}(M_i\otimes_{B_1}^L B_2,N\otimes^L_{B_1} B_2).$$
 	So it suffices to see $(-)\otimes_{B_1}^L B_2$ commutes with derived limit. But as we recalled above 
 	$$(-)\otimes_{B_1}^L B_2\simeq (-)\otimes^L_{(\bZ[T],\bZ)_\square} \bZ[T]_\square\simeq R\underline{\Hom}_R(R_\infty/R,-)[1]$$ 
 	which commutes with limits.
 \end{proof}
 

 We will also consider the tensor product and internal Hom on the category $D(B_S)^\varphi$. Similar to $D(B_{S,\square})^\varphi$, the tensor product on $D(B_S)^\varphi$, denoted by $(-)\otimes_{B^{\FF}_S} (-)$, is induced from the one on $D((B_{S,[u,v]},\bZ)_\square)$. By adjoint functor theorem, there exists an internal Hom $R\sH om_{B^{\FF}_S}(-,-)$ on $D(B_S)^\varphi$, but we do not know how to describe it in general. This is because Lemma \ref{lemmainternalHom} will not hold in general if we replace $B_1$ and $B_2$ by $(B_{[u,v]},\bZ)_\square$ and $(B_{[u,v/p]},\bZ)_\square$ respectively. However, it holds in the following situation in which case we can describe $R\sH om_{B^{\FF}_S}(-,-)$ analogously to (and compatible with) \eqref{interHomformula}. We will only consider internal Homs in $D\nuc(B_S)^\varphi$ under this assumption.

 \begin{lemma} \label{internalHomhypothesis}
     Write $B_1':=(B_{S,[u,v]},\bZ)_\square, B_2':=(B_{S,[u,v/p]},\bZ)_\square$. Assume we have $M=(M_1,\varphi_M),N=(N_1,\varphi_N)\in D\nuc(B_S)^\varphi$ such that 
     $$R\underline{\Hom}_{B_1'}(M_1,N_1)\in D\nuc((B_{S,[u,v]},\bZ)_\square).$$ 
      Then the natural maps
      $$R\underline{\Hom}_{B_1'}(M_1 ,N_1) \otimes^L_{B'_1,\varphi} B_2'\longrightarrow R\underline{\Hom}_{B_2'}(M_1 \otimes^L_{B'_1,\varphi} B_2',N_1\otimes^L_{B'_1,\varphi} B_2')$$
     $$R\underline{\Hom}_{B_1'}(M_1 ,N_1) \otimes^L_{B'_1} B_2'\longrightarrow R\underline{\Hom}_{B_2'}(M_1 \otimes^L_{B'_1} B_2',N_1\otimes^L_{B'_1} B_2')$$
     are quasi-isomorphisms.
 \end{lemma}

 \begin{proof}
     By an argument similar to Lemma \ref{lemmainternalHom}, it suffices to show the second map is a quasi-isomorphism. We still write $B_1:=(B_{S,[u,v]},B^+_{S,[u,v]})_\square$ and $B_2:=(B_{S,[u,u]},B^+_{S,[u,u]})_\square$.\par
     Note that if $K\in D\nuc(B_1')$ then $K\otimes^L_{B_1'} B_2' \in D\nuc(B_2')$ since nuclear objects are preserved under base change between analytic rings (\cite[Corollary A.12]{bosco2023rational}). In particular, applying Proposition \ref{propositionnuclearanalticstructure}, we have that $K$ is  $B_1$-complete, $K\otimes^L_{B_1'} B_2'$ is  $B_2$-complete and 
     \begin{equation}\label{formulabc}
         K\otimes^L_{B_1'} B_2'\simeq K\otimes^L_{B_1'} B_2 \simeq  K\otimes^L_{B_1} B_2.
     \end{equation}
      Now we have the following computation:
     $$\begin{aligned}
         R\underline{\Hom}_{B_2'}(M_1 \otimes^L_{B'_1} B_2',N_1\otimes^L_{B'_1} B_2')&\simeq R\underline{\Hom}_{B_2}(M_1 \otimes^L_{B_1} B_2,N_1\otimes^L_{B_1} B_2)\\
         &\simeq R\underline{\Hom}_{B_1}(M_1 ,N_1) \otimes^L_{B_1} B_2\\
         &\simeq R\underline{\Hom}_{B'_1}(M_1 ,N_1) \otimes^L_{B'_1} B'_2
     \end{aligned}$$
     where the first isomorphism uses \eqref{formulabc} and Remark \ref{lemmawellknown}, the second one uses Lemma \ref{lemmainternalHom}, the third one uses the assumption $\underline{\Hom}_{B_1'}(M_1,N_1)\in D\nuc(B'_1)$ and \eqref{formulabc}.
 \end{proof}

 \subsubsection{Galois equivariant nuclear sheaves}
 In the following, We will also need to track the Galois action on some nuclear sheaves. So we set $S=C^\flat$ to only consider the absolute Fargues-Fontaine curve. In presence of Proposition \ref{propositionnuclearanalticstructure}, we can only consider analytic rings of the form $(B,\bZ)_\square$ (so no worry about the action on the analytic structure). 
 
 For each $I\subseteq (0,\infty)$ a closed interval with rational endpoints, the absolute Galois group $G_K$ acts continuously on $B_I$ which respects the multiplication on $B_I$ and acts trivially on $\bQ_p\subseteq B_I$. Thus we can form the category $\solid_{B_I}^{G_K}$ of solid $B_I$-modules with semilinear $G_K$-actions. More precisely, an object in $\solid_{B_I}^{G_K}$ is a pair $(M,\rho_M)$ where $M\in \Mod_{B_{I,\square}}^{\solid}$ (here $B_{I,\square}=(B_I,\bZ)_\square$) and $\rho_M: \bQ_{p,\square}[G_K]\otimes_{\bQ_p}^{\square} M\to M$ is a $\bQ_p$-linear map such that it is $B_I$-semilinear (satisfying an obvious diagram). A morphism from $(M_1,\rho_1)\to (M_2,\rho_2)$ is a $B_I$-linear map $M_1\to M_2$ which is equivariant with respect to the action $\rho_1$ and $\rho_2$. It is easy to check that it is a Grothendieck abelian category with tensor product and internal Hom similar to $G_K$-representations. We have the derived $\infty$-category $D(B_{I,\square})^{G_K}$ and the full $\infty$-subcategory of solidly nuclear modules $D\nuc(B_{I,\square})^{G_K}$ which contains those objects whose underlying complex is solidly nuclear.
 
 \begin{definition}
 	We define the ($\infty$-)category of $G_K$-equivariant nuclear sheaves to be 
 	$$\xymatrix{
 	D\nuc(B)^{\varphi,G_K}:=eq(D\nuc(B_{[u,v],\square})^{G_K}\ar@<0.5ex>[r]^-{\varphi^*} \ar@<-0.5ex>[r]_-{j^*}& D\nuc(B_{[u,v/p],\square})^{G_K}) 
 	}$$ where $B_{I,\square}:=(B_I,\bZ)_\square$. 
 	It is the $\infty$-category of $((M_{[u,v]},\rho_M),\varphi_M)$ where $(M_{[u,v]},\rho_M)\in D\nuc(B_{[u,v],\square})^{G_K}$ and $\varphi_M$ is a quasi-isomorphism in $D\nuc(B_{[u,v/p],\square})^{G_K}$
 	$$\varphi_M: \varphi^*M_{[u,v]}\stackrel{\simeq}\longrightarrow M_{[u,v/p]}.$$ 
 	Note that the $G_K$-action on $B_I$ commutes with the Frobenius action, thus we can form the base change $\varphi^*$.
 \end{definition}
 
 \begin{remark}
 	We can also define the derived ($\infty$)-category of solid modules
 	$$\xymatrix{
 	D(B)^{\varphi,G_K}:=eq(D(B_{[u,v],\square})^{G_K}\ar@<0.5ex>[r]^-{\varphi^*} \ar@<-0.5ex>[r]_-{j^*}& D(B_{[u,v/p],\square})^{G_K}) 
 	}$$
 	but in general they do not corresponds to $G_K$-equivariant sheaves on the curve.
 \end{remark}
 
 The forgetful functor $F: D\nuc(B_{I,\square})^{G_K}\to D\nuc(B_{I,\square})$ induces a forgetful functor 
 $$F: D\nuc(B)^{\varphi,G_K}\to D\nuc(B)^{\varphi};\ (M,\rho_M,\varphi_M)\mapsto (M,\varphi_M)$$
  which commutes with all limits and colimits. We abuse the notation to denote 
  $$\cE_{\FF}((M,\rho_M,\varphi_M)):=\cE_{\FF}(F(M,\rho_M,\varphi_M)).$$
 and call morphisms in $D\nuc(B)^{\varphi,G_K}$ as $G_K$-equivariant morphisms between corresponding sheaves. By Lemma \ref{lemmaglobalsection} and since Frobenius commutes with Galois action, it induces a Galois action on the global section thus we obtain a functor:
  $$R\Gamma(\FF,\cE_{\FF}(-)): D\nuc(B)^{\varphi,G_K}\longrightarrow D(\bQ_{p,\square}[G_K]).$$
  
  Assume $(M_1,\rho_1,\varphi_1),(M_2,\rho_2,\varphi_2)\in D\nuc(B)^{\varphi,G_K}$. The category $D\nuc(B)^{\varphi,G_K}$ also carries a symmetric monoidal structure induced from the one on $D\nuc(B_I)^{G_K}$ and $D\nuc(B)^{\varphi}$ which is given by 
  $$(M_1,\rho_1,\varphi_1)\otimes^L_{\FF,G_K} (M_2,\rho_2,\varphi_2)=(M_1\otimes_{B_{[u,v],\square}}^L M_2, \rho_1\otimes \rho_2
  ,\varphi_1\otimes\varphi_2).$$
  By Lemma \ref{internalHomhypothesis}, we can describe the internal Hom under the hypothesis that: 
  \begin{equation}\label{conditioninthom}
       R\underline{\Hom}_{B_{[u,v]}}(M_1,M_2)\in D\nuc(B_{[u,v],\square}).
  \end{equation}
  In fact, we need to see the natural $G_K$-equivariant map
  $$R\underline{\Hom}_{B_{[u,v]}}(M_1,M_2)\otimes^L_{B_{[u,v],\square}} B_{[u,v/p],\square}\to R\underline{\Hom}_{B_{[u,v/p]}}(M_1\otimes^L_{B_{[u,v],\square}} B_{[u,v/p],\square},M_2\otimes^L_{B_{[u,v],\square}} B_{[u,v/p],\square})  $$
  is a quasi-isomorphism. This can be checked after forgetting the $G_K$-action which follows from Lemma \ref{internalHomhypothesis}. In this case, the internal Hom is given by
  \begin{equation}\label{formulaactiononHom}
  R\sH om_{\FF,G_K}((M_1,\rho_1,\varphi_1),(M_2,\rho_2,\varphi_2))=(R\sH om_{\FF}((M_1,M_2),\rho_2\rho_1^{-1},\varphi_{M_1,M_2}).
  \end{equation}
  Since we will always remember the Galois action, we will abuse the notation and write $(-)\otimes^L_{\FF,G_K}(-)$ as $(-)\otimes^L_{\FF}(-)$, $R\sH om_{\FF,G_K}(-,-)$ as $R\sH om_{\FF}(-,-)$ and $R\Gamma(\FF,R\sH om_{\FF,G_K}(-,-))$ as $R\Hom_{\FF}(-,-)$.

\subsection{Syntomic complexes and duality on the Fargues-Fontaine curve}
$ $

Let $X$ be a smooth rigid analytic Stein variety over $K$.

\subsubsection{The de Rham sheaf}
Recall that because of acyclicity of coherent cohomology of Stein space, the filtered $B_{\dR}^+$-cohomology (and compactly supported version) is represented by the complex (if $a<0$, $t^aB_{\dR}^+:=F^aB_{\dR}^+=B_{\dR}^+$)
$$F^rR\Gamma_{\dR}(X/B_{\dR}^+)\simeq \cO(X)\otimes_K^\square t^rB_{\dR}^+\to \Omega^1(X)\otimes_K^\square t^{r-1}B_{\dR}^+\to \cdots \Omega^d(X)\otimes_K^\square t^{r-d}B_{\dR}^+. $$
 $$F^rR\Gamma_{\dR,c}(X/B_{\dR}^+)\simeq (H^d_c(X,\cO)\otimes_K^\square t^rB^+_{\dR}\to\cdots\to H^d_c(X,\Omega^d)\otimes_K^\square t^{r-d}B_{\dR}^+)[-d].$$
 For each $r\in \bN$, the truncated $B_{\dR}^+$-complex is defined as 
 \begin{equation}\label{formulaofDRcomplex}
 \begin{aligned}
 &DR(X,r):=R\Gamma_{\dR}(X/B_{\dR}^+)/F^r\\ 
 &\simeq \cO(X)\otimes_K^\square (B_{\dR}^+/t^r)\to \Omega^1(X)\otimes_K^\square (B_{\dR}^+/t^{r-1})\to \cdots \to \Omega^d(X)\otimes_K^\square (B_{\dR}^+/t^{r-d}) \\
 &DR_c(X,r):= R\Gamma_{\dR,c}(X/B^+_{\dR})/F^r\\
 &\simeq [H^d_c(X,\cO)\otimes_K^\square (B_{\dR}^+/t^r)\to H^d_c(X,\Omega^1)\otimes^\square_K (B_{\dR}^+/t^{r-1})\to\cdots H^d_c(X,\Omega^d)\otimes_K^\square (B_{\dR}^+/t^{r-d})][-d]
 \end{aligned}\end{equation}
 
 Let $*\in\{\emptyset,c\}$, we associate a quasi-coherent sheaf to $DR_*(X,r)$.  Define 
 $$R\Gamma_{\dR,*,B_{[u,v]}}(X_C,r)$$
 via replacing $B_{\dR}^+$ in \eqref{formulaofDRcomplex} by $B_{[u,v]}$. We see $\Omega^i(X)$ and $H^d_c(X,\Omega^i)$ as $K$-vector spaces with trivial $G_K$-action. In fact, because $B_{[u,v]}/t^a\cong B_{dR}^+/t^a$, this does not change the complex.  Since $\Omega^i(X)$ and $H^d_c(X,\Omega^i)$ are solid nuclear $K$-vector spaces and solid nuclearity is stable under base change (\cite[Corollary A.12]{bosco2023rational}), $R\Gamma_{dR,*,B_{[u,v]}}(X_C,r)$ is a solidly nuclear $B_{[u,v],\square}$-complex. Recall that $t$ is a unit in $B_{[u,v/p]}$, thus we have  
 $$R\Gamma_{\dR,*,B_{[u,v]}}(X_C,r)\otimes_{B_{[u,v],\square}}^L B_{[u,v/p]}=0.$$
 Therefore, the triple 
 $$R\Gamma^{B,\cE}_{\dR,*}(X_C,r):= (R\Gamma_{\dR,*,B_{[u,v]}}(X,r),\rho_{B_[u,v]},0)$$
 lies in $ D\nuc(B)^{\varphi,G_K}$ with the corresponding nuclear sheaf  $$\cE_{\dR,*}(X_C,r):=\cE_{\FF}(R\Gamma^{B,\cE}_{\dR,*}(X_C,r)).$$
 We set $R\Gamma_{\dR,*}^B(X_C,r):=R\Gamma(\FF,\cE_{\dR,*}(X_C,r)\in D(\bQ_{p,\square}[G_K])$.\\
 
 The following lemma is obvious from the construction:
 \begin{lemma}
 	For $r\in \bN$, there is a natural quasi-isomorphism
 	$$\cE_{dR,*}(X_C,r)\simeq i_{\infty,*}R\Gamma_{dR,*}(X_C,r).$$
 \end{lemma}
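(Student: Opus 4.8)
The plan is to unwind the two sides and observe that they are built from literally the same data; as the authors note, this is essentially a bookkeeping argument. First I would record what the two sides are. By construction the underlying object of $R\Gamma^{B,\cE}_{\dR,*}(X,r)$ in $D\nuc(B)^{\varphi,G_K}$ is the complex $R\Gamma_{\dR,*,B_{[u,v]}}(X,r)$ equipped with the \emph{zero} Frobenius structure map: the target $R\Gamma_{\dR,*,B_{[u,v]}}(X,r)\otimes^L_{B_{[u,v],\square}}B_{[u,v/p]}$ vanishes because every term of the complex is a completed base change $V\otimes^\square_K B_{[u,v]}/t^{a}$ with $V$ one of the nuclear $K$-vector spaces $\Omega^i(X)$ or $H^d_c(X,\Omega^i)$ and $0\le a\le r$, so the whole complex is annihilated by $t^r$, while $t$ is a unit in $B_{[u,v/p]}$.

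Next I would pass to geometry on $Y_{\FF}=Y_{\FF,C^\flat}$. A $t^r$-torsion quasi-coherent complex on $Y_{\FF,[u,v]}$ is scheme-theoretically supported on the $r$-th infinitesimal thickening $y_\infty^{(r)}$ of the divisor $y_\infty$, which the conventions of the excerpt place away from the gluing locus $Y_{\FF,[u,u]}\sqcup Y_{\FF,[v,v]}$ along which $\FF$ is obtained from $Y_{\FF,[u,v]}$ (this is exactly the content of "$t$ is a unit on $B_{[u,v/p]}$", the two boundary annuli being $\varphi$-identified). Hence the $\varphi$-descent datum entering the definition of $\cE_{\FF}$ is the trivial isomorphism $0\simeq 0$ near $y_\infty^{(r)}$, so $\cE_{\FF}(R\Gamma^{B,\cE}_{\dR,*}(X,r))$ vanishes away from $\infty$ and, near $\infty$, is the pushforward of the restriction of $R\Gamma_{\dR,*,B_{[u,v]}}(X,r)$ to $y_\infty^{(r)}$ along the isomorphism $y_\infty^{(r)}\cong\infty^{(r)}$ induced by the quotient map $Y_{\FF,[u,v]}\to\FF$ (a local isomorphism at $y_\infty$, since the $\varphi$-action is free and totally discontinuous). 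Finally, because $B_{[u,v]}/t^a=B_{\dR}^+/t^a$ and $B_{\dR}^+$ is the completed local ring of $Y_{\FF}$ at $y_\infty$, that restriction is identified with $DR_*(X,r)=R\Gamma_{\dR,*}(X,r)$, which gives the asserted $\cE_{\dR,*}(X,r)\simeq i_{\infty,*}R\Gamma_{\dR,*}(X,r)$. Every step is $G_K$-equivariant: $G_K$ fixes $y_\infty$ (it scales $t$ by a unit), acts on $B_{\dR}^+$ in the usual way, and acts trivially on the $K$-vector spaces $\Omega^i(X)$ and $H^d_c(X,\Omega^i)$.

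The only point requiring (minor) care — and so the "hard part", modest as it is — is to check that the abstract equivalence $\cE_{\FF}$, built through the analytic-ring and base-change formalism of \cite{andreychev2021pseudocoherent}, really does compute this naive extension by zero from the fundamental domain when applied to $t^r$-torsion $\varphi$-modules, and that the copy of the divisor at infinity sitting inside $Y_{\FF,[u,v]}$ is $y_\infty$ itself rather than one of its $\varphi$-translates. Both are immediate from the description of $\FF$ as the quotient of $Y_{\FF,[u,v]}$ by the identification $\varphi\colon Y_{\FF,[u,u]}\cong Y_{\FF,[v,v]}$ together with $t$ being a unit on $B_{[u,v/p]}$, so that in the end no genuine computation is needed.
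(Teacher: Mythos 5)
Your argument is correct and is exactly the elaboration of what the paper leaves implicit: the paper states this lemma with no proof beyond ``obvious from the construction,'' and your unwinding (the complex is $t^r$-torsion with zero Frobenius structure, hence supported on the $r$-th infinitesimal neighborhood of $y_\infty$ inside the fundamental domain $Y_{\FF,[u,v]}$, which maps isomorphically to the corresponding neighborhood of $\infty$ in $\FF$, where $B_{[u,v]}/t^a=B_{\dR}^+/t^a$ identifies the restriction with $DR_*(X,r)$) is precisely the intended justification. The $G_K$-equivariance check at the end is also as the paper intends, so nothing is missing.
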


\subsubsection{The Hyodo-Kato sheaf} Recall that 
  $$B_{log}:=B\otimes_{\mathrm{Sym}_{\bZ}\cO_{C^\flat}^\times} \mathrm{Sym}_{\bZ} C^{\flat,\times}$$ is defined in \cite[Sec. 10.3.1]{fargues2019courbes}. It is non-canonically isomorphic to $B[U]$, endowed with action of $G_K,\varphi$ and $N$ which is compatible with $\varphi,G_K$-action on $B$. For $g\in G_K$, the actions are induced by
 $$ g(U)=U+\log([g(p^\flat)/p^\flat]);\ \varphi(U)=pU;\ N=\frac{d}{dU}.$$
 In particular, $N\varphi=p\varphi N$ and the $G_K$-action commutes with $\varphi$ and $N$. There is a $G_K$-equivariant embedding $B_{log}\to B_{\dR}^+$ induced by $U\to \log([p^\flat]/p])$.\\
 For $I=[a,b]\subset (0,\infty)$ a closed interval with rational endpoint, we have similarly
 $$B_{I,log}:=B_I\otimes_{\mathrm{Sym}_{\bZ}O_{C^\flat}^\times} \mathrm{Sym}_{\bZ} C^{\flat,\times}$$
 endowed with the action of $G_K$ and $N$ and $G_K$-equivariant embedding into $B_{\dR}^+$. We view them as a condensed $\bQ_p$-vector space and note that it coincides with the period sheaf definition in \cite[Notation 2.31]{bosco2023rational} using \cite[Proposition. 4.9]{bosco2021p}.\\

 Let $X$ be a smooth rigid analytic variety over $K$, $r$ be an integer and $*\in\{\emptyset,c\}$. Let $R\Gamma_{\HK,*}(X_C)\in D_{\varphi,N,G_K}(\check{C})$ be the Hyodo-Kato cohomology defined in \cite[Sec. 4]{niziol2021cohomology} and \cite[Sec. 3.2]{achinger2025compactlysupportedpadicproetale}, where $D_{\varphi,N,G_K}(\check{C})$ is the derived $\infty$-category of solid $(\varphi,N,G_K)$-modules over $\check{C}$ (\cite[Sec. 3]{bosco2023rational}).

 \begin{definition} Let $X$ be a smooth Stein space over $K$ and $*\in\{\emptyset,c\}$, for $I=[a,b]\subset (0,\infty)$ a closed interval with rational endpoint, we define
 		$$R\Gamma_{\HK,*,B_I}(X_C,r):=(R\Gamma_{\HK,*}(X_C)\{r\}\otimes^{L_\square}_{\check{C}} B_{I,log})^{N=0}$$
 	
 	where twist $\{r\}$ means the frobenius on $R\Gamma_{\HK,*}(X_C)$ is the usual frobenius action divided by $p^r$. 
 \end{definition}

 \begin{lemma}\label{lemmaHKisnuclear}
 	If $X$ is a smooth Stein space over $K$ and $I\subseteq J\subseteq (0,\infty)$ are two closed interval with rational endpoints, then
 	$$R\Gamma_{\HK,*,B_I}(X_C,r)=R\Gamma_{\HK,*,B_J}(X_C,r)\otimes^{L_\square}_{B_{J}} B_I $$ 
 	$$R\Gamma_{\HK,*,B_I}(X_C,r)\in D\nuc(B_I)$$
 	
  \end{lemma}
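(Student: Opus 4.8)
The plan is to reduce both assertions to formal properties of the two operations appearing in the definition of $R\Gamma_{\HK,*,B_I}(X_C,r)$: extension of scalars, and the fiber of the monodromy operator. Write $M:=R\Gamma_{\HK,*}(X_C)\{r\}$, viewed as a solid $\check C$-complex, so that $R\Gamma_{\HK,*,B_I}(X_C,r)=(M\otimes^{L_\square}_{\check C}B_{I,log})^{N=0}$ with $(-)^{N=0}=\mathrm{fib}(N)$ a finite limit and $N=N_{\HK}\otimes 1+1\otimes\tfrac{d}{dU}$ the total monodromy. I will use three elementary observations. (i) For $I\subseteq J$ the map $B_J\to B_I$ is a composition of rational localizations, hence $-\otimes^{L_\square}_{B_J}B_I$ is \emph{steady}, i.e.\ it preserves all limits and colimits of solid modules; in particular it preserves the fiber of an endomorphism. (ii) $B_{I,log}$ is a free $B_I$-module, $B_{I,log}\cong\bigoplus_{n\ge 0}B_I\cdot U^n=\colim_N\bigoplus_{n\le N}B_I\cdot U^n$, and — by this freeness — $B_{I,log}=B_{J,log}\otimes_{B_J}B_I$, the base change being automatically underived. (iii) A direct check shows the total operator $N$ is $B_I$-linear, and $B_J$-linear when $I\subseteq J$, since $\tfrac{d}{dU}$ annihilates $B_I$ and $N_{\HK}$ is $\check C$-linear; so $(-)^{N=0}$ is a finite limit in $D((B_I,\bZ)_\square)$.

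For the base change formula (1), I compute $M\otimes^{L_\square}_{\check C}B_{I,log}\simeq M\otimes^{L_\square}_{\check C}\bigl(B_{J,log}\otimes^{L_\square}_{B_J}B_I\bigr)\simeq\bigl(M\otimes^{L_\square}_{\check C}B_{J,log}\bigr)\otimes^{L_\square}_{B_J}B_I$ by associativity of the solid tensor product and observation (ii). Since $N$ is $B_J$-linear (observation (iii)) and $-\otimes^{L_\square}_{B_J}B_I$ commutes with fibers of $B_J$-linear maps (observation (i)), applying $(-)^{N=0}$ to both sides yields $R\Gamma_{\HK,*,B_I}(X_C,r)\simeq R\Gamma_{\HK,*,B_J}(X_C,r)\otimes^{L_\square}_{B_J}B_I$.

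For nuclearity (2), the only non-formal ingredient is $M\in D\nuc(\check C)$. Since $D\nuc(\check C)$ is closed under shifts and extensions, this reduces to each group $H^i_{\HK,*}(X_C)$ being solid-nuclear over $\check C$; for a Stein space these groups are nuclear Fr\'echet or of compact type — countable inverse, resp.\ direct, limits of finite-dimensional $\check C$-vector spaces (cf.\ \cite{niziol2021cohomology, achinger2025compactlysupportedpadicproetale}) — and both classes are solid-nuclear (classical nuclearity implies solid nuclearity). Granting this, observation (ii) gives $M\otimes^{L_\square}_{\check C}B_{I,log}\simeq\colim_N\bigoplus_{n\le N}\bigl(M\otimes^{L_\square}_{\check C}B_I\bigr)$, and each $M\otimes^{L_\square}_{\check C}B_I$ is the base change of the nuclear object $M$ along the morphism of analytic rings $(\check C,\bZ)_\square\to(B_I,\bZ)_\square$, hence lies in $D\nuc((B_I,\bZ)_\square)$ by stability of solid nuclearity under base change (\cite[Cor.~A.12]{bosco2023rational}). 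As $D\nuc((B_I,\bZ)_\square)$ is closed under finite direct sums, colimits and fibers, it contains $(M\otimes^{L_\square}_{\check C}B_{I,log})^{N=0}=R\Gamma_{\HK,*,B_I}(X_C,r)$.

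The genuinely external input is thus only that Hyodo--Kato cohomology of a Stein space (and its compactly supported variant) lies in $D\nuc(\check C)$; everything else is bookkeeping with steadiness of rational localizations, $B_J$-linearity of the monodromy, and closure properties of $D\nuc$. I expect the main points requiring attention to be the verification of observation (iii) — that the total $N$ is $B_J$-linear, so that its fiber base-changes correctly — and the compatibility/associativity of the various solid tensor products over $\check C$, $B_J$ and $B_I$; both are routine.
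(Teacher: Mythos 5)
Your proposal is correct and follows essentially the same route as the paper: base-change $B_{J,log}\otimes^{L_\square}_{B_J}B_I\cong B_{I,log}$ plus associativity for the first claim, with the $B_J$-linearity of the total monodromy (because $\tfrac{d}{dU}$ kills $B_I\subseteq B_{I,log}$) letting $(-)^{N=0}$ commute with the localization; and for nuclearity, stability of solid nuclearity under base change applied to $R\Gamma_{\HK,*}(X_C)\in D\nuc(\check C)$ together with closure of $D\nuc$ under colimits and fibers. The only difference is presentational: you make explicit the decomposition $B_{I,log}\cong\colim_N\bigoplus_{n\le N}B_I\cdot U^n$, where the paper simply invokes base-change stability and checking nuclearity on cohomology groups.
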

 
 \begin{proof} 
 	The first claim follows from the following compuatation:
 	$$\begin{aligned}
 		&R\Gamma_{\HK,*,B_I}(X_C,r)=(R\Gamma_{\HK,*}(X_C)\{r\}\otimes^{L_\square}_{\check{C}} B_{I,log})^{N=0}\\
 		&=(R\Gamma_{\HK,*}(X_C)\{r\}\otimes^{L_\square}_{\check{C}} (B_{J,log}\otimes^{L_\square}_{B_{J}} B_I))^{N=0}\\
 		&=(R\Gamma_{\HK,*}(X_C)\{r\}\otimes^{L_\square}_{\check{C}} B_{J,log})^{N=0}\otimes^{L_\square}_{B_{J,}} B_I=R\Gamma_{\HK,*,B_J}(X_C,r)\otimes^{L_\square}_{B_{J}} B_I
 	\end{aligned}$$
 	where the third equality holds because $B_I$ is annihilated by $N$.\\
 	 	
 	 For the second claim, since nuclearity is stable under base change and can be checked on cohomology groups (\cite[Theorem. A.17(iii)]{bosco2023rational}), 	it follows from that $R\Gamma_{\HK,*}(X_C)\in D\nuc(\check{C})$ and that monodromy operator is $B_I$-linear.
 \end{proof}

Now we define the triple
$$R\Gamma_{\HK,*}^{B,\cE}(X_C,r):=(R\Gamma_{\HK,*,B_{[u,v]}}(X_C,r),\rho_{\HK,*}\otimes \rho_{B_{[u,v]}},\varphi_{\HK,*}\{r\}\otimes\varphi) $$
where 
$$\rho_{\HK,*}\otimes\rho_{B_{[u,v]}}: \bQ_{p,\square}[G_K]\otimes^{L_\square}_{\bQ_{p}} R\Gamma_{\HK,*,B_{[u,v]}}(X_C,r)\to R\Gamma_{\HK,*,B_{[u,v]}}(X_C,r) $$
is induced from the $G_K$-action on Hyodo-Kato cohomology and $B_{[u,v]}$ and
$$\varphi_{\HK,*}\{r\}\otimes \varphi: R\Gamma_{\HK,*,B_{[u,v]}}(X_C,r)\otimes_{B_{[u,v]},\varphi}^{L_\square} B_{[u,v/p]}\stackrel{\simeq}\longrightarrow R\Gamma_{\HK,*,B_{[u,v/p]}}(X_C,r) $$ 
is induced from Frobenius on Hyodo-Kato cohomology and $\varphi:B_{[pu,v]}\stackrel{\simeq}\to B_{[u,v/p]}$. \par
The triple $R\Gamma_{\HK,*}^{B,\cE}(X_C,r)$ defines an object in $D\nuc(B)^{\varphi,G_K}$ and we get the associated Hyodo-Kato sheaf 
$$\cE_{\HK,*}(X_C,r):=\cE_{\FF}(R\Gamma_{\HK,*}^{B,\cE}(X_C,r)).$$
We set $R\Gamma_{\HK,*}^B(X_C,r):=R\Gamma(\FF,\cE_{\HK,*}(X_C,r))\in D(\bQ_{p,\square}[G_K])$.

\begin{remark}\label{remarkslope}
Recall that if $D$ is a finite dimensional $(\varphi,N)$-module over $\check{C}$, we have an exact sequence 
 $$0\to (D\otimes_{\check{C}} B)\stackrel{\beta}\longrightarrow (D\otimes_{\check{C}} B_{log})\stackrel{N}\longrightarrow (D\otimes_{\check{C}} B_{log})\to 0
 $$
where $\beta$ is the $\varphi$-equivariant isomorphism (\cite[Sec. 10.3.2]{fargues2019courbes})
 $$ \beta: D\otimes_{\check{C}} B\to (D\otimes_{\check{C}} B_{log})^{N=0};\ y\mapsto \sum_{i\geq 0} \frac{(-1)^i}{i!} N^i(y)X^i.$$ The right exactness can be checked easily by induction on $m$ such that $N^m=0$.

 Assume $X$ is a smooth Stein space over $K$ and $\{X_n\}$ is a covering of $X$ by adapted naive interior of smooth dagger affinoids. Then $H^i_{\HK}(X_{n,C})$ and $H^i_{\HK,c}(X_{n,C})$ are finite dimensional. The underlying complex of $(R\Gamma_{\HK,c}(X_{n,C},r)\otimes_{\check{C}}^{L_\square} B_{[u,v],log})^{N=0}$ is a perfect complex over $B_{[u,v],\square}$ so we can form the canonical truncation $H^i(\cE_{\HK,*}(X_{n,C},r))$.   By duality of Hyodo-Kato cohomology, the Frobenius slopes of $H^i_{\HK}(X_{n,C})$ and $H^i_{\HK,c}(X_{n,C})$ are non-negative and $\leq d$ (In fact, between $[\max\{i-d,0\},\min\{i,d\}]$). If $r\geq d$, we find $H^i(\cE_{\HK,*}(X_{n,C},r))$ are vector bundles of non-negative slopes on $\FF$.
 \end{remark}

\subsubsection{Hyodo-Kato morphism}

We lift the Hyodo-Kato morphism to these sheaves. Let $*\in\{\emptyset,c\}$, and $r\in \bN$. For a smooth Stein space $X$ over $K$, recall that we have $G_K$-equivariant Hyodo-Kato isomorphisms (\cite[Sec. 4]{niziol2021cohomology}, \cite[Sec. 3.2]{achinger2025compactlysupportedpadicproetale})
$$\iota_{\HK,*}: R\Gamma_{\HK,*}(X_C)\otimes_{\check{C}}^{L_\square} B_{\dR}^+\stackrel{\simeq}\longrightarrow R\Gamma_{\dR,*}(X_C/B_{\dR}^+)\simeq R\Gamma_{\dR,*}(X)\otimes_K^{L_\square} B_{\dR}^+.$$  
Compose it with $B_{[u,v],log}\to B_{[u,v]}/t^r=B_{\dR}^+/t^r$, it defines a map of solid $B_{[u,v]}$-modules
$$\iota_{\HK,*}: R\Gamma_{\HK,*,B_{[u,v]}}(X_C,r)\to R\Gamma_{\dR,*,B_{[u,v]}}(X_C,r) $$
such that the following diagram commutes
$$\xymatrix{
	R\Gamma_{\HK,*,B_{[u,v]}}(X_C,r)\ar^{\iota_{\HK,*}}[r] \ar_{\varphi_{\HK,*}\otimes \varphi}[d] & R\Gamma_{\dR,*,B_{[u,v]}}(X_C,r)\ar[d]\\
	 R\Gamma_{\HK,*,B_{[u,v/p]}}(X_C,r)\ar[r] & 0.
}$$
Therefore, it defines a map in $D\nuc(B)^{\varphi,G_K}$
$$\iota_{\HK,*}: R\Gamma_{\HK,*}^{B,\cE}(X_C,r)\to R\Gamma_{\dR,*}^{B,\cE}(X_C,r)$$
and induces a morphism between nuclear sheaves
$$\iota_{\HK,*}: \cE_{\HK,*}(X_C,r)\to \cE_{\dR,*}(X_C,r).$$

\subsubsection{Syntomic complexes}
Now we glue the Hyodo-Kato complex and de Rham complex together to get the syntomic complex.
\begin{definition} Let $*\in\{\emptyset,c\}$ and $r\in \bN$. Assume $X$ is a smooth connected Stein space over $K$. We set
  $$R\Gamma_{\syn,*}^{B,\cE}(X_C,r):=[R\Gamma_{\HK,*}^{B,\cE}(X_C,r)\stackrel{\iota_{\HK,*}}\longrightarrow R\Gamma_{\dR,*}^{B,\cE}(X_C,r)].$$
By \cite[Lemma 5.40]{andreychev2021pseudocoherent}, one sees it defines an object in $D\nuc(B)^{\varphi,G_K}$ and we call it (compactly supported) syntomic module. The corresponding syntomic sheaf is 
  $$\cE_{\syn,*}(X_C,r):=\cE_{\FF}(R\Gamma_{\syn,*}^{B,\cE}(X_C,r)).$$
  We put $R\Gamma^B_{\syn,*}(X_C,\bQ_p(r)):=R\Gamma(\FF,\cE_{\syn,*}(X_C,r))\in D(\bQ_{p,\square}[G_K])$.
\end{definition}

\begin{lemma}\label{sheavesaregood}
    Let $\cM_r\in D\nuc(B)^{\varphi,G_K}$ be one of $\cE_{\syn,c}(X_C,r), \cE_{\HK,c}(X_C,r), \cE_{\dR,c}(X_C,r)$ and $M_r\in D\nuc(B_{[u,v],\square})$ be the underlying $B_{[u,v],\square}$-module of $\cM_r$. Then $$R\underline{\Hom}_{B_{[u,v]}}(M_r,B_{[u,v]})\in D\nuc(B_{[u,v],\square}).$$
\end{lemma}

\begin{proof}
    The claim for $\cE_{\syn,c}(X_C,r)$ will follow from the claim for the other two.\\
    For $\cE_{\HK,c}(X_C,r)$, it suffices to check it on the cohomology. By choosing a covering by adapted naive interior, the result follows from that the cohomology is a sequential inverse limit of finite projective $B_{[u,v]}$-modules and nuclear $B_{[u,v],\square}$-modules are stable under sequential inverse limit (\cite[Theorem A.43]{bosco2021p}).\\
    For $\cE_{\dR,c}(X_C,r)$, it follows from the explicit expression of $R\Gamma_{\dR,c,B_{[u,v]}}(X_C,r)$ and that $R\underline{\Hom}_K(H^d_c(X,\Omega^i),K)\simeq \Omega^{d-i}(X)[0]$ is solidly nuclear. 
\end{proof}

Via a change of period ring argument, one deduces the following result from classical period isomorphisms (cf. \cite[Theorem 6.9]{niziol2021cohomology}\cite[Theorem 6.13]{achinger2025compactlysupportedpadicproetale}).

\begin{proposition}[{\cite[Lemma 3.12, 3.15]{geometricdual}}]\label{propositionperiodisomorphism}
	Let $*\in\{\emptyset,c\}$ and $X$ be a smooth Stein space over $K$. There is a $G_K$-equivariant quasi-isomorphism in $D(\bQ_{p,\square})$
	$$\tau^{\leq r}R\Gamma^B_{\syn,*}(X_C,\bQ_p(r))\stackrel{\simeq}\longrightarrow \tau^{\leq r}R\Gamma_{\proet,*}(X_C,\bQ_p(r)) $$
\end{proposition}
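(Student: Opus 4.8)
The plan is to reduce the statement to the already-known comparison on the level of $\bQ_p$-vector spaces, namely Theorem \ref{theoremclassicalperiod}, by identifying the global sections over the Fargues-Fontaine curve of the syntomic sheaf $\cE_{\syn,*}(X_C,r)$ with the usual syntomic complex computing pro-\'etale cohomology. First I would unwind the definitions: by construction $\cE_{\syn,*}(X_C,r) = \cE_{\FF}([R\Gamma^{B,\cE}_{\HK,*}(X_C,r) \to R\Gamma^{B,\cE}_{\dR,*}(X_C,r)])$, and since $\cE_{\FF}$ and $R\Gamma(\FF,-)$ are exact, $R\Gamma^B_{\syn,*}(X_C,\bQ_p(r))$ is the fiber of the induced map $R\Gamma^B_{\HK,*}(X_C,r) \to R\Gamma^B_{\dR,*}(X_C,r)$. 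So the task splits into computing these two global-section complexes separately and comparing with the $(\varphi,N)$ and filtered $B_{\dR}^+$ pieces appearing in the exact sequences \eqref{exactsequence1}, \eqref{exactsequence2}.

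For the de Rham part, I would use that $R\Gamma^{B,\cE}_{\dR,*}(X,r)$ has zero Frobenius-base-change (it is killed after inverting $t$, i.e.\ $R\Gamma_{\dR,*,B_{[u,v]}}(X,r)\otimes^L_{B_{[u,v],\square}} B_{[u,v/p]} = 0$), so by Lemma \ref{lemmaglobalsection} its global sections are just $R\Gamma_{\dR,*,B_{[u,v]}}(X,r)$ itself, which (because $B_{[u,v]}/t^a \cong B_{\dR}^+/t^a$) is exactly $DR_*(X,r)$ from \eqref{formulaofDRcomplex}. For the Hyodo-Kato part, applying Lemma \ref{lemmaglobalsection} to $R\Gamma^{B,\cE}_{\HK,*}(X_C,r)$ gives the two-term complex $[M_{[u,v]} \xrightarrow{\varphi_M'-1} M_{[u,u]}]$ where $M = (R\Gamma_{\HK,*}(X_C)\{r\}\otimes^{L_\square}_{\check C} B_{-,log})^{N=0}$; using the exact sequence from Remark \ref{remarkslope} (the $\beta$-isomorphism identifying $(D\otimes B_{log})^{N=0}$ with $D\otimes B$) together with the standard manipulation of the two intervals $[u,v],[u,u]$ under $\varphi$, this should recover $[R\Gamma_{\HK,*}(X_C)\otimes^{L_\square}_{\check C}\hat B^+_{\st}]^{\varphi=p^r,N=0}$ up to the syntomic-style fiber, i.e.\ the term $HK_*(X_C,r)$ of Theorem \ref{theoremclassicalperiod}. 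Assembling, $R\Gamma^B_{\syn,*}(X_C,\bQ_p(r))$ is the fiber $[HK_*(X_C,r) \to DR_*(X_C,r)]$, which is precisely the (condensed) syntomic complex whose truncation $\tau^{\leq r}$ computes $\tau^{\leq r}R\Gamma_{\proet,*}(X_C,\bQ_p(r))$ by the classical syntomic comparison theorem, and one checks all identifications are $G_K$-equivariant since the $G_K$-action on $B$, $B_{log}$ commutes with $\varphi$ and $N$.

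The main obstacle I expect is the bookkeeping in the Hyodo-Kato computation: one must carefully match the normalization conventions — the twist $\{r\}$ dividing Frobenius by $p^r$, the identification of the completed period ring $\hat B^+_{\st} = A_{\crys}\langle t_p[\tilde p]^{-1}-1\rangle^\wedge[1/p]$ appearing in \eqref{exactsequence1} with the section of the relevant line bundle on $Y_{\FF}$ over $[u,u]$, and the fact that $\varphi_M'$ in Lemma \ref{lemmaglobalsection} incorporates the transport across $Y_{[v,v]}\cong Y_{[u,u]}$. This is the content of \cite[Lemma 3.12, 3.15]{geometricdual} (Proposition \ref{propositionperiodisomorphism}), so I would cite that for the verification of the period isomorphism itself and only spell out the translation between the global-sections description and the $(\varphi,N)$-module language. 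A secondary technical point is justifying that the fiber commutes with $R\Gamma(\FF,-)$ and with the truncation $\tau^{\leq r}$ in the appropriate way; this is harmless since $\tau^{\leq r}$ is applied after taking global sections and all functors in sight are exact on the relevant stable $\infty$-categories.
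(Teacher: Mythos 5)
Your proposal is correct and follows essentially the same route as the paper, which gives no independent proof but cites \cite[Lemma 3.12, 3.15]{geometricdual} and notes that the result follows from the classical period isomorphisms of Theorem \ref{theoremclassicalperiod} by a change of period rings. Your unwinding of the global sections (the de Rham sheaf contributing $DR_*(X,r)$ because $t$ is invertible on $B_{[u,v/p]}=B_{[u,u]}$, and the Hyodo-Kato sheaf contributing $HK_*(X_C,r)$ via the fundamental exact sequence) is exactly the content of the cited lemmas, and deferring that bookkeeping to the citation is consistent with what the paper itself does.
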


\subsubsection{Poincare duality of syntomic sheaves}
The main result of \cite{geometricdual} is to prove the following Poincare duality of syntomic sheaves on the Fargues-Fontaine curve, which lifts the de Rham and Hyodo-Kato dualities to sheaves on the curve.

\begin{theorem}[{\cite[Theorem 5.22]{geometricdual}}] \label{theoremgeometric} 
 	Let $X$ be a smooth connected Stein space over $K$ of dimension $d$. Assume $r,r'\geq 2d$ and $s=r+r'-d$. Then there is a natural quasi-isomorphism of complexes of $G_K$-equivariant sheaves
 	\begin{equation}
 		\cE_{\syn}(X_C,\bQ_p(r))\cong R\sH om_{\FF}(\cE_{\syn,c}(X_C,\bQ_p(r'))[2d],\cO\otimes \bQ_p(s)).
 	\end{equation}
 \end{theorem}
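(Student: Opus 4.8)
The plan is to split the syntomic sheaf into its Hyodo--Kato and de Rham parts, dualize each separately, and glue along the Hyodo--Kato morphism. By construction $\cE_{\syn,*}(X_C,r)=\operatorname{fib}\bigl(\iota_{\HK,*}\colon\cE_{\HK,*}(X_C,r)\to\cE_{\dR,*}(X_C,r)\bigr)$ in $D\nuc(B)^{\varphi,G_K}$, and $R\sH om_{\FF}(-,\cO\otimes\bQ_p(s))$ is exact, so it suffices to produce: (i) a $G_K$-equivariant quasi-isomorphism $\cE_{\HK}(X_C,r)\simeq R\sH om_{\FF}(\cE_{\HK,c}(X_C,r')[2d],\cO\otimes\bQ_p(s))$; (ii) a $G_K$-equivariant quasi-isomorphism $\cE_{\dR}(X_C,r)[-1]\simeq R\sH om_{\FF}(\cE_{\dR,c}(X_C,r')[2d],\cO\otimes\bQ_p(s))$; and (iii) a homotopy identifying $R\sH om_{\FF}(\iota_{\HK,c}[2d],\cO\otimes\bQ_p(s))$, transported along (i) and (ii), with the connecting map $\cE_{\dR}(X_C,r)[-1]\to\cE_{\HK}(X_C,r)$ of the fibre sequence defining $\cE_{\syn}(X_C,r)$. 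Granting these, rotating and dualizing the fibre sequence $\cE_{\dR}(X_C,r)[-1]\to\cE_{\syn}(X_C,r)\to\cE_{\HK}(X_C,r)\xrightarrow{\iota_{\HK}}\cE_{\dR}(X_C,r)$ yields the theorem. The shift $[-1]$ appears in (ii) but not in (i) precisely so that the two fibre sequences line up: $\cE_{\dR,*}(X_C,r)\simeq i_{\infty,*}R\Gamma_{\dR,*}(X_C,r)$ is a torsion sheaf supported at $y_\infty$, and internal Hom of such a sheaf into a line bundle picks up the one-dimensional Serre-type shift on the curve (on the chart $Y_{[u,v]}$, $R\sH om_{B_{[u,v]}}(B_{[u,v]}/t^k,B_{[u,v]})\simeq B_{[u,v]}/t^k[-1]$), whereas $\cE_{\HK,*}(X_C,r)$ is, for $r\ge d$, a bounded complex of vector bundles (Remark~\ref{remarkslope}), on which internal Hom into a line bundle is degree-preserving.

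For (ii) I would use Serre duality on the Stein space $X$. Since $\cE_{\dR,*}(X_C,r)\simeq i_{\infty,*}R\Gamma_{\dR,*}(X_C,r)$, the right-hand side of (ii) is computed, up to the shift $[-1]$ above, by the $B_{\dR}^+$-linear dual of the explicit complexes \eqref{formulaofDRcomplex}, twisted by $\bQ_p(s)$. Plugging in Serre duality $H^d_c(X,\Omega^{j})\simeq\Omega^{d-j}(X)^{*}$ (already used in the proof of Theorem~\ref{theorem2}), comparing the Hodge-filtration jumps via $F^kB_{\dR}^+/F^{m}\cong t^kB_{\dR}^+/t^{m}$, and using the pairing $B_{\dR}^+/t^{m}\otimes B_{\dR}^+/t^{m'}\to B_{\dR}^+/t^{m+m'}$, one identifies the dual of $R\Gamma_{\dR,c}(X_C,r')$ with $R\Gamma_{\dR}(X_C,r)$; the twist $s=r+r'-d$ is exactly what makes all coefficient quotients and Tate twists match. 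Both sides are annihilated by restriction to $Y_{[u,v/p]}$, so the identification, together with the matching of $G_K$-actions via the Tate twist, upgrades to an isomorphism in $D\nuc(B)^{\varphi,G_K}$, hence of sheaves.

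For (i) the input is Poincar\'e duality for Hyodo--Kato cohomology of Stein spaces as $(\varphi,N,G_K)$-modules over $\check{C}$: a $G_K$-equivariant quasi-isomorphism $R\Gamma_{\HK}(X_C)\simeq R\Hom_{\check{C}}(R\Gamma_{\HK,c}(X_C),\check{C})[-2d]$ with Frobenius twisted by $p^{-d}$ and monodromy the negative transpose, coming from duality of Hyodo--Kato cohomology (cf.\ \cite{colmez2020cohomology}) repackaged as in \cite{geometricdual}. I would apply $(-\otimes^{L_\square}_{\check{C}}B_{[u,v],log})^{N=0}$ to both sides; by exactness of $(D\otimes_{\check{C}}B_{log})^{N=0}$ on finite-dimensional $(\varphi,N)$-modules (Remark~\ref{remarkslope}, via the short exact sequence with $\beta$) this functor commutes with the $\check{C}$-linear duality up to the expected twist, and by the Hyodo--Kato slope bounds the resulting objects of $D\nuc(B)^{\varphi,G_K}$ have cohomology given by vector bundles on $\FF$. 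For $r,r'\ge 2d$ these bundles, and their internal Homs into $\cO\otimes\bQ_p(s)$ with $s=r+r'-d$, have slopes in the range where $R\sH om_{\FF}$ agrees with the naive $\cO_{\FF}$-linear dual, so the $\check{C}$-level duality translates into (i) via $\cE_{\FF}$. The Frobenius/slope bookkeeping of $\{r\}$ versus $\{r'\}$ against $\bQ_p(s)$ is then a direct check.

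The main obstacle is (iii): showing that the dual of the Hyodo--Kato morphism is again the Hyodo--Kato morphism, i.e., that the dualities (i) and (ii) are compatible along $\iota_{\HK}$. This reduces to the compatibility of fundamental classes --- that the de Rham trace $R\Gamma_{\dR,c}(X_C/B_{\dR}^+)\to B_{\dR}^+(-d)[-2d]$ agrees, after pullback along the Hyodo--Kato isomorphism $\iota_{\HK,c}\colon R\Gamma_{\HK,c}(X_C)\otimes^{L_\square}_{\check{C}}B_{\dR}^+\xrightarrow{\sim}R\Gamma_{\dR,c}(X_C/B_{\dR}^+)$, with the $B_{\dR}^+$-base change of the Hyodo--Kato trace $R\Gamma_{\HK,c}(X_C)\to\check{C}(-d)[-2d]$. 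This is a known compatibility in the Hyodo--Kato formalism, but one must track it through the sheaf-theoretic constructions and check that everything --- $R\sH om$, the base change along $\check{C}\to B_{log}$, the functor $(-)^{N=0}$, restriction along $i_\infty$, and $R\Gamma(\FF,-)$ --- commutes in the solid/nuclear derived setting; this is where the bulk of the technical work lies. Once (iii) is established, passing to fibres and applying $\cE_{\FF}$ gives the stated quasi-isomorphism $\cE_{\syn}(X_C,\bQ_p(r))\simeq R\sH om_{\FF}(\cE_{\syn,c}(X_C,\bQ_p(r'))[2d],\cO\otimes\bQ_p(s))$.
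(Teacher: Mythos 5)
This statement is quoted from \cite{geometricdual} (Theorem 5.22) and the paper under review gives no proof of it, so there is no internal argument to compare against; your outline — splitting $\cE_{\syn,*}$ into its Hyodo--Kato and de Rham constituents, dualizing each (with the extra shift on the de Rham side coming from $R\sH om_{B_{[u,v]}}(B_{[u,v]}/t^k,B_{[u,v]})\simeq t^{-k}B_{[u,v]}/B_{[u,v]}[-1]$), and gluing along $\iota_{\HK}$ via compatibility of the de Rham and Hyodo--Kato trace maps — is precisely the strategy the paper attributes to that reference, and your shift and twist bookkeeping is consistent. The one point you should make explicit in part (i) is the passage to the limit over a Stein exhaustion $\{X_n\}$: $H^i_{\HK}(X_C)$ itself is only pro-finite-dimensional (Remark \ref{remarkslope} applies to the $X_{n,C}$, not to $X_C$), so the $(\varphi,N,G_K)$-level duality and the commutation of $(-\otimes^{L_\square}_{\check{C}}B_{[u,v],log})^{N=0}$ and $R\sH om_{\FF}$ with the relevant $R\lim_n$ and $\colim_n$ must be verified in the nuclear setting, parallel to the limit step at the end of Proposition \ref{propositionlocaltateHKsheaf}.
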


\subsection{Local Tate duality for the syntomic sheaf}
$ $

In this section, we prove the local Tate duality of the compactly supported syntomic sheaf. 
 
 Let $\cM_r=(M,\rho_M,\varphi_M)\in D\nuc(B)^{\varphi,G_K}$ such that $R\underline{\Hom}_{B_{[u,v]}}(M,B_{[u,v]})\in D\nuc(B_{[u,v],\square})$ (we only define internal Hom under such an assumption). There is a natural map 
 $$\begin{aligned}
 R\sH om_{\FF}(\cM_r, \cO_{\FF}\otimes \bQ_p(1)[2]) \ &\otimes_{\FF}^L  R\sH om_{\FF}(\cO_{\FF},\cM_r)\ \\
 & \to  R\sH om_{\FF}(\cO_{\FF},\cO_{\FF}\otimes\ \bQ_p(1)[2])=\cO_{\FF}\otimes \bQ_p(1)[2].
 \end{aligned}$$
 Taking cohomology on the curve and Galois cohomology, we get a pairing 
$$R\Gamma(G_K,\bD_{\FF}(\cM_r))\otimes_{\bQ_p}^{L_\square} R\Gamma(G_K,R\Gamma(\FF,\cM_r))\to R\Gamma(G_K,\bQ_p(1))[2]\stackrel{\tau^{\geq 0}}\longrightarrow \bQ_p$$
where $\bD_{\FF}(\cM_r):=R\Hom_{\FF}(\cM_r,\cO_{\FF}\otimes \bQ_p(1)[2])\in D(\bQ_{p,\square}[G_K])$. Recall that we used $R\Hom_{\FF}(-,-)$  to denote the functor $R\Gamma(\FF,R\sH om(-,-))$.

It induces a morphism
\begin{equation}\label{formulapairing}
	\gamma_{\cM_r}:R\Gamma(G_K,\bD_{\FF}(\cM_r)))\longrightarrow \bD_{\bQ_p}(R\Gamma(G_K,R\Gamma(\FF,\cM_r))).
	\end{equation}
 where $\bD_{\bQ_p}(-):=R\Hom_{\bQ_p}(-,\bQ_p)$. The morphism $\gamma_{\cM_r}$ is functorial in $\cM_r$.

  Let $X$ be a smooth Stein space over $K$ geometrically connected of dimension $d$.
If $M_r=\cE_{\syn,c}(X_C,\bQ_p(r))$ (use Lemma \ref{sheavesaregood}), by definition $R\Gamma_{\syn,c}^B(X_C,\bQ_p(r))=R\Gamma(\FF,\cM_r)$ and the morphism in \eqref{formulapairing} will be denoted by 
 $$\gamma^r_{\syn}:R\Gamma(G_K,\bD_{\FF}(\cE_{\syn,c}(X_C,\bQ_p(r))))\longrightarrow \bD_{\bQ_p}(R\Gamma(G_K,R\Gamma^B_{\syn,c}(X_C,\bQ_p(r))))$$

\begin{theorem}\label{theoremlocaltatesheaf1}
 	For $r\geq d$, the morphism $\gamma^r_{\syn}$ is a quasi-isomorphism.
 	 \end{theorem}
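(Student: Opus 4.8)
The plan is to first cut $\gamma^r_{\syn}$ along the defining fiber sequence $\cE_{\syn,c}(X_C,r)=[\cE_{\HK,c}(X_C,r)\xrightarrow{\iota_{\HK,c}}\cE_{\dR,c}(X_C,r)]$. Since $\bD_{\FF}(-)$, $R\Gamma(G_K,R\Gamma(\FF,-))$ and $\bD_{\bQ_p}(-)$ are exact and the map $\gamma_{\cM}$ of \eqref{formulapairing} is functorial in $\cM\in D\nuc(B)^{\varphi,G_K}$, the morphism $\gamma^r_{\syn}$ sits in a morphism of distinguished triangles whose remaining two legs are $\gamma_{\cE_{\HK,c}(X_C,r)}$ and $\gamma_{\cE_{\dR,c}(X_C,r)}$; it therefore suffices to prove that each of the latter is a quasi-isomorphism, and I treat the two parts separately.

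\textbf{The de Rham part.} Here $\cE_{\dR,c}(X_C,r)=i_{\infty,*}R\Gamma_{\dR,c}(X_C,r)$ is supported at $\infty\in\FF$, whose complete local ring is $B^+_{\dR}$, and $R\Gamma_{\dR,c}(X_C,r)$ is built from the terms $H^d_c(X,\Omega^i)\otimes^{\square}_K(B^+_{\dR}/t^{r-i})$ with $H^d_c(X,\Omega^i)$ carrying the trivial $G_K$-action. Local duality at $\infty$ — the dualizing object of $\cO_{\FF}$ there being the conormal line $C(-1)$ placed in degree $1$, so that $R\sH om_{\FF}(i_{\infty,*}M,\cO_{\FF}\otimes\bQ_p(1)[2])\simeq i_{\infty,*}R\Hom_{B^+_{\dR}}(M,C[1])$ — identifies $\bD_{\FF}(\cE_{\dR,c}(X_C,r))$, on global sections, with a term-wise dual of $R\Gamma_{\dR,c}(X_C,r)$, suitably twisted and shifted. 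One is then reduced to showing, for a $K$-vector space $N$ with trivial $G_K$-action of the type $H^d_c(X,\Omega^i)$ and each $m\geq 1$, that $\gamma$ induces a perfect $\bQ_p$-pairing between the $G_K$-cohomologies of $N\otimes^{\square}_K(B^+_{\dR}/t^m)$ and of $N^*\otimes^{\square}_K R\Hom_{B^+_{\dR}}(B^+_{\dR}/t^m,C)[1]$. This is the explicit computation: via the $t$-adic filtration and Tate's theorem $H^\bullet(G_K,C(j))=0$ for $j\neq 0$, both sides collapse onto the surviving graded piece and become $R\Gamma(G_K,C)$, and $\gamma$ becomes the tautological quasi-isomorphism $R\Gamma(G_K,C)[1]\simeq\bD_{\bQ_p}(R\Gamma(G_K,C))$ — equivalently the $K$-bilinear pairing $(a,b)\mapsto\mathrm{Tr}_{K/\bQ_p}(ab)$, using the nonzero boundary map $H^1(G_K,C)\to H^2(G_K,\bQ_p(1))$. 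The infinite dimensionality of $N$ is harmless: $N$ is nuclear Fr\'echet hence flat for $\otimes^{\square}_K$ (Proposition \ref{propositionlimitsolidtensorexchange}), $N^*$ is of compact type with $R\Hom_K(N,K)=N^*$ (Lemma \ref{lemmacompacttypedual} and reflexivity), and these operations commute with the Galois cohomology — which is just $-\otimes N$ (resp. $-\otimes N^*$) of the period-ring computation — and with the (co)limits involved.

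\textbf{The Hyodo--Kato part.} Writing $X$ as an increasing union $\bigcup_n X_n$ of adapted naive interiors of smooth dagger affinoids, and using that compactly supported cohomology — hence $\cE_{\HK,c}(X_C,r)$, and, after applying $\bD_{\FF}$, $R\Gamma(G_K,R\Gamma(\FF,-))$ and $\bD_{\bQ_p}$, also the two sides of $\gamma_{\cE_{\HK,c}}$ — commutes with this colimit, turning it into the corresponding $R\lim$, I would reduce to the case that $X$ is a naive interior, so that $H^\bullet_{\HK,c}(X_C)$ is finite-dimensional over $\check C$. For such $X$ and $r\geq d$, Remark \ref{remarkslope} shows $\cE_{\HK,c}(X_C,r)$ is a perfect complex of vector bundles of non-negative slope on $\FF$, so by Lemma \ref{lemmaglobalsection} both $R\Gamma^B_{\HK,c}(X_C,r)=R\Gamma(\FF,\cE_{\HK,c}(X_C,r))$ and $\bD_{\FF}(\cE_{\HK,c}(X_C,r))$ are represented by explicit two-term complexes whose terms are of finite rank over the rings $B_I$. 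By \cite[Theorem A]{fontaine2019almost} these carry almost $\bC_p$-representations of $G_K$, whose Galois cohomology is finite-dimensional by \cite[Theorem 6.1]{katopresque}, and the assertion that $\gamma_{\cE_{\HK,c}}$ is a quasi-isomorphism becomes the Tate-type duality for almost $\bC_p$-representations furnished by Fontaine's theory. The work is to check that this abstract duality is implemented by the geometric pairing $\gamma$; this is done by unwinding Serre duality on the Fargues--Fontaine curve together with the fundamental exact sequence relating the line bundle $\cO_{\FF}(1)$ and $\cO_{\FF}\otimes\bQ_p(1)$ (they differ by a skyscraper at $\infty$ whose contribution is a $\bC_p$-representation, itself controlled by Fontaine's theory).

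\textbf{Main obstacle.} The delicate point is the Hyodo--Kato part: matching Fontaine's abstract duality for almost $\bC_p$-representations with the concrete pairing $\gamma$ — this duality is \emph{not} the naive sheaf-theoretic dual on $\FF$, so the identification needs care — together with the (technical but routine) verification that the reduction to naive interiors is compatible with $\gamma$ and with the various (co)limits. The de Rham part, though essentially a computation, also requires the solid functional analysis recalled above in order to handle the infinite-dimensional coefficients $H^d_c(X,\Omega^i)$ cleanly.
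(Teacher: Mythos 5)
Your proposal is essentially the paper's own proof: the fiber-sequence reduction is the paper's diagram \eqref{diagramcommutes}; the de Rham part is Proposition \ref{propositionDRlocaltatesheaf} (Tate's theorem plus the computation \eqref{computation1} and Lemma \ref{lemmacompacttypedual}); and the Hyodo--Kato part is Proposition \ref{propositionlocaltateHKsheaf}, where the compatibility of Fontaine's duality with the geometric pairing that you flag as the main obstacle is handled exactly as you anticipate, via GAGA and the equivalence $R\Gamma(\FFalg,-)\colon D^b(\cM(G_K))\simeq D^b(\sC(G_K))$. The one slip is your local-duality formula at $\infty$ --- the dualizing module is the Matlis dual $t^{-m}B^+_{\dR}/B^+_{\dR}$ (suitably twisted), not $C$, as the paper's \eqref{computation1} shows --- but the discrepancy consists of graded pieces $C(-i)$ with $i\neq 0$ that are killed by $R\Gamma(G_K,-)$, so it does not affect the argument.
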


\begin{remark}\label{remarkequivariant}
	Assume $\cE,\cF\in D\nuc(B)^{\varphi,G_K}$ and note that the action of $\varphi$ and $G_K$ commutes. Then it follows from the explicit complex computing $R\Gamma(G_K,-)$ (cf. Lemma \ref{lemmacondenseandcontinuous}) and $R\Gamma(\FF,-)$ (cf. Lemma \ref{lemmaglobalsection}) that we have a quasi-isomorphism
	$$R\Hom_{D\nuc(B)^{\varphi,G_K}}(\cO,\cE)\simeq R\Gamma(G_K,R\Gamma(\FF,\cE)).$$
	By adjunction, we have quasi-isomorphisms
	$$ \begin{aligned}
	R\Hom_{D\nuc(B)^{\varphi,G_K}}(\cF,\cE)&\simeq R\Hom_{D\nuc(B)^{\varphi,G_K}}(\cO,R\sH om(\cF,\cE))
	\\ &\simeq R\Gamma(G_K,R\Gamma(\FF,R\sH om(\cF,\cE))).
	 \end{aligned}$$
	 Thus, the duality could be understood as duality of 'equivariant sheaves' on $\FF$: When $\cM_r=\cE_{\syn,c}(X_C,\bQ_p(r))$
	 $$R\Hom_{D\nuc(B)^{\varphi,G_K}}(\cM_r,\cO\otimes\bQ_p(1)[2])\simeq \bD_{\bQ_p}(R\Hom_{D\nuc(B)^{\varphi,G_K}}(\cO,\cM_r)).$$
	  It is stated as in Theorem \ref{theoremlocaltatesheaf1} for ease of computations and applications in arithmetic duality.
	\end{remark}
	
	Taking $\cM_r$ to be the complex $\cE_{\HK,c}(X_C,r)$ (resp. $\cE_{\dR,c}(X_C,r)$) in \eqref{formulapairing}, we get morphisms 
	$$\gamma^r_{\HK}:R\Gamma(G_K,\bD_{\FF}(\cE_{\HK,c}(X_C,r))))\longrightarrow \bD_{\bQ_p}(R\Gamma(G_K,R\Gamma^B_{\HK,c}(X_C,r))).$$
	$$\gamma^r_{\dR}:R\Gamma(G_K,\bD_{\FF}(\cE_{\dR,c}(X_C,r))))\longrightarrow \bD_{\bQ_p}(R\Gamma(G_K,R\Gamma^B_{\dR,c}(X_C,r))).$$

\subsubsection{Duality for the Hyodo-Kato sheaf} $ $
  In \cite{katopresque}, Fontaine introduced $\sC(G_K)$, the abelian category of almost $\bC_p$-representations, which is a strict subcategory of the exact category $\cB(G_K)$ of Banach representations of $G_K$ (cf. \cite[Theorem 5.1]{katopresque}). He proved the following result:
  
  \begin{theorem}[{\cite[Theorem 6.1, Proposition 6.7]{katopresque}}]\label{theoremalmostcpdual} Suppose $X,Y \in \sC(G_K)$.
  	\begin{enumerate}
  		\item For each $n\in \bN$, the $\bQ_p$-vector space $\Ext^n_{\sC(G_K)}(X,Y)$\footnote{The extension groups considered there is the Yoneda extension which coincides with the Hom in the derived category, even if the category does not contain enough injectives (See \cite[\href{https://stacks.math.columbia.edu/tag/06XP}{Tag 06XP}]{stacks-project}).} is finite dimensional and vanishes when $n\geq 3$. One has that 
        \begin{equation}\label{eulercharacter}
        \sum_{i=0}^2 (-1)^i \dim_{\bQ_p} \Ext^i_{\sC(G_K)}(X,Y)=-[K:\bQ_p]h(X)h(Y)
         \end{equation}
         where $h(X),h(Y)\in \bZ$ is the height of the representation.
  		\item If $X$ is a $p$-adic representation of $G_K$, then there is a natural isomorphism
  		$$ \Ext^n_{\sC(G_K)}(X,Y)\cong H^n_{\cont}(G_K,X^*\otimes Y)$$  	
  		\item There is a natural trace map $\Ext^2(X,X(1))\to \bQ_p$ such that the map 
  	$$\Ext^n(X,Y)\times \Ext^{2-n}(Y,X(1))\to \Ext^2(X,X(1))\to \bQ_p$$
  	is a perfect pairing.
  		\end{enumerate}
  \end{theorem}

   The category $\sC(G_K)$ is (essentially) small, thus by \cite[Theorem 8.6.5(vi), Theorem 15.3.1(i)]{kashiwara2006categories}, one can define the derived functor $R\Hom_{\sC(G_K)}(M,-)$ via the Ind-category for a complex $M$ in the bounded (classical) derived category $D^b(\sC(G_K))$. Note that any morphism in $\sC(G_K)$ is strict as a morphism in $\cB(G_K)$, so a sequence in $\sC(G_K)$ is exact if and only if it is exact when see it as condensed vector spaces. Combine it with Theorem \ref{theoremalmostcpdual} (1) (2) and compatibility between condensed and continuous group cohomology (Lemma \ref{lemmacondenseandcontinuous}), we get
   \begin{lemma}
   	Let $Y\in D^b(\sC(G_K))$, there is an isomorphism of finite dimensional condensed $\bQ_p$-vector spaces
   	$$R^i\Gamma(G_K,\underline{Y})\simeq \underline{R^i\Hom_{\sC(G_K)}(\bQ_p, Y)}$$
   \end{lemma}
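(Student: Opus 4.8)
The plan is to reduce, by a d\'evissage, to the case where $Y$ is concentrated in a single degree, and there to combine Fontaine's Theorem \ref{theoremalmostcpdual} with the continuous-to-condensed comparison of Lemma \ref{lemmacondenseandcontinuous}. First I would note that the forgetful functor $\sC(G_K)\to \{\text{solid }\bQ_p\text{-modules with continuous }G_K\text{-action}\}$, $M\mapsto\underline M$, is exact: every morphism in $\sC(G_K)$ is strict in $\cB(G_K)$, so a short exact sequence in $\sC(G_K)$ remains exact after passing to the underlying condensed $\bQ_p$-vector spaces. Since condensed group cohomology $R\Gamma(G_K,-)$ is computed by the bar complex (hence identifies with $R\Hom_{\bQ_{p,\square}[G_K]}(\bQ_p,-)$), this exact functor induces, for every $Y\in D^b(\sC(G_K))$, a natural comparison morphism in $D(\bQ_{p,\square})$
$$\underline{R\Hom_{\sC(G_K)}(\bQ_p,Y)}\longrightarrow R\Gamma(G_K,\underline Y),$$
using that the Yoneda Ext of $\sC(G_K)$ agrees with Hom in its derived category (the footnote to Theorem \ref{theoremalmostcpdual}).

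Both sides of this morphism are triangulated functors of $Y$, and $\underline{(-)}$ is exact, so $H^q(\underline Y)=\underline{H^q(Y)}$. I would therefore compare the two hypercohomology spectral sequences, with $E_2$-pages $\Ext^p_{\sC(G_K)}(\bQ_p,H^q(Y))$ and $H^p(G_K,\underline{H^q(Y)})$ respectively (both converging, since $\Ext^{\geq 3}_{\sC(G_K)}=0$ and $H^{\geq 3}(G_K,-)$ of a $\bQ_p$-representation vanishes); a morphism of spectral sequences that is an isomorphism on $E_2$ is an isomorphism on the abutment. Hence it suffices to treat $Y=M[0]$ for a single object $M\in\sC(G_K)$. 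For such $M$ the derived Ext computes the Yoneda Ext $\Ext^i_{\sC(G_K)}(\bQ_p,M)$, and Theorem \ref{theoremalmostcpdual}(2) with $X=\bQ_p$ (whose dual is again $\bQ_p$, and $\bQ_p\otimes M=M$) gives a natural identification $\Ext^i_{\sC(G_K)}(\bQ_p,M)\cong H^i_{\cont}(G_K,M)$, finite dimensional by Theorem \ref{theoremalmostcpdual}(1). On the other hand $M$ is a Banach $\bQ_p$-vector space with continuous $G_K$-action, so Lemma \ref{lemmacondenseandcontinuous}, together with the finiteness just obtained, yields $\underline{H^i_{\cont}(G_K,M)}\cong H^i(G_K,\underline M)=R^i\Gamma(G_K,\underline M)$.

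The step I expect to be the main obstacle is verifying that the composite isomorphism $\underline{\Ext^i_{\sC(G_K)}(\bQ_p,M)}\cong \underline{H^i_{\cont}(G_K,M)}\cong R^i\Gamma(G_K,\underline M)$ coincides with (the $i$-th cohomology of) the abstract comparison morphism above; concretely, one must check that Fontaine's identification of $\Ext_{\sC(G_K)}$ with continuous Galois cohomology in Theorem \ref{theoremalmostcpdual}(2) is realized through a resolution compatible with the bar complex $n\mapsto\underline{\sC(G_K^n,M)}$ that computes $R\Gamma(G_K,\underline M)$ in the proof of Lemma \ref{lemmacondenseandcontinuous}. Once this naturality is in place, the rest is formal: the exactness of $\underline{(-)}$ on $\sC(G_K)$, the spectral-sequence d\'evissage, and the boundedness supplied by the vanishing in Theorem \ref{theoremalmostcpdual}(1). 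All the substantive content --- the finiteness of the Ext-groups and their period-theoretic comparison with continuous cohomology --- already resides in Fontaine's work, so the lemma is really a repackaging of these facts inside the condensed derived category.
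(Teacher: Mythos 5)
Your proposal is correct and follows essentially the same route as the paper, whose entire "proof" is the paragraph preceding the lemma: exactness of the forgetful functor from $\sC(G_K)$ to condensed $\bQ_p$-vector spaces (via strictness of morphisms in $\cB(G_K)$), Fontaine's finiteness and the identification $\Ext^i_{\sC(G_K)}(\bQ_p,M)\cong H^i_{\cont}(G_K,M)$, and the continuous-to-condensed comparison of Lemma \ref{lemmacondenseandcontinuous}. You merely make explicit the d\'evissage to a single object and flag the naturality check, both of which the paper leaves implicit.
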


   \begin{remark}
       The author does not know if the natural functor between classical derived categories $D^b(\sC(G_K))\to D(\bQ_{p,\square}[G_K])$, induced by inclusion, is fully faithful. To address this issue is subtle, for the first, the fully faithfulness will imply any extension between $\bC_p$ by $\bC_p$, in the category of Banach $\bQ_p$-vector spaces with continuous $G_K$-action, automatically admits a $\bC_p$-linear structure. For the second, we note that there is no nonzero projective objects in $\sC(G_K)$. This is because, by Euler characteristic formula \eqref{eulercharacter}, a projective object $X$ must have height $0$. Since each higher extension group between $X$ and any $Y$ vanishes, \eqref{eulercharacter} again tells that $\Hom_{\sC(G_K)}(X,Y)=0$ for any $Y\in \sC(G_K)$, in particular for $Y=X$.
   \end{remark}
  
  Let $\cM(G_K)$ be the category of coherent $\cO_{\FFalg}[G_K]$-modules where $\FFalg$ is the algebraic Fargues Fontaine curve. Let $\cM^{\geq 0}(G_K)\subseteq \cM(G_K)$ and $\sC^{\geq 0}(G_K)\subseteq \sC(G_K)$ be the subcategories of effective objects (See \cite[Page 2]{fontaine2019almost}). Recall that the objects of $\cM^{\geq 0}(G_K)$ are coherent $\cO_{\FF^{\alg}}[G_K]$-modules  with HN-slope $\geq 0$ and the objects of $\sC^{\geq 0}(G_K)$ are subobjects (in $\sC(G_K)$) of torsion $B_{\dR}^+$-representations. Fontaine proved the following theorem:
  
  \begin{theorem}[{\cite[Theorem A]{fontaine2019almost}}] \label{theoremfontaineequivalence}
  There is an equivalence of categories 
  $$\begin{CD}
\cM^{\geq 0}(G_K)@>H^0(\FFalg,-)>\cong >\sC^{\geq 0}(G_K)
\end{CD}$$
which induces an equivalence of triangulated categories
$$ 
  	R\Gamma(\FFalg,-): D^b(\cM(G_K))\stackrel{\simeq}\longrightarrow D^b(\sC(G_K)).
$$
  \end{theorem}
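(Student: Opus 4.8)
The plan is to reconstruct Fontaine's argument in two stages: first establish the equivalence of abelian categories $H^0(\FFalg,-)\colon\cM^{\geq 0}(G_K)\xrightarrow{\ \cong\ }\sC^{\geq 0}(G_K)$, then bootstrap to the derived statement. Throughout, the organising principle is a dévissage of both sides into two families of ``building blocks'': the line bundles $\cO(\lambda)$ with $\lambda\geq 0$, and the $G_K$-equivariant torsion sheaves — which, because $G_K$ acts on $|\FFalg|$ with $\infty$ as its unique fixed closed point, are exactly the skyscrapers at $\infty$ attached to torsion $B_{\dR}^+$-representations (themselves effective almost $\bC_p$-representations by definition). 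One first checks that $H^0(\FFalg,-)$ sends $\cM^{\geq 0}(G_K)$ into $\sC^{\geq 0}(G_K)$: by the Fargues--Fontaine classification of vector bundles every effective sheaf is, up to an effective torsion modification at $\infty$, an iterated extension of the $\cO(\lambda)$ with $\lambda\geq 0$; one has $H^1(\FFalg,\cO(\lambda))=0$ for $\lambda\geq 0$ and $H^1$ of a skyscraper vanishes, so $H^1(\FFalg,-)$ kills every effective sheaf; and $H^0(\FFalg,\cO(\lambda))$ is a positive Banach--Colmez space — for $\lambda\geq 1$ an extension of a torsion $B_{\dR}^+$-representation by a finite-dimensional $\bQ_p$-representation, for $\lambda=0$ just $\bQ_p$ — hence effective almost $\bC_p$; using stability of $\sC^{\geq 0}(G_K)$ under extensions one concludes.

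For full faithfulness, $H^0(\FFalg,-)$ being an exact functor of abelian categories and both categories being generated by the building blocks, it suffices to show the natural maps on $\Hom$ and $\Ext^1$ between building blocks are isomorphisms. Via the spectral sequence $H^a(G_K,\Ext^b_{\FFalg}(\cF,\cG))\Rightarrow\Ext^{a+b}_{\cM(G_K)}(\cF,\cG)$ on the curve side and Theorem~\ref{theoremalmostcpdual}(1),(2) on the $\sC(G_K)$ side, this reduces to identifying the continuous Galois cohomology of the period modules that occur — $(B^{\varphi=p^d})^{G_K}$, $H^1_{\cont}(G_K,B^{\varphi=p^d})$, $H^i_{\cont}(G_K,\bC_p(j))$, $H^i_{\cont}(G_K,B_{\dR}^+/t^n)$ — with the corresponding $\Ext$-groups in $\sC(G_K)$, compatibly with composition and with the connecting maps of the dévissage. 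These are the classical computations of Tate and Colmez for the Galois cohomology of $\bC_p$, of torsion $B_{\dR}^+$-representations, and of Banach--Colmez spaces; assembling them into the required natural isomorphisms is the technical core of the theorem.

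For essential surjectivity I would induct on $\dim_{\bC_p}W$ using the structure theory of almost $\bC_p$-representations: any $W\in\sC^{\geq 0}(G_K)$ is, after modifying by a finite-dimensional $\bQ_p$-subrepresentation, built by finitely many extensions from $\bC_p$-representations (each realised as $H^0$ of a skyscraper, or — for those with integral Hodge--Tate--Sen weights — of an $\cO(\lambda)$-extension via the period isomorphism of Proposition~\ref{propositionperiodisomorphism}) and torsion $B_{\dR}^+$-representations; one then assembles a coherent $\cF$ with $H^0(\FFalg,\cF)\cong W$ by lifting the intervening extension classes from $\sC(G_K)$ to $\cM(G_K)$ along the full faithfulness already proved, and effectivity of $W$ forces $\cF$ effective. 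Finally, for the derived statement: $H^0(\FFalg,-)$ is the restriction of the triangulated functor $R\Gamma(\FFalg,-)$ to $\cM^{\geq 0}(G_K)$ (since $H^1$ vanishes there), the $\Ext$-comparison above upgrades full faithfulness to $\Ext^i_{\cM(G_K)}(\cF,\cG)\cong\Ext^i_{\sC(G_K)}(H^0\cF,H^0\cG)$ for all effective $\cF,\cG$ and all $i$, and $D^b(\cM(G_K))$ is generated under shifts and cones by $\cM^{\geq 0}(G_K)$ (every coherent sheaf embeds, after a large Serre twist, into an effective one with effective cokernel) while $D^b(\sC(G_K))$ is generated by $\sC^{\geq 0}(G_K)$ by Fontaine's bound on homological dimension, Theorem~\ref{theoremalmostcpdual}(1); hence the abelian equivalence extends uniquely to the asserted equivalence of triangulated categories.

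The step I expect to be the main obstacle is full faithfulness — concretely, the honest computation of the Galois cohomology of the Banach--Colmez spaces $H^0(\FFalg,\cO(\lambda))$ and of torsion $B_{\dR}^+$-representations, and the verification that these match the $\Ext$-groups of $\sC(G_K)$ in a way compatible with the dévissage of both categories (the torsion building blocks are not semisimple, and making the categorical framework precise — the abelian structures on $\cM^{\geq 0}(G_K)$ and $\sC^{\geq 0}(G_K)$, whose inclusions into $\cM(G_K)$ and $\sC(G_K)$ are not exact — is a nontrivial part of the bookkeeping). Essential surjectivity is also delicate, resting on the structure theory of $\sC(G_K)$, but it is morally a consequence of full faithfulness together with the very definition of $\sC^{\geq 0}(G_K)$ as the subobjects of torsion $B_{\dR}^+$-representations.
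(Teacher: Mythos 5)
The first thing to say is that there is a scope mismatch: the paper does not reprove Fontaine's theorem. Everything up to and including the existence of the triangulated equivalence $D^b(\cM(G_K))\simeq D^b(\sC(G_K))$ is taken as a citation of \cite[Theorem A]{fontaine2019almost}; the only thing the paper actually proves is the one clause Fontaine does not state explicitly, namely that the derived equivalence is computed by $R\Gamma(\FFalg,-)$. The argument for that is short: by Fontaine's ``left big'' embedding result (\cite[Theorem 6.4]{fontaine2019almost}), every $A\in\cM(G_K)$ sits in a short exact sequence $0\to A\to B\to C\to 0$ with $B$ effective and $C$ torsion (hence effective), so the image of $A$ under the derived equivalence is the two-term complex $[H^0(\FFalg,B)\to H^0(\FFalg,C)]$, which is $R\Gamma(\FFalg,A)$ because $H^1(\FFalg,-)$ vanishes on effective objects. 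Your proposal instead reconstructs Fontaine's entire proof; as a summary of his strategy (d\'evissage into $\cO(\lambda)$, $\lambda\geq 0$, and torsion sheaves at $\infty$, plus the Galois-cohomology computations matching the $\Ext$-groups of $\sC(G_K)$) it is broadly faithful, but it is not what is needed here, and it is sketchiest exactly at the step the paper's proof addresses.

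Concretely, the gap in your last paragraph is the passage from the equivalence of the \emph{effective} subcategories to the derived equivalence of the \emph{ambient} categories. You assert that the $\Ext$-comparison ``upgrades'' to $\Ext^i_{\cM(G_K)}(\cF,\cG)\cong\Ext^i_{\sC(G_K)}(H^0\cF,H^0\cG)$ for effective $\cF,\cG$, but since the inclusions $\cM^{\geq 0}(G_K)\subset\cM(G_K)$ and $\sC^{\geq 0}(G_K)\subset\sC(G_K)$ are not exact (as you yourself note), Yoneda $\Ext$ computed inside the effective subcategories does not a priori agree with $\Ext$ in the ambient categories, and generation under shifts and cones alone does not produce a well-defined functor on $D^b$, let alone a fully faithful one. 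Fontaine's left-big formalism is precisely the device that repairs this, by resolving arbitrary objects by two-term complexes of effective ones in a way compatible on both sides; it is not optional bookkeeping but the actual mechanism of the derived extension, and it is also what identifies the resulting functor with $R\Gamma(\FFalg,-)$. Two smaller points: your appeal to Proposition \ref{propositionperiodisomorphism} (the syntomic--pro-\'etale comparison for Stein spaces) to realise $\bC_p$-representations as sections of line bundles is a misdirected reference and should be removed; and the claim that $G_K$-equivariant torsion sheaves are exactly skyscrapers at $\infty$ requires not merely that $\infty$ is the unique $G_K$-fixed closed point but that it is the unique closed point with open stabiliser, which is a nontrivial input from Fargues--Fontaine.
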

  
  \begin{proof}
  Only the statement that the equivalence of triangulated categories is induced by $R\Gamma(\FFalg,-)$ is not stated in the cited theorem, but it is implicit in the proof: Fontaine proved in  \cite[Theorem 6.4]{fontaine2019almost} that the exact embedding $\cM^{\geq 0}(G_K) \to \cM(G_K)$ is left big (cf. \cite[Sec. 6.2 Page 35]{fontaine2019almost}) with respect to $\cM^{\infty}(G_K)$ (objects of $\cM(G_K)$ whose underlying $\cO_{\FFalg}$-module is torsion). So for each $A\in \cM(G_K)$ there exists $B\in \cM^{\geq 0}(G_K)$ and $C\in \cM^{\infty}(G_K)$ together with a short exact sequence 
  $$0\to A\to B\to C\to 0.$$
  Since $B,C\in \cM^{\geq 0}(G_K)$, the image of $A$ in $D^b(\sC(G_K))$ is $[H^0(\FFalg,B)\to H^0(\FFalg,C)]$. While $H^0(\FFalg,B)[0]\simeq R\Gamma(\FFalg,B)$ and $H^0(\FFalg,C)[0]\simeq R\Gamma(\FFalg,C)$, so the complex is equivalent to $R\Gamma(\FFalg,A)$.
  \end{proof}

Therefore, given $\cF,\cG\in D^b(\cM(G_K))$, we have the following identification:
\begin{equation}\label{formulaequivofderived}
\begin{aligned}
	&R^n\Hom_{\cM(G_K)}(\cF,\cG)\cong \Hom_{D^b(\cM(G_K)}(\cF,\cG[n])\\
	&\cong \Hom_{D^b(\sC(G_K))}(R\Gamma(\FFalg,\cF),R\Gamma(\FFalg,\cG)[n])\\
	&\cong R^n\Hom_{\sC(G_K)}(R\Gamma(\FFalg,\cF),R\Gamma(\FFalg,\cG))
\end{aligned}\end{equation} 

It provides the following isomorphisms:
\begin{equation}\label{formulaisomorphismglobalsection}
R^n\Gamma(G_K,\underline{R\Gamma(\FFalg,\cF)})\cong \underline{R^n\Hom_{\sC(G_K)}(\bQ_p,R\Gamma(\FFalg,\cF))}\cong \underline{R^n\Hom_{\cM(G_K)}(\cO,\cF)}
\end{equation}
Assume $\cF,\cG\in D^b(\cM(G_K))$ are perfect complexes, so they can be represented by flat and dualizable complexes and we get \begin{equation}\label{formulaisomorphismoncohomology}
R^n\Gamma(G_K,\underline{R\Hom_{\FF}(\cF,\cG)})\cong  R^n\Gamma(G_K,\underline{R\Gamma(\FFalg,\cF^\vee\otimes \cG)})\cong \underline{R^n\Hom_{\cM(G_K)}(\cF,\cG)}.
\end{equation}
where the second isomorphism uses \eqref{formulaisomorphismglobalsection}.

\begin{proposition} \label{propositionlocaltateHKsheaf}
 	Suppose $r\geq d$. Then $\gamma_{\HK}^r$ is a quasi-isomorphism. \end{proposition}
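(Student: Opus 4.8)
The plan is to reduce, by a limit argument, to the case where $X$ is a naive interior of a smooth dagger affinoid --- so that the Hyodo--Kato cohomology becomes finite-dimensional --- and then to deduce the statement from Fontaine's duality for almost $\bC_p$-representations (Theorem~\ref{theoremalmostcpdual}) through his equivalence $D^b(\cM(G_K))\simeq D^b(\sC(G_K))$ (Theorem~\ref{theoremfontaineequivalence}) and the comparisons \eqref{formulaisomorphismglobalsection}, \eqref{formulaisomorphismoncohomology}, \eqref{formulaequivofderived}.

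First I would carry out the limit reduction. Fix a strictly increasing open cover $\{X_n\}$ of $X$ by adapted naive interiors of smooth dagger affinoids, so that $R\Gamma_{\HK,c}(X_C)\simeq\colim_n R\Gamma_{\HK,c}(X_{n,C})$, and hence $\cE_{\HK,c}(X_C,r)\simeq\colim_n\cE_{\HK,c}(X_{n,C},r)$ in $D\nuc(B)^{\varphi,G_K}$. Now $R\Gamma(\FF,-)$ commutes with this filtered colimit ($\FF$ being quasi-compact quasi-separated), $R\sH om_{\FF}(-,\cO_{\FF}\otimes\bQ_p(1)[2])$ turns it into a cofiltered limit, $R\Gamma(G_K,-)$ commutes with it (being represented, up to quasi-isomorphism, by a complex supported in the bounded range of degrees $[0,2]$ since $\mathrm{cd}_p(G_K)=2$), and $\bD_{\bQ_p}(-)$ sends colimits to limits; so both sides of $\gamma_{\HK}^r$ become $R\lim_n$ of the analogous complexes for the $X_n$, and $\gamma_{\HK}^r=R\lim_n\gamma_{\HK,X_n}^r$. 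It therefore suffices to treat $X$ a naive interior of a smooth dagger affinoid, in which case $H^i_{\HK,c}(X_C)$ is a finite-dimensional $\check C$-vector space for all $i$.

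In that case, by Remark~\ref{remarkslope} the hypothesis $r\ge d$ makes $\cE_{\HK,c}(X_C,r)$ a perfect complex whose cohomology sheaves are $G_K$-equivariant vector bundles of non-negative slope on $\FF$; by GAGA for the Fargues--Fontaine curve it defines a perfect complex in $D^b(\cM(G_K))$, and $\cO_{\FF}\otimes\bQ_p(1)[2]$ is a shifted $G_K$-equivariant line bundle there with $R\Gamma(\FFalg,\cO\otimes\bQ_p(1))\simeq\bQ_p(1)$. Set $\cN:=R\Gamma(\FFalg,\cE_{\HK,c}(X_C,r))\in D^b(\sC(G_K))$. Using Remark~\ref{remarkequivariant} to rewrite $R\Gamma(G_K,R\Gamma(\FF,-))$ and $R\Gamma(G_K,\bD_{\FF}(-))$ as $R\Hom$ in $D\nuc(B)^{\varphi,G_K}$, and then \eqref{formulaisomorphismglobalsection}, \eqref{formulaisomorphismoncohomology}, \eqref{formulaequivofderived}, one gets natural quasi-isomorphisms
$$R\Gamma(G_K,R\Gamma_{\HK,c}^B(X_C,r))\simeq R\Hom_{\sC(G_K)}(\bQ_p,\cN),\qquad R\Gamma(G_K,\bD_{\FF}(\cE_{\HK,c}(X_C,r)))\simeq R\Hom_{\sC(G_K)}(\cN,\bQ_p(1)[2]),$$
under which $\gamma_{\HK}^r$ becomes the map adjoint to the pairing obtained by composing morphisms in $D^b(\sC(G_K))$ and applying Fontaine's trace $R\Hom_{\sC(G_K)}(\bQ_p,\bQ_p(1)[2])=R\Gamma(G_K,\bQ_p(1))[2]\to\bQ_p$. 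Theorem~\ref{theoremalmostcpdual}(1),(3), applied along the finite filtrations of $\cN$ and of $\bQ_p(1)$ by their cohomology objects, then shows this pairing is perfect, whence $\gamma_{\HK}^r$ is a quasi-isomorphism.

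The step I expect to be the main obstacle is the last compatibility: verifying that the geometrically defined $\gamma_{\HK}^r$ --- built from the evaluation map $R\sH om_{\FF}(\cE_{\HK,c}(X_C,r),\cO_{\FF}\otimes\bQ_p(1)[2])\otimes_{\FF}^L R\sH om_{\FF}(\cO_{\FF},\cE_{\HK,c}(X_C,r))\to\cO_{\FF}\otimes\bQ_p(1)[2]$, followed by $R\Gamma(\FF,-)$, $R\Gamma(G_K,-)$, and the truncation $\tau^{\ge0}$ on $R\Gamma(G_K,\bQ_p(1))[2]$ --- corresponds, under Fontaine's equivalence, to the pairing given by Yoneda composition and Fontaine's trace. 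The evaluation map corresponds to composition of morphisms by general nonsense, so the crux is that the two trace maps $R\Gamma(G_K,\bQ_p(1))[2]\to\bQ_p$ agree up to a nonzero scalar; both are the canonical isomorphism $H^2_{\cont}(G_K,\bQ_p(1))\simeq\bQ_p$ up to scaling (via Theorem~\ref{theoremalmostcpdual}(2)), and since only non-degeneracy is needed, this suffices. A secondary technical point will be to make the GAGA comparison between the analytic and algebraic Fargues--Fontaine curves $G_K$-equivariantly and to check that $\cE_{\HK,c}(X_C,r)$ really defines a perfect complex in $D^b(\cM(G_K))$.
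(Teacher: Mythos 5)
Your proposal is correct and follows essentially the same route as the paper: reduce by a colimit/limit argument to the adapted naive interiors $X_n$ where $H^i_{\HK,c}$ is finite-dimensional, truncate to the individual cohomology sheaves (vector bundles of non-negative slope by Remark~\ref{remarkslope}), pass through GAGA and Fontaine's equivalence $D^b(\cM(G_K))\simeq D^b(\sC(G_K))$, and conclude with the perfect pairing of Theorem~\ref{theoremalmostcpdual}(3). The only cosmetic difference is that the paper performs the limit step last rather than first, and it likewise dispatches the trace-compatibility you flag as the "main obstacle" by noting it is immediate under the identifications \eqref{formulaequivofderived} and \eqref{formulaisomorphismoncohomology}.
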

 \begin{proof}

 Let $\{X_n\}$ be a strictly increasing open covering of $X$ by adapted naive interiors of dagger affinoids. We first show the morphism
  $$\gamma^{r}_{\HK,n}: R\Gamma(G_K,\bD_{\FF}(\cE_{\HK,c}(X_{n,C},r)))\longrightarrow \bD_{\bQ_p}(R\Gamma(G_K,R\Gamma(\FF,\cE_{\HK,c}(X_{n,C},r))) $$
 is a quasi-isomorphism. 
 For each $i\geq 0$, $H^i_{\HK,c}(X_{n,C})$ is a finite dimensional vector space over $\check{C}$ with action of $G_K,\varphi$ and $N$. So $\cE_{\HK,c}^i(X_{n,C},r):=H^i(\cE_{\HK,c}(X_C,r)))$ is a vector bundles of non-negative slope on $FF$ with $G_K$-action (cf. Remark \ref{remarkslope}) . By canonical truncation, it is enough to show that the morphism
 \begin{equation}\label{formulapropositionfirstreduce}
 \gamma^{r}_{\HK,i,n}: R\Gamma(G_K,\bD_{\FF}(\cE^i_{\HK,c}(X_{n,C},r)))\longrightarrow \bD_{\bQ_p}(R\Gamma(G_K,R\Gamma(\FF,\cE^i_{\HK,c}(X_{n,C},r))) 	
 \end{equation}
 is a quasi-isomorphism.

Since condensed perfect complexes on the Fargues-Fontaine curve coincide with classical perfect complexes: If $\sF^{cl}$ is a classical vector bundle on $\FF$ with associated condensed vector bundle $\sF$, then $H^i(\FF,\sF^{cl})$ is a topological $\bQ_p$-vector spaces and
    $$\underline{H^i(\FF,\sF^{cl})}\cong H^i(\FF,\sF).$$  
    In what follows, we will not distinguish them.

  Using the construction in \cite[Sec. 10.3.2]{fargues2019courbes}, one can also associate to $H^i_{\HK,c}(X_{n,C})$ a $G_K$-equivariant vector bundle $\cE^{i,alg}_{\HK,c}(X_C,r)$ on $\FFalg$  whose analytification is $\cE^i_{HK,c}(X_{n,C},r)$. By GAGA theorem (\cite[Theorem II 2.7]{fargues2021geometrization}), one sees easily that the cohomology groups of their dual coincide as Banach $G_K$-representations. Therefore, we have a quasi-isomorphism
  \begin{equation}\label{formulaGAGAforsections}
  \bD_{\FF}(\cE^i_{\HK,c}(X_{n,C},r))\simeq \underline{R\Hom_{\FFalg}(\cE^{i,alg}_{\HK,c}(X_{n,C},r),\cO\otimes \bQ_p(1)[2])} 
   \end{equation}
   and will omit the superscript $alg$ in what following.\\
   
   Note that since $\cE^i_{\HK,c}(X_{n,C},r)$ is a vector bundle of non-negative slope (cf. Remark \ref{remarkslope}), we have a quasi-isomorphism $R\Gamma(\FFalg,\cE^{i}_{\HK,c}(X_{n,C},r)) \simeq H^0(\FFalg,\cE^{i}_{\HK,c}(X_{n,C},r))[0]$. Under this identification, we use $HK^i_{c,B}(X_{n,C},r)$ to denote $R\Gamma(\FFalg,\cE^{i}_{\HK,c}(X_{n,C},r))$ and view it as an object in $\sC^{\geq 0}(G_K)$. Indeed Theorem \ref{theoremalmostcpdual} (3) induces a duality of finite dimensional classical $\bQ_p$-vector spaces
  \begin{equation}\label{formulafontainetatedualsheaf}
  	R^k\Hom_{\sC(G_K)}(HK^i_{c,B}(X_{n,C},r),\bQ_p(1))\cong (R^{2-k}\Hom_{\sC(G_K)}(\bQ_p,HK^i_{c,B}(X_{n,C},r)))^\vee 
  \end{equation}

  To show \eqref{formulapropositionfirstreduce}, it suffices to pass to cohomology. For $k\in \bZ$,  we need to show the morphism
  $$ \gamma_{\HK,i,n}^{r,k}: R^k\Gamma(G_K,\bD_{\FF}(\cE^i_{\HK,c}(X_{n,C},r)))\longrightarrow \bD_{\bQ_p}(R^{-k}\Gamma(G_K,HK^i_{c,B}(X_{n,C},r))) $$
  is an isomorphism. (We use here the fact that the Galois cohomology groups are finite rank over $\bQ_p$)
  
  But this follows from the following isomorphisms
 $$\begin{aligned}
 &R^k\Gamma(G_K,R\Hom_{\FF}(\cE^i_{\HK,c}(X_{n,C},r),\cO \otimes \bQ_p(1)[2]))\\
 &\cong R^{k+2}\Gamma(G_K, \underline{R\Hom_{\FFalg}(\cE^{i,alg}_{\HK,c}(X_{n,C},r),\cO \otimes \bQ_p(1))})\\
 &\cong \un{R^{k+2}\Hom_{\cM(G_K)}(\cE^{i,alg}_{\HK,c}(X_{n,C},r),\cO \otimes \bQ_p(1))}\\
 &\cong  \underline{R^{k+2}\Hom_{\sC(G_K)}(HK^i_{c,B}(X_{n,C},r),\bQ_p(1))}\\
 &\cong \bD_{\bQ_p}(\underline{R^{-k}\Hom_{\sC(G_K)}(\bQ_p,HK^i_{c,B}(X_{n,C},r))})\\
 &\cong  \bD_{\bQ_p}(R^{-k}\Gamma(G_K,HK^i_{c,B}(X_{n,C},r))).
 \end{aligned}$$

The first isomorphism is \eqref{formulaGAGAforsections}.
The second one is \eqref{formulaisomorphismoncohomology}. The third one is \eqref{formulaequivofderived}. The forth one is \eqref{formulafontainetatedualsheaf} and the last one is \eqref{formulaisomorphismglobalsection}. The compatibility with the pairing we constructed is clear as they are defined via the same cup product under \eqref{formulaisomorphismoncohomology}.\par 
 Now we pass to limit: 
 Since $\cE_{\HK,c}(X_C,r)=\text{colim}_n \cE_{\HK,c}(X_{n,C},r)$, we have
 $$\begin{aligned}
 &R\Gamma(G_K,\bD_{\FF}(\cE_{\HK,c}(X_C,r))\simeq R\Gamma(G_K,\bD_{\FF}(\colim_n \cE_{\HK,{c}}(X_{n,C},r)))\\
 &\simeq  R\lim_n R\Gamma(G_K,\bD_{\FF}(\cE_{\HK,{c}}(X_{n,C},r)))\simeq R\lim_n \bD_{\bQ_p}(R\Gamma(G_K,R\Gamma(\FF,\cE_{\HK,c}(X_{n,C},r)))\\ 
 & \simeq \bD_{\bQ_p}(R\Gamma(G_K,R\Gamma(\FF,\cE_{\HK,c}(X_C,r))).
 \end{aligned}$$
\end{proof}

\begin{remark}
	It is clear from the proof that if we twist $\cE_{\HK,c}(X_C,r)$ by a Tate twist, the duality still holds. That is, for any $a\in \bZ$, we have a quasi-isomorphism
	$$\gamma^r_{\HK}: R\Gamma(G_K,\bD_{\FF}(\cE_{\HK,c}(X_C,r)\otimes \bQ_p(a)))\longrightarrow  \bD_{\bQ_p}(R\Gamma(G_K,R\Gamma_{\HK,c}^B(X_C,r)\otimes \bQ_p(a)))$$
\end{remark}

\subsubsection{Duality for the de Rham sheaf}

\begin{proposition} \label{propositionDRlocaltatesheaf}
   For any $a\in \bZ$ and $r\geq d$,  the morphism $\gamma^r_{\dR}$ is a quasi-isomorphism. \end{proposition}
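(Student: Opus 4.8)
The plan is: simplify both sides of $\gamma^r_{\dR}$; reduce — by passing to naive interiors, where the de Rham cohomology becomes finite over $K$ with trivial Galois action — to a single building block supported at $y_\infty$; and settle that block by Tate's computation of $H^\ast(G_K,C(j))$ together with local duality along the closed point.

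\textbf{Step 1: the two sides.} The de Rham module $R\Gamma^{B,\cE}_{\dR,c}(X_C,r)$ has zero Frobenius structure map, and $R\Gamma_{\dR,c,B_{[u,v]}}(X,r)$ is $t$-power torsion, hence killed by $-\otimes_{B_{[u,v]}}B_{[u,u]}$ since $t$ is a unit in $B_{[u,v/p]}\supseteq B_{[u,u]}$; so Lemma \ref{lemmaglobalsection} and $B_{[u,v]}/t^a\cong B^+_{\dR}/t^a$ give $R\Gamma^B_{\dR,c}(X_C,r)=R\Gamma(\FF,\cE_{\dR,c}(X_C,r))\simeq DR_c(X_C,r)$, so that the target of $\gamma^r_{\dR}$ becomes $\bD_{\bQ_p}(R\Gamma(G_K,DR_c(X_C,r)))$. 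For the source, $\cE_{\dR,c}(X_C,r)\simeq i_{\infty,*}DR_c(X_C,r)$ is supported on the divisor $y_\infty$, so $\bD_{\FF}(\cE_{\dR,c}(X_C,r))=R\Gamma\bigl(\FF,R\sH om_{\FF}(\cE_{\dR,c}(X_C,r),\cO_{\FF}\otimes\bQ_p(1)[2])\bigr)$ is to be computed by Grothendieck (local) duality along the closed immersion $i_\infty$. Everything below goes through verbatim after an extra Tate twist by $\bQ_p(a)$, which takes care of the ``$a\in\bZ$'' in the statement.

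\textbf{Step 2: reduction to a building block.} Replace $X$ by a strictly increasing open cover $\{X_n\}$ by adapted naive interiors of dagger affinoids. Then $R\Gamma_{\dR,c}(X_n)$ is a perfect complex of $K$-vector spaces with \emph{trivial} $G_K$-action (\cite{finitederham}), so that each cohomology object $H^j(DR_c(X_{n,C},r))$ is a finite successive extension of modules $W\otimes^\square_K B^+_{\dR}/t^m$ with $W$ finite-dimensional over $K$ and $G_K$-trivial; hence, applying $\cE_{\FF}$ and running through the canonical truncation, $\cE_{\dR,c}(X_{n,C},r)$ lies in the triangulated subcategory of $D\nuc(B)^{\varphi,G_K}$ generated by the objects $i_{\infty,*}(B^+_{\dR}/t^m(a))$ (with zero Frobenius), $m\ge 1$, $a\in\bZ$. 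Both $R\Gamma(G_K,\bD_{\FF}(-))$ and $\bD_{\bQ_p}(R\Gamma(G_K,R\Gamma(\FF,-)))$ are triangulated functors and $\gamma$ is a natural transformation between them, so it suffices to show that $\gamma$ is a quasi-isomorphism for $\cM=i_{\infty,*}(B^+_{\dR}/t^m(a))$; for then $\gamma$ is a quasi-isomorphism on each $\cE_{\dR,c}(X_{n,C},r)$, and since $\cE_{\dR,c}(X_C,r)=\colim_n\cE_{\dR,c}(X_{n,C},r)$ and $R\Gamma(\FF,-)$, $R\sH om_{\FF}$ and $R\Gamma(G_K,-)$ commute with the resulting $R\lim_n$ — a limit of finite-dimensional $\bQ_p$-complexes, hence with vanishing $R^1\lim_n$ — also on $X$, exactly as at the end of the proof of Proposition \ref{propositionlocaltateHKsheaf}.

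\textbf{Step 3: the building block.} For $\cM=i_{\infty,*}(B^+_{\dR}/t^m(a))$ one has $R\Gamma(\FF,\cM)=B^+_{\dR}/t^m(a)$, while local duality for the complete discrete valuation ring $B^+_{\dR}$, namely $R\Hom_{B^+_{\dR}}(B^+_{\dR}/t^m,B^+_{\dR})\simeq (B^+_{\dR}/t^m)(-m)[-1]$, together with the Tate twist contributed by $i_\infty^{!}(\cO_{\FF}\otimes\bQ_p(1))$, identifies $\bD_{\FF}(\cM)\simeq(B^+_{\dR}/t^m)(a')[1]$ for an explicit integer $a'=a'(m,a)$. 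It remains to compute Galois cohomology: via the $t$-adic filtration on $B^+_{\dR}/t^m$, whose graded pieces are $C(b),C(b+1),\dots,C(b+m-1)$, and Tate's theorem (so that $H^i(G_K,C(j))$ equals $K$ for $i\in\{0,1\}$, $j=0$, and vanishes otherwise), one gets that $R\Gamma(G_K,B^+_{\dR}/t^m(b))$ is acyclic unless $1-m\le b\le 0$, in which case the connecting maps vanish and $R\Gamma(G_K,B^+_{\dR}/t^m(b))\simeq R\Gamma(G_K,C)\simeq K\oplus K[-1]$. The map $b\mapsto a'$ is the reflection of the segment $[1-m,0]$ onto itself, so the source and target of $\gamma$ for $\cM$ have cohomology in matching bidegrees; comparing the $K$-linear and $\bQ_p$-linear duals through $\mathrm{Tr}_{K/\bQ_p}$ and observing that, on the single contributing $C(0)$-subquotient, the pairing underlying $\gamma$ restricts to Tate's perfect local-duality pairing $H^0(G_K,C)\times H^1(G_K,C)\to H^2(G_K,\bQ_p(1))=\bQ_p$, one concludes that $\gamma$ is a quasi-isomorphism for $\cM$, which finishes the argument.

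\textbf{Expected main obstacle.} The delicate part is the bookkeeping of Tate twists and homological shifts in the local duality along $y_\infty$ — in particular pinning down $a'=a'(m,a)$ so that it is exactly the reflection of $[1-m,0]$ — and, more substantively, checking that the abstract cup-product/trace pairing of \eqref{formulapairing} restricts to the \emph{perfect} Tate pairing on the one relevant $C(0)$-subquotient, so that $\gamma$ is genuinely an isomorphism and not merely a map between spaces of equal dimension. Secondary care is required for the dévissage in Step 2 (in particular for the structure of the Hodge-filtered compactly supported de Rham cohomology of naive interiors) and for the commutation of $\bD_{\FF}$ and $\bD_{\bQ_p}\circ R\Gamma(G_K,-)$ with the limit over the $X_n$.
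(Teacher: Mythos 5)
The central gap is in Step~2. Passing to naive interiors makes $H^i_{\dR,c}(X_n)$ finite-dimensional over $K$, but it does \emph{not} make the cohomology of the Hodge-truncated complex $DR_c(X_{n,C},r)$ a finite successive extension of objects $W\otimes_K^\square B^+_{\dR}/t^m$ with $W$ finite-dimensional. Indeed, $H^i(DR_c(X,r))$ sits in an exact sequence
$$0\to (H^d_c(X,\Omega^{i-d})/\im(d_c))\otimes^\square_K C(s-i)\to H^i(DR_c(X,r))\to H^i_{\dR,c}(X)\otimes_K^{\square} (B^+_{\dR}/t^{s-i})\to 0,$$
and the subobject involves the compactly supported coherent cohomology $H^d_c(X_n,\Omega^{i-d})$, which is an \emph{infinite-dimensional} space of compact type (Serre-dual to a Fr\'echet space of differential forms) even when $X_n$ is a naive interior; only the de Rham cohomology itself becomes finite-dimensional, not the Hodge-graded pieces. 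Consequently $\cE_{\dR,c}(X_{n,C},r)$ does not lie in the triangulated subcategory generated by the blocks $i_{\infty,*}(B^+_{\dR}/t^m(a))$ with finite-dimensional coefficients, and the d\'evissage collapses precisely at the point you relegate to ``secondary care.'' Handling the piece $V\otimes_K^\square C(j)$ with $V$ infinite-dimensional of compact type is where the real content lies: one needs $R\Hom_{B'}(V\otimes_K^{L_\square}B'/t^k,B')\simeq V^*\otimes_K^\square(t^{-k}B'/B')[-1]$, which rests on Lemma \ref{lemmacompacttypedual} (no higher Ext when dualizing a space of compact type) and on exchanging $R\underline{\Hom}$ with the solid tensor by a Fr\'echet space — this is the computation \eqref{computation1}, and it has no finite-dimensional surrogate. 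Relatedly, the closing claim of Step~2 that the resulting $R\lim_n$ is a limit of finite-dimensional complexes (hence has vanishing $R^1\lim$) is false for the same reason; the actual argument avoids any limit over $n$ in the de Rham case and works directly with the Stein space $X$ and its compact-type cohomology.

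Your Steps~1 and~3 are sound in spirit: the identification of the source and target via the support of $\cE_{\dR,c}$ at $y_\infty$, Tate's computation of $H^\ast(G_K,C(j))$, and the reduction of the pairing to a perfect duality on the one contributing $C(0)$-graded piece all match the mechanism actually used (where the perfectness is expressed as Fontaine's duality $\Ext^l_{\sC(G_K)}(\bC_p,\bQ_p(1))\cong\Ext^{2-l}_{\sC(G_K)}(\bQ_p,\bC_p)^\vee$, applied as $\mathrm{id}_{V^*}\otimes\chi_k$). But that mechanism must be applied with coefficients in an arbitrary compact-type space $V$, not a finite-dimensional one, so the reduction feeding into your Step~3 needs to be replaced by the two-term exact sequence above together with the functional-analytic dualization of $V$.
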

 
 \begin{proof}
 	 	
 	Let $\cE^i_{\dR,c}(X_C,r):=H^i(\cE_{\dR,c}(X_C,r))$.
 	By canonical truncation on $\cE_{\dR,c}(X_C,r)$ and induction, it suffices to show the morphism 
 	$$\gamma^{r}_{\dR,i}: R\Gamma(G_K,\bD_{\FF}(\cE^i_{\dR,{c}}(X_C,r)\otimes \bQ_p(a)))\longrightarrow \bD_{\bQ_p}(R\Gamma(G_K,R^i\Gamma_{\dR,c}^B(X_C,r)\otimes \bQ_p(a)))$$
 	is a quasi-isomorphism. Here we used the fact that $R^i\Gamma(\FF,\cE_{\dR,c}(X_C,r))\cong R\Gamma(\FF,\cE_{\dR,c}^i(X_C,r))$.  
 	
 	To simplify the notation, we let $B'=(B_{[u,v]},\bZ)_\square$ and see the sheaves just as $B'$-modules. This is harmless because the gluing data on $\cE_{\dR,c}(X_C,r)$ is trivial and the $G_K$-action only comes from the ring $B'$.\par 
 	 Suppose $V$ is a $K$-vector space of compact type and $k\geq 0$, we have
 	\begin{equation}\label{computation1}\begin{aligned}
 	&R\underline{\Hom}_{B'}(V\otimes_K^{L_\square} B'/t^k, B')
 	\simeq R\underline{\Hom}_{K}(V, R\underline{\Hom}_{B'}(B'/t^k,B'))\\
 	&\simeq R\underline{\Hom}_{K}(V, (t^{-k}B'/B')[-1])
 	\simeq R\underline{\Hom}_{K}(V,K)\otimes^{L_\square}_K (t^{-k}B'/B')[-1]\\
 	&\simeq V^*\otimes^{\square}_K (t^{-k}B'/B')[-1],\\
 	 \end{aligned}\end{equation}
 
    where the third quasi-isomorphism follows from \cite[Theorem 3.40 (2)]{rodrigues2022solid} since $B'/t^kB'$ is Fr\'echet and the last one holds by Lemma \ref{lemmacompacttypedual}. \par 
    
    Set $s=d+r-1$. We have $\cE^i_{\dR,c}(X_C,r)=0$ except $d\leq i\leq s$. In this case, we have an exact sequence of $B'$-modules
    $$0\to (H^d_c(X,\Omega^{i-d})/\im(d))\otimes^\square_K C(s-i)\to \cE^i_{\dR,c}(X_C,r)\to H^i_{\dR,c}(X)\otimes_K^{\square} (B'/t^{s-i}B')\to 0$$
    where we recall $H^d_c(X,\Omega^{i-d})$ and $H^i_{dR,c}(X)$ are $K$-vector spaces of compact type. It induces a fiber sequence
    $$ H^i_{\dR,c}(X)^*\otimes_K^\square (t^{i-s}B'/B')[1]\to \bD_{\FF}(\cE_{\dR,c}^i(X_C,r))\otimes \bQ_p(-1)\to (H^d_c(X,\Omega^{i-d})/\im(d))^*\otimes_K^\square C(i-s-1)[1]$$
    
  Set $N=1-a-s$, we apply generalized Tate's formula (\cite[Proposition 3.15]{curves}) to get
\begin{equation}\label{formulacheck1} \begin{aligned}
    R^k\Gamma(G_K,\bD_{\FF}(\cE^i_{\dR,c}&(X_C,r)\otimes \bQ_p(a)))\\
    &=
\begin{cases}
	(H^d_c(X,\Omega^{i-d})/\im(d))^*,\ &\text{if }  k=-1,0  \text{ and } i=1-N\\
	H^i_{\dR,c}(X)^*,\ &\text{if }  k=-1,0  \text{ and } i+N\leq 0< 1-a\\
	0, & \mathrm{others}. 
\end{cases}
\end{aligned} \end{equation}  

and 

\begin{equation}\label{formulacheck2}\begin{aligned}
H^k(\bD_{\bQ_p}(R\Gamma(G_K,& R\Gamma_{\dR,c}^B(X_C,r)\otimes \bQ_p(a))))\\ &=
\begin{cases}
	(H^d_c(X,\Omega^{i-d})/\im(d))^* , &\text{ if } k=-1,0 \text{ and } i=1-N\\
	H^i_{\dR,c}(X)^* , &\text{ if } k=-1,0 \text{ and } a\leq 0< 1-N-i\\
	0, &\ \mathrm{others}.
\end{cases} 
\end{aligned}
\end{equation}

Now, it suffices to show the morphism in solid $\bQ_p$-vector spaces
\begin{equation}\label{formulachecktatedualsheaf}
 	\gamma_{\dR,i}^{r,k}:  R^k\Gamma(G_K,\bD_{\FF}(\cE^i_{\dR,{c}}(X_C,r)\otimes \bQ_p(a)))\longrightarrow H^k(\bD_{\bQ_p}(R\Gamma(G_K,R^i\Gamma_{\dR,c}^B(X_C,r)\otimes \bQ_p(a))))
 	 \end{equation}
 induces an isomorphism for all $k\in \bZ$. We assume $N,a,k$ satisfy the first two condition in \eqref{formulacheck1} (or equivalently \eqref{formulacheck2}) since otherwise the groups are trivial.
We can take $\cF=i_{\infty*}C$ and $\cG=\cO_{\FF}\otimes \bQ_p(1)$ in \eqref{formulaisomorphismoncohomology} and we get
\begin{equation}\label{formulasome}
R^k\Gamma(G_K,R\Hom_{\FF}(i_{\infty*}C,\cO\otimes\bQ_p(1))\cong R^k\Hom_{\sC(G_K)}(\bC_p,\bQ_p(1)) \end{equation}
 Note that $\cF$ is perfect because we have the exact sequence of equivariant coherent sheaves
$$ 0\to \cO_{\FF}\otimes \bQ_p(1) \to \cO_{\FF}(1)\to i_{\infty*}C\to 0.$$ \par 
  Meanwhile, for $l\in \bZ$, we have duality of extension groups  
  $$\chi_l : \Ext^l_{\sC(G_K)}(\bC_p,\bQ_p(1))\cong   \Ext^{2-l}_{\sC(G_K)}(\bQ_p,\bC_p)^\vee.$$ 
   Since $gr^i(B_{\dR})\cong \bC_p(i)$ as $G_K$-representations and $R\Gamma(G_K,\bC_p(i))=0$ when $i\neq 0$, one can identify \eqref{formulachecktatedualsheaf}, after shifting degree, with the map
     $$ \psi_k: R^k\Gamma(G_K,V^*\otimes_K^\square \bC_p[-1])  \longrightarrow H^k(\bD_{\bQ_p}( R\Gamma(G_K,V\otimes_K^\square \bC_p[2]))),	
      $$
      where $V=(H^d_c(X,\Omega^{i-d})/\im(d))$ or $H^i_{\dR,c}(X)$.
      By \eqref{formulasome}, the computation \eqref{computation1} and the construction of the pairing, the morphism $\psi_k$ can be identifies with $id_{V^*}\otimes^{\square} \chi_{k}$, which is an isomorphism. Note that we used the fact that both $V$ and $V^*$ are solid nuclear spaces with trivial Galois action.
      
\end{proof}

\subsubsection{Duality for syntomic sheaf}

\begin{proof}[Proof of Theorem \ref{theoremlocaltatesheaf1}] 
 We have a commutative diagram of distinguished triangles
 \begin{equation}\label{diagramcommutes}
\xymatrix{
	R\Gamma(G_K,\bD_{\FF}(\cE_{\syn,c}(X_C,\bQ_p(r))))\ar^{\gamma_{\syn}^r} [r] & \bD_{\bQ_p}(R\Gamma(G_K,R\Gamma_{\syn,c}(X_C,\bQ_p(r)))) \\
	R\Gamma(G_K,\bD_{\FF}(\cE_{\HK,c}(X_C,r)))\ar[u] \ar^{\gamma^r_{\HK}} [r] & \bD_{\bQ_p}(R\Gamma(G_K,R\Gamma^B_{\HK,c}(X_C,r)))\ar[u] \\
	R\Gamma(G_K,\bD_{\FF}(\cE_{\dR,c}(X_C,r)))\ar[u] \ar^{\gamma^r_{\dR}}[r]& \bD_{\bQ_p}(R\Gamma(G_K,R\Gamma^B_{\dR,c}(X_C,r)))\ar[u]  
}\end{equation}

By Proposition \ref{propositionlocaltateHKsheaf}, \ref{propositionDRlocaltatesheaf}, both $\gamma_{\HK}^r$ and $\gamma_{\dR}^r$ are quasi-isomorphisms, so is $\gamma^r_{\syn}$.

	\end{proof}

\subsection{Duality for pro-\'etale cohomology}

Now we are ready to deduce the main result of the paper.

 \begin{theorem}
	Let $X$ be a smooth Stein variety over $K$, geometrically irreducible of dimension $d$. Then:
	\begin{enumerate}
		\item The cohomology groups $H^i_{\proet}(X,\bQ_p(j))$ and $H^i_{\proet,c}(X,\bQ_p(j))$ are nuclear Fr\'echet and of compact type, respectively.
		\item The duality morphism 
		\begin{equation} \gamma_{\proet}: \label{conjectureformula1}
		R\Gamma_{\proet}(X,\bQ_p(j))\longrightarrow \bD_{\bQ_p}(R\Gamma_{\proet,c}(X,\bQ_p(d+1-j))[2d+2])\end{equation}
		is a quasi-isomorphism in $D(\bQ_{p,\square})$, where $\bD_{\bQ_p}(-):=R\Hom_{\bQ_p}(-,\bQ_p)$ is the (derived) dual in $D(\bQ_{p,\square})$. Moreover, it induces isomorphisms of solid $\bQ_p$-vector spaces
		\begin{equation} \label{conjectureformula2}
		H^i_{\proet}(X,\bQ_p(j))\simeq H^{2d+2-i}_{\proet,c}(X,\bQ_p(d+1-j))^*,
		\end{equation}
		\begin{equation} \label{conjectureformula3}
		H^i_{\proet,c}(X,\bQ_p(j))\simeq H^{2d+2-i}_{\proet}(X,\bQ_p(d+1-j))^*.
		\end{equation}
		
	\end{enumerate}
\end{theorem}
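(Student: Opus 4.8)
Part (1) is not new: the nuclear Fr\'echet property of $H^i_{\proet}(X,\bQ_p(j))$ is Theorem~\ref{theorem1}(2), and the statement that $H^i_{\proet,c}(X,\bQ_p(j))$ is of compact type is Theorem~\ref{theorem2}. For part (2), observe that \eqref{conjectureformula2} follows from the quasi-isomorphism \eqref{conjectureformula1} by passing to cohomology: since every $H^k_{\proet,c}(X,\bQ_p(d+1-j))$ is of compact type, Lemma~\ref{lemmacompacttypedual} gives $R\Hom_{\bQ_p}(H^k_{\proet,c},\bQ_p)=(H^k_{\proet,c})^{*}$ with no higher terms, so the hypercohomology spectral sequence computing the cohomology of $\bD_{\bQ_p}(R\Gamma_{\proet,c}(X,\bQ_p(d+1-j))[2d+2])$ degenerates and yields $H^i_{\proet}(X,\bQ_p(j))\simeq H^{2d+2-i}_{\proet,c}(X,\bQ_p(d+1-j))^{*}$; then \eqref{conjectureformula3} follows by taking strong duals in \eqref{conjectureformula2} with $(i,j)$ replaced by $(2d+2-i,\,d+1-j)$ and using that spaces of compact type are reflexive. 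So the whole theorem reduces to constructing \eqref{conjectureformula1} and checking it is the quasi-isomorphism induced by the pairing \eqref{pairingforarithmetic}.

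To construct \eqref{conjectureformula1} I would perform a Galois descent of the geometric Poincar\'e duality (Theorem~\ref{theoremgeometric}) through the local Tate duality for the syntomic sheaf (Theorem~\ref{theoremlocaltatesheaf1}), the syntomic period isomorphism (Proposition~\ref{propositionperiodisomorphism}), and the Hochschild--Serre identification $R\Gamma_{\proet,*}(X,\bQ_p(m))\simeq R\Gamma(G_K,R\Gamma_{\proet,*}(X_C,\bQ_p(m)))$. Fix $r\ge 2d$ large enough that Proposition~\ref{propositionperiodisomorphism} identifies $R\Gamma^B_{\syn,*}(X_C,\bQ_p(r))$ with $R\Gamma_{\proet,*}(X_C,\bQ_p(r))$ for $*\in\{\emptyset,c\}$, and such that it also identifies $R\Gamma^B_{\syn}(X_C,\bQ_p(2d))$ with $R\Gamma_{\proet}(X_C,\bQ_p(2d))$ (legitimate, because the pro-\'etale and syntomic complexes of a Stein space are cohomologically bounded with a bound independent of the twist). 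Fix now an arbitrary $j\in\bZ$ and put $a:=d+1-j-r\in\bZ$.

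Applying Theorem~\ref{theoremgeometric} with $(2d,r)$ in the role of $(r,r')$, so $s=d+r$, and rearranging the shift and the invertible Tate twist, one gets a $G_K$-equivariant quasi-isomorphism of sheaves on $\FF$
$$R\sH om_{\FF}\bigl(\cE_{\syn,c}(X_C,\bQ_p(r))\otimes\bQ_p(a),\ \cO_{\FF}\otimes\bQ_p(1)[2]\bigr)\ \simeq\ \cE_{\syn}(X_C,\bQ_p(2d))[2d+2]\otimes\bQ_p(1-d-r-a).$$
Apply $R\Gamma(G_K,R\Gamma(\FF,-))$ to both sides. The left side becomes $R\Gamma(G_K,\bD_{\FF}(\cE_{\syn,c}(X_C,\bQ_p(r))\otimes\bQ_p(a)))$; by the Tate-twisted form of Theorem~\ref{theoremlocaltatesheaf1} (valid for an arbitrary Tate twist, as one sees from the Tate-twisted forms of Propositions~\ref{propositionlocaltateHKsheaf} and \ref{propositionDRlocaltatesheaf} and the commutative diagram in its proof) it is quasi-isomorphic to $\bD_{\bQ_p}(R\Gamma(G_K,R\Gamma^B_{\syn,c}(X_C,\bQ_p(r))\otimes\bQ_p(a)))$, and the period isomorphism at the twist $r$ together with the equality $R\Gamma_{\proet,c}(X_C,\bQ_p(r))\otimes\bQ_p(a)=R\Gamma_{\proet,c}(X_C,\bQ_p(r+a))$ (Tate twists are genuinely invertible on pro-\'etale cohomology) and Hochschild--Serre turn it into $\bD_{\bQ_p}(R\Gamma_{\proet,c}(X,\bQ_p(r+a)))=\bD_{\bQ_p}(R\Gamma_{\proet,c}(X,\bQ_p(d+1-j)))$. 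The right side becomes $R\Gamma(G_K,R\Gamma^B_{\syn}(X_C,\bQ_p(2d))\otimes\bQ_p(1-d-r-a))[2d+2]$, which by the period isomorphism at the twist $2d$, the same manipulation with Tate twists, and Hochschild--Serre equals $R\Gamma(G_K,R\Gamma_{\proet}(X_C,\bQ_p(d+1-r-a)))[2d+2]=R\Gamma_{\proet}(X,\bQ_p(j))[2d+2]$. Comparing the two gives \eqref{conjectureformula1}; that the resulting quasi-isomorphism is the one induced by \eqref{pairingforarithmetic} is a routine compatibility check following the trace maps through all these identifications.

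The main obstacle is exactly the twist bookkeeping of the previous paragraph. A direct descent of Theorem~\ref{theoremgeometric} through Theorem~\ref{theoremlocaltatesheaf1} is constrained by the hypothesis $r,r'\ge 2d$ and by the fact that Proposition~\ref{propositionperiodisomorphism} compares only the truncations $\tau^{\le r}$, and so would prove \eqref{conjectureformula1} only for $j\le 1-d$. The point that reaches every $j$ is that the local Tate duality persists after an arbitrary Tate twist: inserting the free twist $\bQ_p(a)$ with $a=d+1-j-r$ while keeping $r$ fixed and large slides the effective twist to any desired value, using only the (safe) untwisted period comparisons at the twists $r$ and $2d$, both $\ge 2d$, and the invertibility of $\bQ_p(a)$ on pro-\'etale cohomology.
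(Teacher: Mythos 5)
Your proposal is correct and follows essentially the same route as the paper: part (1) by citing Theorems \ref{theorem1} and \ref{theorem2}, and part (2) by descending Theorem \ref{theoremgeometric} through the Tate-twisted form of Theorem \ref{theoremlocaltatesheaf1}, the period isomorphism at twists $\ge 2d$ (untruncated because the complexes live in degrees $[0,2d]$), Hochschild--Serre, and an auxiliary free twist $\bQ_p(a)$ to reach arbitrary $j$ --- the paper does exactly this, merely keeping both $r,r'\ge 2d$ general and placing the twist $a$ on the non-compact side instead of specializing $r=2d$. The only place you are more cursory than the paper is the final identification of the constructed quasi-isomorphism with the pairing \eqref{pairingforarithmetic}, which the paper carries out via a comparison of the geometric and Fargues--Fontaine trace maps (Lemma \ref{lemmacompatible}); you correctly flag that this check is needed.
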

 
\begin{proof}

The first part of the theorem is a combination of Theorem \ref{theorem1} and Theorem \ref{theorem2}.  

For the second part, we choose $r,r'\geq 2d$ and $a\in \bZ$ such that $r+a=j$. In this case, since the (compactly supported) syntomic complex is concentrated in degrees $[0,2d]$, Proposition \ref{propositionperiodisomorphism} induces  quasi-isomorphisms 
	\begin{equation}\label{formulasyntoetale}
	R\Gamma_{\syn}^B(X_C,\bQ_p(r))\simeq R\Gamma_{\proet}(X_C,\bQ_p(r)),\end{equation}
	 \begin{equation}\label{formulacomsuppsyntoet}
	 	R\Gamma_{\syn,c}^B(X_C,\bQ_p(r'))\simeq R\Gamma_{\proet,c}(X_C,\bQ_p(r')).\end{equation}
 Now, formula \eqref{conjectureformula1} follows from the computations (recall $s=r+r'-d$)
 \begin{equation}\label{lastcomputation}
 \begin{aligned}
	R\Gamma_{\proet}(X,\bQ_p(j))&\simeq R\Gamma(G_K,R\Gamma_{\syn}^B(X_C,\bQ_p(r))\otimes\bQ_p(a))   \\
	&\simeq R\Gamma(G_K,R\Gamma(\FF,\cE_{\syn}(X_C,r)\otimes\bQ_p(a)))\\
	&\simeq R\Gamma(G_K,R\Hom_{\FF}(\cE_{\syn,c}(X_C,r')[2d],\cO\otimes \bQ_p(s))\otimes\bQ_p(a))\\
	&\simeq \bD_{\bQ_p}(R\Gamma(G_K,R\Gamma_{\syn,c}(X_C,\bQ_p(r'))\otimes\bQ_p(-a-s+1))[2d+2])\\
	&\simeq \bD_{\bQ_p}(R\Gamma(G_K,R\Gamma_{\proet,c}(X_C,\bQ_p(d+1-j))[2d+2]))\\
	&\simeq \bD_{\bQ_p}(R\Gamma_{\proet,c}(X,\bQ_p(d+1-j))[2d+2]),
\end{aligned}\end{equation}
where the first and the fifth quasi-isomorphism uses \eqref{formulasyntoetale} and \eqref{formulacomsuppsyntoet}, respectively; the third one is Theorem \ref{theoremgeometric} and the forth one is Theorem \ref{theoremlocaltatesheaf1}.\par 

    Formula \eqref{conjectureformula2} follows from \eqref{conjectureformula1} by taking cohomology group: We have already shown $H^i_{\proet,c}(X,\bQ_p(j))$ are spaces of compact type so there is no higher extension group when taking duals (Lemma \ref{lemmacompacttypedual}). Formula \eqref{conjectureformula3} follows from formula \eqref{conjectureformula2} (change $i$ to $2d+2-i$ and $j$ to $d+1-j$) and the fact that nuclear Fr\'echet spaces and spaces of compact type are reflexive. \par 
    
    To check the quasi-isomorphism \eqref{lastcomputation} coincides with $\gamma_{\proet}$ (up to a non-zero constant) we recall the following: Let $r\geq 2d$ be an integer. The geometric trace map 
    $$tr(2d): H^{2d}_{\proet,c}(X_C,\bQ_p(2d))\to \bQ_p(d)$$
    is defined in \cite[Sec. 7.3.1]{achinger2025compactlysupportedpadicproetale}. By Tate twist, we get $tr(r): H^{2d}_{\proet,c}(X_C,\bQ_p(r))\to \bQ_p(r-d)$. On the other hand, there is a trace map for compactly supported syntomic sheaf
$$\tr^{B}_{\syn}(r): \cE_{\syn,c}(X_C,\bQ_p(r))\longrightarrow \cO\otimes \bQ_p(r-d)[-2d]$$
defined in \cite[Sec. 5.2]{geometricdual}. Taking global section on the Fargues-Fontaine curve and the $2d$-th cohomology, $tr^{B}_{\syn}(r)$ induces a map
$$tr_{\syn}^{FF}(r): H^{2d}_{\syn,c}(X_C,\bQ_p(r))\to \bQ_p(r-d).$$
Apply the period isomorphism, we get a trace map 
$$tr^{FF}(r): H^{2d}_{\proet,c}(X_C,\bQ_p(r))\to \bQ_p(r-d)$$
By construction, the trace map $tr(2d)$ coincides with $tr^{FF}(2d)$.

 Now we simplify the notation: put $G(-):=R\Gamma(G_K,-)$, $F(-):=R\Gamma(\FF,-)$, $\cE_*(r):=\cE_{\syn,*}(X_C,\bQ_p(r))$, where $*\in \{\emptyset,c\}$. Recall that the arithmetic trace map \eqref{formulatrace} is the composition of the geometric trace map $tr(d+1)$ and the Galois trace map. Lemma \ref{lemmacompatible} implis that for any $r\geq 2d$ and $a\in \bZ$, the trace map 
$$tr^{FF}(r)\otimes\bQ_p(a): H^{2d}_{\proet,c}(X_C,\bQ_p(r))\otimes \bQ_p(a)\to \bQ_p(r-d+a)$$ 
coincides with $tr(r+a)$.
    
    Choose $r,r'\geq 2d$, the pairing \eqref{pairingforarithmetic} now can be written as     
    \begin{equation*}\label{check}
    \begin{aligned}
    &G(F(\cE(r))\otimes \bQ_p(j-r))\otimes G(F(\cE_c(r'))\otimes \bQ_p(d+1-j-r'))\longrightarrow\\
    &G(F(\cE_c(r+r'))\otimes\bQ_p(d+1-r-r'))\longrightarrow G(F(\cO\otimes\bQ_p(1)[-2d]))\longrightarrow\\
    &G(\bQ_p(1))[-2d]\longrightarrow \bQ_p[-2d-2],
    \end{aligned} \end{equation*}
    where the first map is induced by cup product of syntomic sheaves, the second one is induced by $\tr^{FF}_{\syn}(r+r')$. Comparing it with the construction of $\gamma_{\syn}^r$, it remains to observe that, after the indentification of $\cE(r)$ with $R\sH om(\cE_c(r'),\cO\otimes \bQ_p(s))$ which we used in the third quasi-isomorphism in \eqref{lastcomputation}, the pairing $\cE(r)\otimes \cE_c(r')\to \cO\otimes \bQ_p(s)$ is identified with the evaluation map $\cE_c(r')\otimes R\sH om(\cE_c(r'),\cO\otimes \bQ_p(s))\to \cO\otimes \bQ_p(s)$.
	
\end{proof}

\begin{lemma}\label{lemmacompatible}
        Let $r\geq 2d$, the following diagram commutes (up to a nonzero constant)
        $$\begin{CD}
           H^{2d}_{\proet,c}(X_C,\bQ_p(r))@>tr(r)>> \bQ_p(r-d)\\
            @V=VV @VV=V\\
            H^{2d}_{\proet,c}(X_C,\bQ_p(r))@>tr^{FF}(r)>> \bQ_p(r-d).
        \end{CD}$$
    \end{lemma}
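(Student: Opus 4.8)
The vertical arrows in the diagram are the identity, so what must be shown is that $\tr^{FF}(r)$ and $\tr(r)$ agree up to a nonzero scalar. Both are $G_K$-equivariant maps $H^{2d}_{\proet,c}(X_C,\bQ_p(r))\to\bQ_p(r-d)$: for $\tr(r)$ this is by the definition of the geometric trace as the $(r-2d)$-th Tate twist of $\tr(2d)$, while $\tr^{FF}(r)$ is obtained from the morphism $\tr^B_{\syn}(r)$ in $D\nuc(B)^{\varphi,G_K}$ by applying $R\Gamma(\FF,-)$, passing to $H^{2d}$, and invoking the period quasi-isomorphism of Proposition \ref{propositionperiodisomorphism}. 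The key point is that the target is rigid: since $X$ is geometrically connected, geometric Poincar\'e duality for pro-\'etale cohomology (which follows from \cite{geometricdual}; see also \cite{achinger2025compactlysupportedpadicproetale}) identifies $\tr(2d)$ with an isomorphism $H^{2d}_{\proet,c}(X_C,\bQ_p(2d))\stackrel{\simeq}{\longrightarrow}\bQ_p(d)$, using $H^0_{\proet}(X_C,\bQ_p)=\bQ_p$. Twisting, $H^{2d}_{\proet,c}(X_C,\bQ_p(r))\cong\bQ_p(r-d)$ as condensed $G_K$-modules, so that
\[\Hom_{D(\bQ_{p,\square}[G_K])}\bigl(H^{2d}_{\proet,c}(X_C,\bQ_p(r)),\,\bQ_p(r-d)\bigr)\cong\bQ_p\]
is one-dimensional. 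Hence $\tr(r)\neq 0$, and it remains only to check $\tr^{FF}(r)\neq 0$.

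For $r=2d$ one has $\tr^{FF}(2d)=\tr(2d)\neq 0$ by construction, the syntomic trace of \cite[Sec. 5.2]{geometricdual} being designed precisely to lift the geometric trace of \cite[Sec. 7.3]{achinger2025compactlysupportedpadicproetale}. For general $r\geq 2d$ I would reduce to this case via a ``change of weight'' morphism: multiplication by $t\in B$ — which is annihilated by $N$ and satisfies $\varphi(t)=pt$, $g(t)=\chi(g)t$ — induces a natural $G_K$-equivariant morphism $\cE_{\syn,c}(X_C,\bQ_p(2d))\otimes\bQ_p(r-2d)\to\cE_{\syn,c}(X_C,\bQ_p(r))$ in $D\nuc(B)^{\varphi,G_K}$, lifting the evident change-of-weight maps on the Hyodo--Kato and filtered de Rham sheaves. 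Since $\tr^B_{\syn}(-)$ is assembled in \cite[Sec. 5.2]{geometricdual} from the Hyodo--Kato and Serre-duality traces, each manifestly compatible with change of weight, it is compatible with these morphisms; applying $H^{2d}R\Gamma(\FF,-)$ and using that the change-of-weight morphism becomes, under Proposition \ref{propositionperiodisomorphism}, the canonical identification $H^{2d}_{\proet,c}(X_C,\bQ_p(2d))\otimes\bQ_p(r-2d)\cong H^{2d}_{\proet,c}(X_C,\bQ_p(r))$, one gets $\tr^{FF}(r)=\tr^{FF}(2d)\otimes\bQ_p(r-2d)$ up to a nonzero scalar, hence $\tr^{FF}(r)\neq 0$, and the lemma follows. (Alternatively, $\tr^{FF}(r)\neq 0$ can be read off directly from Theorem \ref{theoremgeometric}: $\tr^B_{\syn}(r)$ enters the pairing there, which is a quasi-isomorphism between complexes with nonzero cohomology, so the trace cannot vanish.)

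The step I expect to be the main obstacle is this last one: carefully matching the normalizations of the geometric trace of \cite{achinger2025compactlysupportedpadicproetale} with the syntomic trace of \cite{geometricdual}, and verifying that ``multiplication by $t$'' on the syntomic sheaves corresponds, under the period isomorphism, to the change-of-weight maps on pro-\'etale cohomology — a compatibility that is standard in spirit but must be tracked through the various period rings and the global-sections functor on $\FF$. Granting the $r=2d$ identification, the rest is bookkeeping with Tate twists; and if one prefers to avoid even that, the one-dimensionality argument of the first paragraph reduces the whole statement to the single assertion $\tr^{FF}(r)\neq 0$, which may be cited from \cite{geometricdual}.
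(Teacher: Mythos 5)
Your primary route is genuinely different from the paper's, and the reduction itself is sound: since $X_C$ is connected, top-degree geometric Poincar\'e duality gives $H^{2d}_{\proet,c}(X_C,\bQ_p(r))\cong \bQ_p(r-d)$ as $G_K$-modules, so the space of $G_K$-equivariant maps to $\bQ_p(r-d)$ is one-dimensional and any two nonzero such maps are proportional. The paper argues dually: both traces surject onto the one-dimensional target, so it suffices to match their kernels. What your reduction buys is that you never need to compute the kernels; what it costs is that you must still prove $\tr^{FF}(r)\neq 0$, and this is exactly where the content of the lemma lives.

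Neither of your two arguments for $\tr^{FF}(r)\neq 0$ is complete. The parenthetical shortcut does not work as stated: Theorem \ref{theoremgeometric} shows that the sheaf-level trace $\tr^{B}_{\syn}(r)$ is a nonzero morphism in $D\nuc(B)^{\varphi,G_K}$, but a nonzero morphism of complexes of sheaves on $\FF$ can perfectly well induce the zero map on $H^{2d}$ of global sections, which is what $\tr^{FF}(r)$ is; so nonvanishing does not simply "read off". Your change-of-weight argument is the right one, but the compatibility you call "manifest" is precisely what the paper proves, and it is not formal: the paper introduces the map $\phi: H^{2d}_{\syn,c}(X_C,2d)\otimes\bQ_p(r-2d)\to H^{2d}_{\syn,c}(X_C,r)$ obtained by transporting the Tate-twist identification through the period isomorphisms, cites \cite{colmez2024cohomologypadicanalyticspaces} for the identity $\phi(x\otimes\epsilon^{r-2d})=xt^{r-2d}$, and then verifies the intertwining of the two traces by an explicit diagram chase: using the exact sequence of \cite[Lemma 8.1]{achinger2025compactlysupportedpadicproetale} it identifies $\ker(\tr(r))$ with $\coker(\iota_{\HK,c}(2d))\otimes\bQ_p(r-2d)$ and $\ker(\tr^{FF}(r))$ with $\coker(\iota_{\HK,c}(r))$, and checks that multiplication by $t^{r-2d}$ carries one onto the other. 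To make your proof complete you would need to supply this computation (or an equivalent one tracking the normalizations of the Hyodo--Kato and de Rham components of $\tr^{B}_{\syn}$ under multiplication by $t$); as written, the step you yourself flag as "the main obstacle" is asserted rather than proved.
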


    \begin{proof}
        Let $\epsilon$ be the generator of $\bZ_p(1)$, $\phi$ be the composition of the isomorphisms
     \begin{equation*}
        \begin{tikzcd}
        H^{2d}_{\syn,c}(X_C,2d)\otimes \bQ_p(r-2d) \arrow[d, "\beta_{\syn}"] \arrow[r, dashed, "\phi"] & H^{2d}_{\syn,c}(X_C,r) \\
        H^{2d}_{\proet,c}(X_C,\bQ_p(2d))\otimes \bQ_p(r-2d) \arrow[r, "\cong"] & H^{2d}_{\proet,c}(X_C,\bQ_p(r)) \arrow[u, "\beta_{\syn}^{-1}"]
      \end{tikzcd}
     \end{equation*} 
Then $\phi(x\otimes \epsilon^{r-2d})=xt^{r-2d}$ (cf. \cite[Proof of Theorem 6.4]{colmez2024cohomologypadicanalyticspaces}). To see it is compatible with the trace map, it suffices to show $\phi$ induces an isomorphism between the kernel of $tr(r)$ and $tr^{FF}(r)$. We have the following commutative diagram with exact rows
$$\begin{CD}
    0@>>> H^{2d-1}(HK_c(X_C,2d))@>t^{r-2d}>> H^{2d-1}(HK_c(X_C,r))@>>> H^{2d-1}_{\dR,c}\otimes^{\square}_K B^+_{\dR}/F^{r-2d}@>>> 0\\
    @. @V\iota_{\HK,c}(2d)VV @VV\iota_{\HK,c}(r)V @VV=V @.\\
    0@>>> H^{2d-1}(DR_c(X_C,2d))@>t^{r-2d}>> H^{2d-1}(DR_c(X_C,r))@>>> H^{2d-1}_{\dR,c}\otimes^{\square}_K B^+_{\dR}/F^{r-2d}@>>> 0\\
    @. @VVV @VVV \\
    @. H^{2d}_{\syn,c}(X_C,\bQ_p(2d)) @>t^{r-2d}>> H^{2d}_{\syn,c}(X_C,\bQ_p(r))
\end{CD}$$
where the first row is exact by taking $j=r,k=2d$ in \cite[Lemma 8.1]{achinger2025compactlysupportedpadicproetale}. The claim follows because $\ker(tr(r))\cong\coker(\iota_{\HK,c}(2d))\otimes \bQ_p(r-2d)$, $\ker(tr^{FF}(r)) \cong \coker(\iota_{\HK,c}(r))$ and it is clear the maps coincide with the one induced by $\phi$.
    \end{proof}

\begin{corollary}
	Let $X$ be a geometrically connected smooth partially proper rigid analytic variety over $K$ of dimension $d$. Then \eqref{formuladesired1} is a quasi-isomorphism in $D(\bQ_{p,\square})$
	$$\gamma_{\proet}: R\Gamma_{\proet}(X,\bQ_p(j))\stackrel{\simeq}\longrightarrow \bD_{\bQ_p}(R\Gamma_{\proet,c}(X,\bQ_p(d+1-j))[2d+2]).$$
\end{corollary}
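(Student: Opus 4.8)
The plan is to descend the preceding Theorem from Stein spaces to partially proper spaces along an admissible covering, using that both sides of \eqref{formuladesired1} are (co)sheaves on the admissible site of $X$ and that $\gamma_{\proet}$ is natural for open immersions.

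First I would record the (co)descent. Pro-\'etale cohomology $U\mapsto R\Gamma_{\proet}(U,\bQ_p(j))$ is a sheaf of objects of $D(\bQ_{p,\square})$ on the admissible site of $X$, so for an admissible covering $\mathfrak U=\{X_i\}_{i\in I}$ it is the homotopy limit of the \v{C}ech cosimplicial object with $k$-th term $\prod R\Gamma_{\proet}(X_{i_0\cdots i_k},\bQ_p(j))$, where $X_{i_0\cdots i_k}:=X_{i_0}\cap\cdots\cap X_{i_k}$. Dually, $R\Gamma_{\proet,c}$ is covariant for open immersions and, from its definition \eqref{formuladefofcompactlysupport} together with the excision sequences for compactly supported cohomology, satisfies Mayer--Vietoris; iterating, $R\Gamma_{\proet,c}(X,\bQ_p(j'))$ is the homotopy colimit of the \v{C}ech simplicial object with $k$-th term $\bigoplus R\Gamma_{\proet,c}(X_{i_0\cdots i_k},\bQ_p(j'))$. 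As $\bD_{\bQ_p}$ sends homotopy colimits to homotopy limits, $\bD_{\bQ_p}(R\Gamma_{\proet,c}(X,\bQ_p(d+1-j))[2d+2])$ is the homotopy limit of the cosimplicial object with $k$-th term $\prod\bD_{\bQ_p}(R\Gamma_{\proet,c}(X_{i_0\cdots i_k},\bQ_p(d+1-j))[2d+2])$.

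Next I would choose $\mathfrak U$ so that every $X_{i_0\cdots i_k}$ is Stein: since $X$ is smooth it has an admissible covering by Stein (e.g.\ polydisc) opens, which after refinement may be taken stable under finite intersection, so that all $X_{i_0\cdots i_k}$ are Stein (for $X$ algebraic one takes the analytification of an affine open cover, finite intersections of affines in a separated scheme being affine hence Stein; the general case follows from the canonical dagger structure on $X$ and the comparison already used in Corollary~\ref{corollarytopologydagger}). Since $X$ is geometrically connected and smooth it is of pure dimension $d$, hence so is each nonempty $X_{i_0\cdots i_k}$ and each of its connected components; applying the preceding Theorem to each component --- using Shapiro's lemma to reduce to the geometrically connected case when a component is not geometrically connected --- yields for every multi-index a quasi-isomorphism
\[
\gamma_{\proet}\colon R\Gamma_{\proet}(X_{i_0\cdots i_k},\bQ_p(j))\ \xrightarrow{\ \simeq\ }\ \bD_{\bQ_p}\bigl(R\Gamma_{\proet,c}(X_{i_0\cdots i_k},\bQ_p(d+1-j))[2d+2]\bigr).
\]

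Finally I would check that these fit into a morphism of \v{C}ech (co)simplicial objects, i.e.\ that $\gamma_{\proet}$ is natural with respect to the open immersions between the $X_{i_0\cdots i_k}$. This reduces to the functoriality for open immersions of Stein spaces of the trace \eqref{formulatrace} and the pairing \eqref{pairingforarithmetic}; granting this, the chain of quasi-isomorphisms \eqref{lastcomputation} is compatible with restriction, all of its ingredients --- the syntomic, Hyodo--Kato and de Rham sheaves $\cE_{\syn,*}$, $\cE_{\HK,*}$, $\cE_{\dR,*}$ on the Fargues--Fontaine curve, the Hyodo--Kato morphism, the cup products, the trace $\tr^B_{\syn}$, Theorem~\ref{theoremgeometric} and the local Tate duality of Theorem~\ref{theoremlocaltatesheaf1} --- being functorial in $X$. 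Then $\gamma_{\proet}$ is a morphism of homotopy limits that is a termwise quasi-isomorphism, hence a quasi-isomorphism, which is \eqref{formuladesired1} for $X$. The main obstacle is this last point: propagating the functoriality of the entire Fargues--Fontaine-curve construction (and of the geometric and Galois trace maps) through open restrictions, so that the Stein-case duality glues; by contrast the existence of a suitable covering and the convergence of the \v{C}ech spectral sequences are formal, since everything in sight is a homotopy (co)limit in the stable $\infty$-category $D(\bQ_{p,\square})$.
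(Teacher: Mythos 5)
Your proposal is correct and follows essentially the same route as the paper: cover $X$ by Stein opens (with intersections Stein thanks to separatedness), use descent for $R\Gamma_{\proet}$ and codescent for $R\Gamma_{\proet,c}$, note that $\bD_{\bQ_p}$ turns the homotopy colimit into a homotopy limit, and conclude from the naturality of the Stein-case duality $\gamma_{\proet}$ under open immersions, which the paper likewise justifies by the functoriality of the comparison theorems and of Theorems \ref{theoremgeometric} and \ref{theoremlocaltatesheaf1}. The only cosmetic difference is that you refine the cover to be intersection-stable and invoke Shapiro's lemma for non-geometrically-connected components, whereas the paper simply uses that in a separated space an intersection of two Stein opens is Stein.
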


\begin{proof}
	Since $X$ is partially proper (it is separated and countable at infinity by our convention), it admits an admissible covering $\{Y_i\}_{i\in \bN}$ by smooth Stein spaces $Y_i$ and we write $Y=\coprod_{i} Y_i$. Since $X$ is separated, an intersection of two Stein spaces in $X$ is still a Stein space. By analytic (co-)descent of (compactly supported) pro-\'etale cohomology (cf. \cite[Sec. 2.2.4]{achinger2025compactlysupportedpadicproetale}), we have equivalences in $D(\bQ_{p,\square})$
	$$R\Gamma_{\proet}(X,\bQ_p(j))\simeq  R\lim_{[n]\in \Delta} R\Gamma_{\proet}(Y^{n/X},\bQ_p(j))$$
	$$R\Gamma_{\proet,c}(X,\bQ_p(j))\simeq  L\colim_{[n]\in \Delta} R\Gamma_{\proet,c}(Y^{n/X},\bQ_p(j)).$$

	 The quasi-isomorphism \eqref{conjectureformula1} is natural in open inclusions of Stein spaces (because comparison theorems, Theorem \ref{theoremgeometric} and Theorem \ref{theoremlocaltatesheaf1} are functorial in this case). Thus
	 $$\begin{aligned}
	&\bD_{\bQ_p}(R\Gamma_{\proet,c}(X,\bQ_p(d+1-j))[2d+2])\\
	& \simeq \bD_{\bQ_p}(L\colim_{[n]\in \Delta} R\Gamma_{\proet,c}(Y^{n/X},\bQ_p(d+1-j))[2d+2]) \\
	&\simeq R\lim_{[n]\in \Delta} \bD_{\bQ_p}(R\Gamma_{\proet,c}(Y^{n/X},\bQ_p(d+1-j))[2d+2])\\
	&\simeq R\lim_{[n]\in \Delta}  R\Gamma_{\proet}(Y^{n/X},\bQ_p(j))\simeq R\Gamma_{\proet}(X,\bQ_p(j)),
\end{aligned}$$
as wanted.

\end{proof}

We have an analogue theorem for smooth dagger affinoids.

\begin{corollary}
	Let $X$ be a geometrically connected, smooth dagger affinoid variety over $K$ of dimension $d$. Then we have isomorphisms of solid $\bQ_p$-vector spaces
	\begin{equation}\label{formulacorollary1}
	H^i_{\proet}(X,\bQ_p(j))\cong H^{2d+2-i}_{\proet,c}(X,\bQ_p(d+1-j))^*	,
	\end{equation}
	\begin{equation}\label{formulacorollary2}
	H^i_{\proet,c}(X,\bQ_p(j))\cong H^{2d+2-i}_{\proet}(X,\bQ_p(d+1-j))^*.
	\end{equation}
\end{corollary}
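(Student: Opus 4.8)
The plan is to bootstrap the Stein case, established above, to a dagger affinoid via a dagger presentation. Fix a dagger presentation $\{X_n\}_{n\in\bN}$ of $X$ by rigid affinoids together with naive interiors $X_n^0$ adapted to it, so that $X_{n+1}^0\subseteq X_n^0$ and each $X_n^0$ is a smooth geometrically connected Stein space of dimension $d$ (shrinking the presentation if necessary), as in Corollary \ref{corollarytopologydagger}. That corollary gives, for all $i,j$, isomorphisms of solid $\bQ_p$-vector spaces
\[
H^i_{\proet}(X,\bQ_p(j))\;\cong\;\colim_n H^i_{\proet}(X_n^0,\bQ_p(j)),\qquad
H^i_{\proet,c}(X,\bQ_p(j))\;\cong\;\lim_n H^i_{\proet,c}(X_n^0,\bQ_p(j)),
\]
where the colimit is taken along the restriction maps (an inductive system of nuclear Fr\'echet spaces with compact injective transitions, cf. Remark \ref{remarkcompactoffrechet}) and the limit along the extension-by-zero maps (a projective system of spaces of compact type which is pro-isomorphic to a system of Banach spaces with dense transitions, whence a nuclear Fr\'echet limit).

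Next I would feed in the Stein duality. Since each $X_n^0$ is a smooth geometrically connected Stein space of dimension $d$, the theorem for Stein spaces just established yields isomorphisms of solid $\bQ_p$-vector spaces
\[
H^i_{\proet}(X_n^0,\bQ_p(j))\;\cong\;H^{2d+2-i}_{\proet,c}(X_n^0,\bQ_p(d+1-j))^*,\qquad
H^i_{\proet,c}(X_n^0,\bQ_p(j))\;\cong\;H^{2d+2-i}_{\proet}(X_n^0,\bQ_p(d+1-j))^*,
\]
and, as already observed in the proof of the preceding corollary, the underlying quasi-isomorphism $\gamma_{\proet}$ is natural in open inclusions of Stein spaces (the syntomic comparison of Proposition \ref{propositionperiodisomorphism}, Theorem \ref{theoremgeometric}, and Theorem \ref{theoremlocaltatesheaf1} being functorial for such). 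Hence for $X_{n+1}^0\subseteq X_n^0$ these isomorphisms intertwine the restriction map on $H^i_{\proet}$ with the $\bQ_p$-linear dual of the extension-by-zero map on $H^{2d+2-i}_{\proet,c}(-,\bQ_p(d+1-j))$, and symmetrically; equivalently, applying $(-)^*$ termwise turns the projective (resp. inductive) system of Stein cohomologies of the $X_n^0$ into the inductive (resp. projective) system appearing in the display above.

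It then remains to commute $(-)^*$ past these two countable (co)limits in the solid category. For \eqref{formulacorollary1}, the projective system $\{H^{2d+2-i}_{\proet,c}(X_n^0,\bQ_p(d+1-j))\}_n$ is pro-isomorphic to a system of Banach spaces with dense transition maps, so the strong dual of its limit $H^{2d+2-i}_{\proet,c}(X,\bQ_p(d+1-j))$ is the inductive limit of the strong duals (cf. the proof of Corollary \ref{corollarytopologydagger} and Lemma \ref{lemmasametopologyofdual}), giving
\[
H^i_{\proet}(X,\bQ_p(j))\;\cong\;\colim_n H^{2d+2-i}_{\proet,c}(X_n^0,\bQ_p(d+1-j))^*\;\cong\;\Bigl(\lim_n H^{2d+2-i}_{\proet,c}(X_n^0,\bQ_p(d+1-j))\Bigr)^*\;\cong\;H^{2d+2-i}_{\proet,c}(X,\bQ_p(d+1-j))^*.
\]
For \eqref{formulacorollary2}, the space $H^{2d+2-i}_{\proet}(X,\bQ_p(d+1-j))$ is of compact type, presented as the colimit of the nuclear Fr\'echet spaces $H^{2d+2-i}_{\proet}(X_n^0,\bQ_p(d+1-j))$ along compact injective maps, and for spaces of compact type one has $V^*\cong\lim_n V_n^*$ (recalled in Section 2), so
\[
H^i_{\proet,c}(X,\bQ_p(j))\;\cong\;\lim_n H^{2d+2-i}_{\proet}(X_n^0,\bQ_p(d+1-j))^*\;\cong\;\Bigl(\colim_n H^{2d+2-i}_{\proet}(X_n^0,\bQ_p(d+1-j))\Bigr)^*\;\cong\;H^{2d+2-i}_{\proet}(X,\bQ_p(d+1-j))^*.
\]
The substantive content is entirely in the Stein case and its naturality in open inclusions; the only point to watch is this last commutation, i.e. the standard (but not purely formal) fact that, in the solid category, $(-)^*$ interchanges the projective limit of the compactly supported cohomologies of the $X_n^0$ with the inductive limit of their ordinary cohomologies, and conversely. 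I expect this to be the main obstacle, though it is a mild one given the functional-analytic facts about nuclear Fr\'echet spaces and spaces of compact type recalled earlier.
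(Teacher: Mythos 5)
Your proposal is correct and follows essentially the same route as the paper: reduce to the Stein case via the (co)limit descriptions of Corollary \ref{corollarytopologydagger}, apply the Stein duality termwise, and commute $(-)^*$ past the countable (co)limit. The only divergence is that the paper proves \eqref{formulacorollary2} directly (using $V^*\cong\lim_n V_n^*$ for $V$ of compact type) and then deduces \eqref{formulacorollary1} by reflexivity of both sides, whereas you also prove \eqref{formulacorollary1} directly via the dual fact that the strong dual of the projective limit of the compactly supported cohomologies is the inductive limit of the duals --- correct under the compactness of the transition maps, but the reflexivity shortcut avoids having to invoke it.
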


\begin{proof}
	Assume $\{X_n\}$ is a dagger presentation of $X$ and $\{X_n^0\}_n$ are adapted naive interiors of $\{X_n\}_n$. Therefore, by Corollary \ref{corollarytopologydagger} and formula \eqref{conjectureformula3}, we have isomorphisms
	$$\begin{aligned}
	H^i_{\proet,c}(X,\bQ_p(j))&\cong \lim_n H^i_{\proet,c}(X_n^0,\bQ_p(j)) \\
	&\cong \lim_n H^{2d+2-i}_{\proet}(X_n^0,\bQ_p(d+1-j))^*\\
	&\cong (\mathrm{colim}_n H^{2d+2-i}_{\proet}(X_n^0,\bQ_p(d+1-j)))^*\\
	&\cong H^{2d+2-i}_{\proet}(X,\bQ_p(d+1-j))^*.
\end{aligned}$$

Formula \eqref{formulacorollary1} follows from \eqref{formulacorollary2} by changing $i$ to $2d+2-i$ and $j$ to $d+1-j$ and using the fact that $H^i_{\proet}(X,\bQ_p(j))$ and $H^{2d+2-i}_{\proet,c}(X,\bQ_p(d+1-j))$ are reflexive (Corollary \ref{corollarytopologydagger}).

\end{proof}

\bibliographystyle{plain}
\bibliography{Arithmetic_duality.bib}

\Addresses

\end{document}